\numberwithin{equation}{section}
\renewcommand{\d}{\,\mathrm{d}}
\newcommand{\un}{\ensuremath{\mathbbm{1}}}
\newcommand\N{\ensuremath{\mathbb{N}}}
\newcommand\R{\ensuremath{\mathbb{R}}}
\newcommand\Z{\ensuremath{\mathbb{Z}}}
\newcommand\T{\ensuremath{\mathbb{T}}}
\newcommand\C{\ensuremath{\mathbb{C}}}
\newcommand\qc{\ensuremath{\mathcal{Q}}}
\newcommand\lc{\ensuremath{\mathcal{L}}}
\newcommand\fc{\ensuremath{\mathcal{F}}}
\newcommand\dst{\displaystyle}
\newcommand\lb{\ensuremath{{\displaystyle(}}}
\newcommand\rb{\ensuremath{{\displaystyle)}}}
\newtheorem{theorem}{Theorem}[section]
\newtheorem{lemma}[theorem]{Lemma}
\newtheorem{proposition}[theorem]{Proposition}
\newtheorem{corollary}[theorem]{Corollary}
\numberwithin{equation}{section}
\newenvironment{taggedtheorem}[1]
 {\taggedtheoremx}
 {\endtaggedtheoremx}
 \theoremstyle{definition}
 \newtheorem{remark}[theorem]{Remark}
 \newtheorem{definition}[theorem]{Definition}
 \newtheorem{notation}[theorem]{Notation}
\newcommand{\norm}[1]{{\left\|{#1}\right\|}}
\newcommand{\ent}[1]{{\left[{#1}\right]}}
\newcommand{\abs}[1]{{\left|{#1}\right|}}
\newcommand{\scal}[1]{{\left\langle{#1}\right\rangle}}
\begin{document}
\title{Null-controllability of the Generalized Baouendi-Grushin heat like equations}

\author[P. Jaming]{Philippe Jaming}
\address{Univ. Bordeaux, CNRS, Bordeaux INP, IMB, UMR 5251, F-33400 Talence, France}
\email{philippe.jaming@math.u-bordeaux.fr, yunlei.wang@math.u-bordeaux.fr}

\author[Y. Wang]{Yunlei Wang}
%\address{Univ. Bordeaux, CNRS, Bordeaux INP, IMB, UMR 5251, F-33400 Talence, France}

\begin{abstract}
	In this article, we prove null-controllability results for the heat equation associated to
	fractional Baouendi-Grushin operators 
$$
\partial_t u+\bigl(-\Delta_x-V(x)\Delta_y\bigr)^s u=\un_\Omega h
$$	
where $V$ is a potential that satisfies some power growth conditions and the set $\Omega$
is thick in some sense. This extends previously known results for potentials $V(x)=|x|^{2k}$.

To do so, we study Zhu-Zhuge's spectral inequality for Schrödinger operators with power growth potentials, and give a precised quantitative form of it.
\end{abstract}

\maketitle

\tableofcontents

\section{Introduction}

The aim of this paper is to prove null-controllability and observability  from equidistributed subsets of $\R^d$
for heat equations associated to Baoeundi-Grushin type
operators 
$$
\lc_V(x,y)=-\Delta_x-V(x)\Delta_y,\qquad x\in\R^n\mbox{ and }y\in\R^m\mbox{ or }\T^m
$$
where the potential $V$ has a power growth $c_1|x|^{\beta_1}\leq V(x)\leq c_2|x|^{\beta_2}$ and an upper bound on the 
gradiant (for precise assumptions on $V$, {\it see} Assumptions \ref{A1}-\ref{A2} below). 
Let us now make this more precise.

\subsection{Null-controllability, observability and spectral inequalities}

First recall that null-control\-la\-bility is defined as follows:

\begin{definition} [Null-controllability] Let $P$ be a closed operator on $L^2(\R^d)$ which is the infinitesimal generator of a strongly continuous semigroup $(e^{-tP})_{t \geq 0}$ on $L^2(\R^d)$, $T>0$ and $\Omega$ be a measurable subset of $\mathbb{R}^d$. 
The equation 
\begin{equation}\label{syst_general}
\left\lbrace \begin{array}{ll}
(\partial_t + P)u(t,x)=h(t,x)\un_{\Omega}(x), \quad &  x \in \mathbb{R}^d,\ t>0, \\
u|_{t=0}=u_0 \in L^2(\R^d),                                       &  
\end{array}\right.
\end{equation}
is said to be {\em null-controllable from the set $\Omega$ in time} $T>0$ if, for any initial datum $u_0 \in L^{2}(\mathbb{R}^d)$, there exists $h \in L^2((0,T)\times\mathbb{R}^d)$, supported in $(0,T)\times\Omega$, such that the mild (or semigroup) solution of \eqref{syst_general} satisfies $u|_{t=T}=0$.
\end{definition}

\medskip

By the Hilbert Uniqueness Method, {\it see} \cite[Theorem~2.44]{coron_book} or \cite{JLL_book,TW_book}, the null controllability of the equation \eqref{syst_general} is equivalent to the (final state) observability of the adjoint system 
\begin{equation} \label{adj_general}
\left\lbrace \begin{array}{ll}
(\partial_t + P^*)v(t,x)=0, \quad & x \in \mathbb{R}^d,\ t>0, \\
v|_{t=0}=v_0 \in L^2(\R^n),
\end{array}\right.
\end{equation}
where $P^*$ stands for the $L^2(\R^d)$-adjoint of $P$.
This notion of observability is defined as follows:

\medskip

\begin{definition} [Observability] Let $T>0$ and $\Omega$ be a measurable subset of $\mathbb{R}^d$. 
Equation \eqref{adj_general} is said to be {\em observable from the set $\Omega$ in time} $T>0$ if there exists a positive constant $C_T>0$ such that,
for any initial datum $v_0 \in L^{2}(\mathbb{R}^d)$, the mild (or semigroup) solution of \eqref{adj_general} satisfies
\begin{equation}\label{eq:observability}
\int\limits_{\mathbb{R}^d} |v(T,x)|^{2}\,\mathrm{d}x 
 \leq C_T \int\limits_{0}^{T} \Big(\int\limits_{\Omega} |v(t,x)|^{2} \,\mathrm{d}x\Big) \,\mathrm{d}t.
\end{equation}
\end{definition}

\medskip

Null-controllability/observability problems of heat equations associated to elliptic operators on domains
is a an old and vast subject ({\it see} \cite{coron_book,JLL_book,TW_book} and references therein). More recently,
following the seminal work by K. Beauchard, P. Cannarsa and R. Guglielmi \cite{cannarsa2013null},
research about null-controllability properties of heat equations associated to degenerate
hypo-elliptic operators of the Baouendi-Grushin type have attracted a lot of attention ({\it see e.g.}
\cite{FMbook,koenig2017non,duprez2020control,darde2022null,beauchard20152d,beauchard2020minimal,allonsius2021analysis,lissy2022non}).

In another direction, there has been a lot of recent activity on
null-controllability/observability problems for
operators $P$ defined on the whole space $\R^d$ and sets $\omega$ satisfying various thickness conditions.
The first result in this direction concerns the heat equation on $\R^d$ ($P=\Delta$), and observability
from so-called thick sets. To define this notion, let us introduce the following notation:
we denote by $\mathcal{B}_r(z)$ the Euclidean ball with radius $r$ centered on $z$ and 
$\mathcal{B}_r=\mathcal{B}_r(0)$. We write $\mathcal{Q}_r=[-r/2,r/2]^d$ and $\mathcal{Q}_r(z)=z+\mathcal{Q}_r$ denotes the cube with side length $r$ centered on $z$.
The Lebesgue measure of a set $E\subset\R^d$ will be denoted by $|E|$. 

\begin{definition}
Let $\gamma \in (0,1)$ and $\ell>0$. A set $\omega\subset\R^d$ is said to be $\gamma$-thick (at scale $\ell>0$) if, for every $z\in\R^d$,
$$
|\omega\cap(z+\mathcal{Q}_\ell)|\geq\gamma|\mathcal{Q}_\ell|.
$$
We will say that $\omega$ is thick, if it is $\gamma$-thick at scale $\ell$ for some $\gamma\in(0,1)$ and $\ell>0$.
\end{definition}

It is easy to check that the notion of thickness does not depend on the norm used to define it, in particular
cubes may be replaced by balls. In other words, $\omega$ is thick 
if and only if there is a $\gamma'\in(0,1)$ and an $\ell'>0$
such that, for every $z\in\R^d$, $|\omega\cap(z+\mathcal{B}_{\ell'})|\geq\gamma'|\mathcal{B}_{\ell'}|$.

It was then proven in \cite{egidi2018sharp} and independently in  \cite{wang2019observable}
that the heat equation on $\R^d$ is controllable/observable from a set $\omega$ if and only if $\omega$
is thick:

\begin{theorem}[Egidi \& Veseli\'c \cite{egidi2018sharp}, Wang, Wang, Zhang \& Zhang \cite{wang2019observable}]
\label{th:WWZZEV}
Let $\omega\subset\R^d$ and $T>0$. Then the following are equivalent:
\begin{enumerate}
\renewcommand{\theenumi}{\roman{enumi}}
\item $\Omega$ is thick;

\item {\it Observability:} there exists a constant $C>0$ such that, for every $f\in L^2(\R^d)$,
$$
\norm{e^{-T\Delta}f}_{L^2(\R^d)}^2\leq C\int_0^T\norm{e^{-t\Delta}f}_{L^2(\Omega)}^2\,\mathrm{d}t.;
$$

\item {\it Controllability:} For every $u_0\in L^2(\R^d)$ there exists $h\in L^2\bigl((0,T)\times\Omega\bigr)$
such that the solution of the heat equation $\partial_t u-\Delta_xu=h(t,x)\un_\Omega(x)$
with initial condition $u(0,x)=u_0(x)$ satisfies $u(T,x)=0$.
\end{enumerate}
\end{theorem}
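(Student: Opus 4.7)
The plan is to handle (ii) $\Leftrightarrow$ (iii) first as a black box, and then focus on the equivalence (i) $\Leftrightarrow$ (ii), which is the real content. The implication (ii) $\Rightarrow$ (iii) follows from the Hilbert Uniqueness Method already cited in the introduction (since $-\Delta$ is self-adjoint, the adjoint problem coincides with the direct one), so there is nothing new to do there.

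For the implication (i) $\Rightarrow$ (ii), I would follow the Lebeau–Robbiano strategy. The two ingredients are: (a) a \emph{spectral inequality} of Logvinenko–Sereda type saying that if $f \in L^2(\R^d)$ has Fourier transform supported in $\mathcal{B}_N$ and $\omega$ is $\gamma$-thick at scale $\ell$, then
\[
\norm{f}_{L^2(\R^d)} \leq C_1 \left(\frac{C_2}{\gamma}\right)^{C_3(\ell N + 1)} \norm{f}_{L^2(\omega)};
\]
and (b) the \emph{dissipation estimate} $\norm{e^{-t\Delta}(\mathbbm{1}_{|\xi|\geq N}\hat f)^{\vee}}_{L^2} \leq e^{-tN^2}\norm{f}_{L^2}$. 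With these in hand, I would partition $(0,T)$ dyadically, apply the spectral inequality at scale $N_k$ on each subinterval to the low-frequency part $\Pi_{N_k} v(t)$, use the heat dissipation to control the high-frequency tail, and then sum a telescoping series where the scales $N_k$ are chosen to balance the growth $e^{CN_k}$ of the Logvinenko–Sereda constant against the decay $e^{-tN_k^2}$. This yields \eqref{eq:observability} for the semigroup $e^{-t\Delta}$.

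For the converse (ii) $\Rightarrow$ (i), the natural strategy is to argue by contradiction. If $\omega$ is not thick, then for every $\ell,\gamma>0$ there exists $z_{\ell,\gamma}\in\R^d$ with $|\omega\cap\mathcal{Q}_\ell(z_{\ell,\gamma})|<\gamma|\mathcal{Q}_\ell|$. I would then test the observability inequality with carefully localized initial data, e.g.\ Gaussians $g_{z,\sigma}(x)=\sigma^{-d/2}e^{-|x-z|^2/(2\sigma^2)}$ centered at such bad points $z=z_{\ell,\gamma}$ with a well-chosen scale $\sigma$. Because the heat kernel spreads like $\sqrt{T}$, the solution $e^{-t\Delta}g_{z,\sigma}$ at time $t\in(0,T)$ remains essentially concentrated on a ball of radius $\sim\sqrt{T}+\sigma$ around $z$; taking $\ell$ much larger than this radius, the right-hand side of \eqref{eq:observability} is bounded by $\gamma$ times $\norm{g_{z,\sigma}}_{L^2}^2$ plus an exponentially small tail contribution, while the left-hand side stays comparable to $\norm{g_{z,\sigma}}_{L^2}^2$. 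Sending $\gamma\to 0$ contradicts the uniform constant $C_T$ in \eqref{eq:observability}.

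The hard part of this program is unambiguously step (a), the Logvinenko–Sereda inequality with an explicit exponential dependence on the frequency cutoff $N$. Without the explicit $e^{CN}$ constant, the Lebeau–Robbiano telescoping does not converge; and the fact that the rate is \emph{linear} in $N$ (not $N^2$) is exactly what makes the summation with the heat dissipation $e^{-tN^2}$ work. Once that quantitative spectral inequality is granted, the rest of the argument is a rather mechanical implementation of the Lebeau–Robbiano method, and the necessity direction is a soft Gaussian-testing argument.
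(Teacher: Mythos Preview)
Your sketch is essentially correct, but there is nothing to compare it against: the paper does \emph{not} prove Theorem~\ref{th:WWZZEV}. It is stated in the introduction as a known result, attributed to \cite{egidi2018sharp} and \cite{wang2019observable}, and is only used later as a black box (for the $k=0$ frequency in the semi-periodic Baouendi--Grushin theorem). So the ``paper's own proof'' is simply a citation.

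That said, your outline is faithful to how the cited references establish the result. The sufficiency direction (i) $\Rightarrow$ (ii) in \cite{egidi2018sharp,wang2019observable} indeed rests on Kovrijkine's quantitative Logvinenko--Sereda inequality \cite{kovrijkine2001some} (your step (a)) fed into a Lebeau--Robbiano iteration, exactly as you describe; the paper itself alludes to this when it writes that ``this inequality is a reformulation of Kovrijkine's sharp version of the Logvinenko--Sereda Uncertainty Principle.'' The necessity direction (ii) $\Rightarrow$ (i) via localized Gaussian test data is also the standard argument in those references. The equivalence (ii) $\Leftrightarrow$ (iii) via the Hilbert Uniqueness Method is already recalled in the paper's introduction just before the theorem, so you are right that it is a black box here.
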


Soon after, the case of heat equations associated to
some Schrödinger operators
\[
P=H_V:=-\Delta_x +V(x),\quad  x \in \R^{d},
\] 
started to be investigated. For $V=|x|^2$, $P$ is the harmonic oscillator and observability inequalities from different kinds of $\omega$ were established, {\it see} \cite{beauchard2018null,beauchard2021spectral,egidi2021abstract,martin2022spectral,dicke2023uncertainty}. 
For $V=|x|^{2k}$, null controllability and observability inequalities were established for $P=H_V$ in \cite{alphonse2023quantitative,alphonse2020null,martin2022spectral,miller,martin}.
Very recently, in \cite{alphonse2023quantitative}, P. Alphonse and A. Seelman extended
some of those results  to Baouendi-Grushin operators $P=\lc_{|x|^{2k}}$. Our aim here is to pursue
this line of research by extending the class of potentials $V$.

All those results require various restrictions on $\omega$.
Apart from thick sets that we already defined, we now need to introduce two further classes.
The first one was introduced in control theory in \cite{rojas2013scale} and consists
of a particular class of thick sets and are those one would naturally consider if
one wants to construct such sets:

\begin{definition}
Let $\gamma \in (0,1)$ and $0\leq\sigma <1$. 
\begin{enumerate}
\renewcommand{\theenumi}{\roman{enumi}}

\item A set $\omega\subset\R^d$ is said to be {\em $\gamma$-equidistributed} or simply {\em equidistributed }
if there exists a set of points $\{z_k\}_{k\in \Z^{d}}\subset\R^d$ such that,
for every $k$
$$
\omega \cap (k+\mathcal{Q}_1)\supset \mathcal{B}_{\gamma}(z_k).
$$

Since \cite{rojas2013scale}, a larger class has appeared:

\item $\omega$ is {\em $(\gamma,\sigma)$-distributed} if there exists a set of points $\{z_k\}_{k\in \Z^{d}}\subset\R^d$ such that,
for every $k\in\Z^d$,
	\begin{equation}\label{1.3a}
	z_k\in k+\mathcal{Q}_1\quad\mbox{and}\quad
\omega \cap (k+\mathcal{Q}_1)\supset \mathcal{B}_{\gamma^{1+|k|^{\sigma }}}(z_k).
	\end{equation}
\end{enumerate}
Note that a $(\gamma,0)$-distributed set is $\sqrt{\gamma}$-equidistributed.
\end{definition}

Next, thanks to the famous Lebeau-Robbiano method introduced in \cite{lebeau1995controle} ({\it see also} \cite{tenenbaum2011null,beauchard2018null,nakic2020sharp,gallaun2020sufficient}),
the mentioned results are based on some form of Spectral Inequality. To be more precise, recall
that a spectral inequality for a nonnegative selfadjoint operator $P$ in $L^2(\R^{d})$ takes the form
\begin{equation}\label{1a}
\|\phi\|_{L^2(\R^{d})}^2\le \kappa_0 e^{\kappa_1 \lambda^{ \zeta}}\|\phi\|^2_{L^2(\omega)},\quad \forall \phi \in \mathcal{E}_P(\lambda),\,\lambda\ge 0,
\end{equation}
where $\omega$ is a measurable subset of $\R^{d}$, $\mathcal{E}_P(\lambda)$
%=\mathbbm{1}_{(-\infty,\lambda)}(P)L^2(\mathbb{R}^d)$ is the image of the resolution of identity 
are the spectral sets
associated to $P$, and $\kappa_0$, $\kappa_1$, 
$\zeta$ are constants. Such an inequality is a quantitative version of a unique continuation property (i.e., $f=0$ on $\omega$ implies $f=0$ on $\R^{d}$). 

In the case of $P=-\Delta$ being the Laplacian on $\mathbb{R}^d$, this inequality is a reformulation of
Kovrijkine's sharp version of the Logvinenko-Sereda Uncertainty Principle \cite{kovrijkine2001some}
and is valid if (and only if) $\omega$ is $\gamma$-thick for some $\gamma>0$.
The strategy of proof of Kovrijkine is very powerfull and the authors of several of the results mentionned so far have
managed to implement this strategy in different settings. 

Alternative strategies are based on Carleman estimates and allow for potentials $V$ that are less regular at the price
of having slightly more regular sets $\omega$. In this direction, Dicke, Seelmann and Veseli\'{c} \cite{dicke2022spectral} recently considered the Schrödinger operator with power growth potentials and a set $\omega$ that is $(\gamma,\sigma)$-distributed. Precisely speaking, they establish the spectral inequality \eqref{1a} 
when the potential $V\in W_{\mathrm{loc}}^{1,\infty}(\R^{d})$ has suitable power growth.

Shortly after, Zhu and Zhuge \cite{zhu2023spectral} improved the exponent of $\lambda$ in \eqref{1a}
for a slightly larger class of potentials (thus positively answering to the questions asked in \cite{dicke2022spectral}), namely those satisfying the following assumption:

\begin{taggedtheorem}{A}\label{A0} 
$V\in L^1_{\mathrm{loc}}(\R^d)$ is real valued and there are constants $c_1,c_2>0$ and $0<\beta_1\leq\beta_2$
such that
\begin{enumerate}
\renewcommand{\theenumi}{\roman{enumi}}
	\item for every $x\in \R^{d}$,
		\begin{equation}
			c_1 (|x|-1)_+^{\beta_1}\le V(x).\label{1.0}
		\end{equation}
		where $(a)_+ :=\max \left\{a,0\right\} $;
	\item we can write $V=V_1+V_2$ with
		\begin{equation}
			|V_1(x)|+|\nabla V_1(x)|+|V_2(x)|^{\frac{4}{3}}\le c_2(|x|+1)^{\beta_2}
		\end{equation}
		for every $x\in\R^d$.
\end{enumerate}
\end{taggedtheorem}

The main result of Zhu and Zhuge \cite{zhu2023spectral} is then that, when $P=H_V:=-\Delta+V$ with $V$
satisfying assumption \ref{A0} and $\omega$ a $(\gamma,\sigma)$-distributed set,
\begin{equation}
\label{eq:speczz}
\|\phi\|_{L^2(\R^{d})}^2\le \kappa_0 \left(\frac{1}{\gamma}\right)^{\kappa_1 \lambda^{ \zeta}}\|\phi\|^2_{L^2(\omega)},\quad \forall \phi \in \mathcal{E}_P(\lambda),\,\lambda\ge 0,
\end{equation}
where $\zeta=\dfrac{2\sigma+\beta_2}{2\beta_1}$ (thus improving the exponent
$\zeta=\dfrac{3\sigma+2\beta_2}{3\beta_1}$ in \cite{dicke2022spectral}).
Note for future use that \cite{zhu2023spectral,dicke2022spectral}
do not provide estimates of the constants $\kappa_0,\kappa_1$ in \eqref{1a} in terms of $c_1,c_2$ that we will
need here. We will thus revisit their proofs in Theorem \ref{thm1.1} below in order to obtain those estimates.

%However, both results do not give the explicit form of the constant $d_0$ and 
% $d_1$ with respect to $c_1$ and $c_2$, which is necessary for our main results in this article.   

Also, in view of the results by L. Miller \cite{miller} and J. Martin \cite{martin}, one should expect to
be able to improve those results when $\beta_1>2$ by taking much smaller sets $\omega$ (in the case
of the Baouendi-Grushin operator, $\omega$ can be any open set). We do not pursue in this direction
here.

\subsection{Generalized Baouendi-Grushin operator}

Let us now move to the focus of this article. We will here consider observability/controllability properties of
evolution equations
\begin{equation}
	\partial_t u + Pu=0\label{1.5b}
\end{equation}
where $P$ is an operator of Baouendi-Grushin type associated to a real-valued potential $V$,
$$
P=\begin{cases}\lc_V:=-\Delta_x-V(x)\Delta_y&x\in\R^n,\ y\in \R^m \\
\lc_{V,p}:=-\Delta_x-V(x)\Delta_y&x\in\R^n,\ y\in \T^m
\end{cases}
$$
where $\T=\R /2 \pi\Z$ (by abuse of notation, we will write $\lc_V$ for both operators). 

The potential $V$ will satisfy some assumptions which are a bit stronger then Assumption \ref{A0}
that covers both the Grushin operator
\begin{equation*}
	\Delta_G=\lc_{|x|^2}:=-\Delta_x-|x|^{2}\Delta_y
\end{equation*}
and the Baouendi-Grushin operator
\begin{equation*}
	\Delta_k=\lc_{|x|^{2k}}:=-\Delta_x-|x|^{2k}\Delta_y,\qquad k\mbox{ a positive integer}.
\end{equation*}

The first assumption we will consider is satsified in the above case $V(x)=|x|^{2k}$, $k\in\N^*$
and more generally when $V(x)=|x|^\beta$, $\beta>1$
and is defined as follows:

\begin{taggedtheorem}{A1}\label{A1}
$V\in L^{\infty}_{\mathrm{loc}}(\R^{n})$ and there exist positive $c_1,c_2,\beta_1,\beta_2>0$ such that,
for every $x\in\R^n$,
	\begin{equation}
		c_1|x|^{\beta_1}\le V(x) \,\text{ and } \,|V(x)|+|\nabla V(x)|\le c_2|x|^{\beta_2}.
	\end{equation}
\end{taggedtheorem}

Next we notice that $V(x)=|x|^\beta$ does not satisfy this assumption when $0<\beta\leq1$ and
we will replace Assumption \ref{A1} with the following weaker one:

\begin{taggedtheorem}{A2}\label{A2}
$V\in L^{\infty}_{\mathrm{loc}}(\R^{n})$ and there exist positive $c_1,c_2,\beta_1,\beta_2>0$ such that
the following two conditions are satisfied:
\begin{enumerate}
\renewcommand{\theenumi}{\roman{enumi}}
	\item for every $x\in \R^{n}$,
		\begin{equation}
			c_1 |x|^{\beta_1}\le V(x).\label{1.1}
		\end{equation}
	\item We can write $V=V_1+V_2$ such that for every $x\in\R^n$,
		\begin{equation}
			|V_1(x)|+|\nabla V_1(x)|+|V_2(x)|^{\frac{4}{3}}\le c_2(|x|+1)^{\beta_2}.
		\end{equation}
\end{enumerate}
\end{taggedtheorem}

\begin{remark}\label{r1.2}
Notice that, as $c_1>0$, $V=0$ is excluded.

Further, if $V$ satisfies Assumption \ref{A1}, then setting $V_1=V$ and $V_2=0$ we obtain that $V$
also satisfies Assumption \ref{A2}. As expected, our results will be stronger under Assumption \ref{A1}
than under Assumption \ref{A2}.

On the other hand, let us consider the standard case $V=|x|^{\beta}$ with $0<\beta\leq 1$. Take
a smooth cut-off function $\eta$ such that $\eta =1$ in $\mathcal{B}_1$ and $\eta =0$ in 
$\R^{n} \backslash \mathcal{B}_2$ and write $V_1=|x|^{\beta}(1-\eta (x))$ and $V_2(x)=|x|^{\beta}\eta (x)$,
then we see that $V$ satisfies Assumption \ref{A2}. This is the main reason for introduction of Assumption
\ref{A0} in \cite{zhu2023spectral}.

Also it should be noted that in \cite{zhu2023spectral} the original Assumption \ref{A0} is different from Assumption \ref{A2} only in \eqref{1.0}, in which the lower bound is $c_1\left( |x|-1 \right)_+^{\beta_1}$ instead of $c_1|x|^{\beta_1}$ in \eqref{1.1}. 
This will play a key role in order to obtain a lower bound of the first eigenvalue of the Schr\"odinger operator
$-\Delta+ V(x)$ with suitable dependence on the parameter $c_1$
({\it see} Subsection~\ref{subsec3.1} for details).
\end{remark}

Our main result gives conditions for null-controllability 
of the heat equation associated to $\bigl(-\Delta_x-V(x)\Delta_y\bigr)^s$ from sets of the form $\omega\times\R^m$,
$\omega$ being $\gamma$-equidistributed. 

\begin{theorem}\label{th:main}
Let $\gamma\in(0,1/2)$, $\sigma\geq0$, $s,T>0$, $c_1,c_2,\beta_1,\beta_2>0$.
Take $V\in L^{\infty}_{\mathrm{loc}}(\R^{n})$ satisfying Assumption \ref{A2}
with parameters $c_1,c_2,\beta_1,\beta_2$. Let $s_*=\dfrac{\beta_1+2}{3}$.

Let $\omega\subset\R^n$ be a $\gamma$-equidistributed set. Then the fractional Baouendi-Grushin heat equation associated to $V$
\begin{equation}
\label{egb}\tag{$E_{BG,s}$}
\left\{
\begin{array}{l}
\begin{aligned}
\partial_t u(t,x,y)+\bigl(-\Delta_x-V(x)\Delta_y\bigr)^s u(t,x,y)=h(t,x,y)&\un_{\omega\times\R^m}(x,y)\\
&\mbox{for }t>0,\,\, (x,y)\in \R^{n}\times \R^{m},\\
\end{aligned}\\
u(0,x,y)=u_0(x,y)\quad \mbox{for }(x,y)\in \R^{n}\times \R^{m}
\end{array}\right.
\end{equation}
with initial condition $u_0\in L^2(\R^{n}\times \R^{m})$
is exactly null-controllable from $\omega\times\R^m$ in any time $T>0$ if $\beta_2=\beta_1$ and $s>s_*$.

If $V$ further satisfies Assumption \ref{A1}, then the same holds with the critical
power $s_*=\dfrac{\beta_1+2}{4}$ instead of $\dfrac{\beta_1+2}{3}$.
\end{theorem}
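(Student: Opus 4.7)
The plan is to apply the standard Lebeau--Robbiano strategy to the fractional Baouendi--Grushin semigroup, after first diagonalizing the operator in the $y$-variable, in the spirit of \cite{alphonse2023quantitative,beauchard2018null}.

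By the Hilbert Uniqueness Method ({\it see} \cite{coron_book,JLL_book,TW_book}), it suffices to prove observability of the adjoint evolution $\partial_t v+(\lc_V)^s v=0$ from $\omega\times\R^m$. Since $V$ does not depend on $y$, the partial Fourier transform in $y$ unitarily identifies $\lc_V$, and hence $\lc_V^s$, with the direct integral (direct sum when $y\in\T^m$) of the Schr\"odinger operators $H_\eta:=-\Delta_x+|\eta|^2V(x)$, $\eta\in\R^m$ (respectively $\Z^m$), acting on $\R^n$. By Plancherel in $y$ the global observability therefore reduces to an observability estimate for each fractional Schr\"odinger evolution $\partial_tv_\eta+H_\eta^sv_\eta=0$ from $\omega\subset\R^n$, with a constant uniform in $\eta$.

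For each $\eta$ the rescaled potential $V_\eta:=|\eta|^2V$ still satisfies Assumption \ref{A2} (respectively \ref{A1}) with constants $(c_1|\eta|^2,c_2|\eta|^2,\beta_1,\beta_2)$. The quantitative spectral inequality of Theorem \ref{thm1.1}, which tracks the $(c_1,c_2)$-dependence of the constants $\kappa_0,\kappa_1$ in \eqref{eq:speczz}, then applies to $H_\eta$ and yields, for every $\phi\in\mathcal{E}_{H_\eta}(\lambda)$,
\begin{equation*}
\|\phi\|_{L^2(\R^n)}^2\le\kappa_0\left(\frac1\gamma\right)^{K(|\eta|,\lambda)}\|\phi\|_{L^2(\omega)}^2,
\end{equation*}
where $K(|\eta|,\lambda)$ is an explicit polynomial in $|\eta|$ and $\lambda$. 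A simple rescaling argument for $-\Delta_x+|\eta|^2|x|^{\beta_1}$ shows that the bottom of the spectrum of $H_\eta$ is of order $|\eta|^{4/(\beta_1+2)}$, so the condition $\mathcal{E}_{H_\eta}(\lambda)\neq\{0\}$ forces $|\eta|\lesssim\lambda^{(\beta_1+2)/4}$; substituting this worst-case bound reduces $K(|\eta|,\lambda)$ to an expression of the form $\kappa\lambda^\theta$, with $\theta=(\beta_1+2)/4$ under Assumption \ref{A1} and $\theta=(\beta_1+2)/3$ under Assumption \ref{A2}. Combined with the trivial dissipation estimate $\|(I-\pi_\lambda)e^{-tH_\eta^s}\|\le e^{-t\lambda^s}$, where $\pi_\lambda$ projects onto $\mathcal{E}_{H_\eta}(\lambda)$, the abstract Lebeau--Robbiano scheme ({\it see} \cite{nakic2020sharp,gallaun2020sufficient,beauchard2018null,alphonse2023quantitative}) applied to $(e^{-tH_\eta^s})_{t\geq 0}$ produces observability from $\omega$ in arbitrary time $T>0$ with a constant uniform in $\eta$ as soon as $\theta<s$; reassembling via Plancherel in $y$ then gives observability of $\lc_V^s$ from $\omega\times\R^m$, and HUM delivers the null-controllability.

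The main technical obstacle lies precisely in the $(c_1,c_2)$-tracking announced above: Zhu--Zhuge's proof, built on a propagation-of-smallness / Carleman-estimate argument, must be revisited line by line so that every constant is expressed as an explicit polynomial in $c_1$ and $c_2$, because any spurious extra factor of $|\eta|$ in $K(|\eta|,\lambda)$ would inflate the effective threshold $s_*$. The gap between the two exponents $(\beta_1+2)/4$ and $(\beta_1+2)/3$ reflects precisely the additional loss incurred when the rough piece $V_2$ in Assumption \ref{A2} is only controlled through $|V_2|^{4/3}$, which costs one extra power of $|\eta|$ in the Carleman step.
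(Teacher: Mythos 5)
Your outline is correct and, at the structural level, it is the paper's proof: HUM reduces null-controllability to observability of the self-adjoint semigroup, the partial Fourier transform in $y$ decomposes $\lc_V^s$ into the fibered fractional Schr\"odinger evolutions $H_\eta=-\Delta_x+|\eta|^2V$, the $(c_1,c_2)$-quantitative Zhu--Zhuge inequality (Theorem \ref{thm1.1}) is the key input, and the abstract Lebeau--Robbiano result (Theorem \ref{thm2.1d}) converts it into observability, uniformly in $\eta$, before Plancherel reassembles everything. Where you genuinely deviate is in how the frequency parameter is processed. You eliminate $\eta$ at the level of the spectral inequality: since $\mathcal{E}_{H_\eta}(\lambda)\neq\{0\}$ forces $\lambda\gtrsim|\eta|^{4/(\beta_1+2)}$, you substitute $|\eta|\lesssim\lambda^{(\beta_1+2)/4}$ into the exponent and apply Theorem \ref{thm2.1d} once with an $\eta$-independent exponent $\lambda^{\theta}$, $\theta=(\beta_1+2)/4$ resp. $(\beta_1+2)/3$. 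The paper instead keeps the scaling parameter $r=|\eta|^2$ explicit (Proposition \ref{prop:powers}), obtains an $r$-dependent observability cost (Proposition \ref{prp2.2}), uses the ground-state decay $e^{-cTr^{2s/(\beta_1+2)}}$ of the semigroup (Corollary \ref{cor:sem}) to tame the regime $r\geq1$, and then takes $\sup_{r>0}$ (Lemmas \ref{lem:prp2.2} and \ref{lem:cond}). Both routes give the same thresholds $s_{A1}=(\beta_1+2)/4$ and $s_{A2}=(\beta_1+2)/3$; your bookkeeping is a bit lighter, while the paper's yields explicit observability costs in $(T,\gamma)$ and the $r$-flexibility exploited for Proposition \ref{th:BGS}.

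Two caveats you should address. First, the worst-case substitution only treats the upper endpoint $|\eta|\sim\lambda^{(\beta_1+2)/4}$; you must also verify that the exponent in Theorem \ref{thm1.1} stays bounded as $|\eta|\to0$ for fixed $\lambda$, because $\mathbf{J}(c_1|\eta|^2,\lambda)$ contains $\bigl((\lambda+2)/(c_1|\eta|^2)\bigr)^{\beta_2/(2\beta_1)}$, which blows up at small $|\eta|$ unless it is cancelled by the accompanying power of $|\eta|$ coming from the $c_2$-term. This cancellation is exactly where the hypotheses $\beta_2=\beta_1$ and $\sigma=0$ (equidistributed control set) are used — in the paper this is the condition $a_-,b_-\geq0$ of Lemma \ref{lem:prp2.2}(i) — and your sketch is silent about it. Second, your scaling statement for Assumption \ref{A2} is off: $|\eta|^2V$ satisfies \ref{A2} with parameters $\bigl(c_1|\eta|^2,\ c_2\max(|\eta|^2,|\eta|^{8/3})\bigr)$, not $(c_1|\eta|^2,c_2|\eta|^2)$, because of the $|V_2|^{4/3}$ term; with $c_2|\eta|^2$ you would obtain the exponent $(\beta_1+2)/4$ under \ref{A2} as well, contradicting the threshold $(\beta_1+2)/3$ you assert. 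Your closing paragraph shows you understand that the $4/3$-loss is the real mechanism, so this is a slip rather than a structural error, but as written the two statements are inconsistent and the computation leading to $\theta$ must be done with the correct scaling.
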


Our second result is that, in the presence of a second potential, the set $\omega$
can be sparser. More precisely, we have the following:

\begin{proposition}\label{th:BGS}
Let $\gamma\in(0,1/2)$, $\sigma\geq0$, $T>0$, $c_1,c_2,\beta_1,\beta_2>0$ and $s>\dfrac{\beta_2+2\sigma}{2\beta_1}$.
Take $V,\tilde V\in L^{\infty}_{\mathrm{loc}}(\R^{n})$ satisfying Assumption \ref{A2}
with parameters $c_1,c_2,\beta_1,\beta_2$. Let $s_*=\dfrac{\beta_1+2}{3}$.
Let $\omega\subset\R^n$ be a $(\gamma,\sigma)$-distributed set.
Then the fractional Baouendi-Grushin-Schr\"odinger heat equation associated to $V$ and $\tilde V$
\begin{equation}
\label{egbs}\tag{$E_{BGS,s}$}
\left\{
\begin{array}{l}
\begin{aligned}
\partial_t u(t,x,y)+\bigl(-\Delta_x-V(x)\Delta_y+\tilde V(x)\bigr)^s u(t,x,y)=h(t,x&,y)\un_{\omega\times\R^m}(x,y)\\
&\mbox{for }t>0,\,\, (x,y)\in \R^{n}\times \R^{m},\\
\end{aligned}\\
u(0,x,y)=u_0(x,y)\quad \mbox{for }(x,y)\in \R^{n}\times \R^{m}
\end{array}\right.
\end{equation}
with initial condition $u(0,\cdot ,\cdot )=u_0\in L^2(\R^{n}\times \R^{m})$
is  exactly null-controllable in any time $T>0$ if $\beta_2=\beta_1$ and $s>s_*$.

If $V$ further satisfies Assumption \ref{A1}, then the same holds with the critical
power $s_*=\dfrac{\beta_1+2}{4}$ instead of $\dfrac{\beta_1+2}{3}$.
\end{proposition}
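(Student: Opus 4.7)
The plan is to follow the same strategy as in the proof of Theorem~\ref{th:main}, with only the modifications needed to accommodate the Schrödinger perturbation $\tilde V$ and the sparser class of $(\gamma,\sigma)$-distributed sets. By the Hilbert Uniqueness Method the problem reduces to observability of the adjoint semigroup, and a partial Fourier transform in the $y$-variable diagonalises the self-adjoint operator $\lc_V+\tilde V$ as the direct integral over $\eta\in\R^m$ (or $\Z^m$ in the torus case) of the Schrödinger operators
$$
H_\eta:=-\Delta_x+|\eta|^2 V(x)+\tilde V(x)
$$
on $L^2(\R^n)$; correspondingly $(\lc_V+\tilde V)^s$ becomes the direct integral of the $H_\eta^s$. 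Hence it suffices to prove a spectral inequality for $H_\eta$ that is uniform in a large enough range of $\eta$, and then reassemble via Plancherel.

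The first step is to verify that for every $\eta$ the potential $W_\eta:=|\eta|^2 V+\tilde V$ satisfies Assumption~\ref{A0} with explicit constants. The lower bound $W_\eta(x)\geq c_1(1+|\eta|^2)(|x|-1)_+^{\beta_1}$ is immediate from Assumption~\ref{A2}(i). For the upper bound, writing $V=V_1+V_2$ and $\tilde V=\tilde V_1+\tilde V_2$ and grouping $W_\eta=(|\eta|^2 V_1+\tilde V_1)+(|\eta|^2 V_2+\tilde V_2)$, the inequality $(a+b)^{4/3}\leq 2^{1/3}(a^{4/3}+b^{4/3})$ produces an estimate of the form $C\,c_2\,(1+|\eta|)^{8/3}(|x|+1)^{\beta_2}$. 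Plugging these constants into the refined quantitative Zhu--Zhuge inequality (Theorem~\ref{thm1.1}) gives, for every $\psi\in\mathcal{E}_{H_\eta}(\lambda)$, an estimate of the form
$$
\|\psi\|_{L^2(\R^n)}^2\leq \kappa_0(\eta)\left(\frac{1}{\gamma}\right)^{\kappa_1(\eta)\lambda^{\zeta}}\|\psi\|_{L^2(\omega)}^2,\qquad \zeta=\frac{2\sigma+\beta_2}{2\beta_1},
$$
with $\kappa_0(\eta),\kappa_1(\eta)$ growing at most polynomially in $|\eta|$.

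To assemble the fibrewise estimates I would use the lower bound $\lambda_1(H_\eta)\gtrsim |\eta|^{2\beta_1/(\beta_1+2)}$, a standard consequence of Assumption~\ref{A2} obtained by interpolating the kinetic energy $-\Delta_x$ and the confining term $c_1|\eta|^2|x|^{\beta_1}$. This forces the partial Fourier transform of any $\phi\in\mathcal{E}_{(\lc_V+\tilde V)^s}(\mu)$ to be supported in $\{|\eta|\lesssim \mu^{\alpha}\}$ for a suitable $\alpha<1$, so that the $|\eta|$-dependent constants can be absorbed into powers of $\mu$ and Plancherel in $\eta$ produces a global spectral inequality
$$
\|\phi\|_{L^2(\R^n\times\R^m)}^2\leq \kappa_0\left(\frac{1}{\gamma}\right)^{\kappa_1\mu^{\zeta_s}}\|\phi\|_{L^2(\omega\times\R^m)}^2
$$
with a combined exponent $\zeta_s<1$ exactly under both hypotheses $s>(\beta_2+2\sigma)/(2\beta_1)$ and $s>s_*$. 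The Lebeau--Robbiano--Miller scheme as formalised in \cite{beauchard2018null,nakic2020sharp,gallaun2020sufficient} then converts this spectral inequality into observability in any time $T>0$, whence null-controllability by HUM. Under the stronger Assumption~\ref{A1} one may take $V_2=\tilde V_2=0$, sharpening the upper bound and lowering the threshold to $s_*=(\beta_1+2)/4$. The main obstacle throughout is precisely the quantitative control of the constants $\kappa_0(\eta),\kappa_1(\eta)$ and of $\lambda_1(H_\eta)$, which is what the refined Theorem~\ref{thm1.1} is designed to provide and what allows the threshold on $s$ to absorb simultaneously the $\eta$-contribution (via $s_*$) and the $\lambda$-contribution (via $\zeta$).
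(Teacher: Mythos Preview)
Your overall strategy is sound and correct in spirit, but it differs from the paper's route and contains one concrete error that matters for the threshold on $s$.

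\textbf{Comparison of approaches.} The paper does \emph{not} build a global spectral inequality for $(\lc_V+\tilde V)^s$ and then apply Lebeau--Robbiano once. Instead it applies the Lebeau--Robbiano machinery \emph{fibrewise}: for each $\eta$ it invokes Proposition~\ref{prp2.2} to get an observability constant $C_{\mathrm{obs}}(T,s,|\eta|^2V+\tilde V,\omega)$ directly, then uses Remark~\ref{rq:2pot} to replace the potential $|\eta|^2V+\tilde V$ by $(1+|\eta|^2)V$, so that only the regime $r=1+|\eta|^2\geq 1$ of Proposition~\ref{prop:powers} is needed. Plancherel is applied at the very end, to the observability estimate itself, and the proof concludes by citing Lemma~\ref{lem:prp2.2}(ii) and Lemma~\ref{lem:cond} for $\sup_{r\geq 1}C_{\mathrm{obs}}(T,s,rV,\omega)<\infty$. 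Your route (spectral inequality first, Plancherel, then one global Lebeau--Robbiano step) is the Alphonse--Seelmann order of operations; it also works, and in fact both approaches reduce to exactly the same numerical condition, namely $\dfrac{2s}{\beta_1+2}>\max\bigl(a_+,\dfrac{s}{s-\zeta}b_+\bigr)$, which Lemma~\ref{lem:cond} shows is equivalent to $s>s_*$. The paper's route has the advantage that the combinatorics of the exponents is already packaged in Proposition~\ref{prop:powers} and Lemma~\ref{lem:cond}, so nothing further needs to be checked.

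\textbf{An error and a gap.} Your stated lower bound $\lambda_1(H_\eta)\gtrsim|\eta|^{2\beta_1/(\beta_1+2)}$ is incorrect: the scaling of $-\Delta_x+c|\eta|^2|x|^{\beta_1}$ gives $\lambda_0\gtrsim|\eta|^{4/(\beta_1+2)}$ (this is exactly Proposition~\ref{prp2.2c} with $c=c_1(1+|\eta|^2)$). The two agree only when $\beta_1=2$. With the wrong exponent, the support constraint on $\eta$ is off and the resulting threshold for $s$ would not be $s_*$. More importantly, the sentence ``with a combined exponent $\zeta_s<1$ exactly under both hypotheses'' is precisely the computation that carries the whole proof, and you do not perform it. In the paper this verification is the content of Lemma~\ref{lem:cond}; in your framework it amounts to checking that $a_+\dfrac{\beta_1+2}{2s}<1$ and $\dfrac{\beta_1+2}{2}b_++\zeta<s$, and this is where the value of $s_*$ actually emerges. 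Without carrying out this step, the proposal remains an outline rather than a proof. (Two minor points: Assumption~\ref{A2} gives the lower bound $c_1|x|^{\beta_1}$, not $c_1(|x|-1)_+^{\beta_1}$; and the hypothesis $\beta_1=\beta_2$ in the statement is in fact not used in the paper's proof for the $r\geq 1$ regime.)
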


Finally, we also obtain a result on $\R^n\times\T^m$, that is, when the equation is periodic in the $y$
variable. Here the sets $\omega$ need again to be $\gamma$-equidistributed, but there is a relaxation
on the potential as the upper bound can now be of a different order than the lower bound ($\beta_2>\beta_1$):

\begin{theorem}\label{th:mainperiodic}
Let $\gamma\in(0,1/2)$, $\sigma\geq0$, $s,T>0$, $c_1,c_2,\beta_1,\beta_2>0$.
Take $V\in L^{\infty}_{\mathrm{loc}}(\R^{n})$ satisfying Assumption \ref{A2}
with parameters $c_1,c_2,\beta_1,\beta_2$. Let $s_*=\dfrac{\beta_1+2}{3}$.

Let $\omega\subset\R^n$ be a $\gamma$-equidistributed set. Then the semi-periodic fractional Baouendi-Grushin heat equation associated to $V$
\begin{equation}
\label{egbp}\tag{$E_{pBG,s}$}
\left\{
\begin{array}{l}
\begin{aligned}
\partial_t u(t,x,y)+\bigl(-\Delta_x-V(x)\Delta_y\bigr)^s u(t,x,y)=h(t,x,y)&\un_{\omega\times\R^m}(x,y)\\
&\mbox{for }t>0,\,\, (x,y)\in \R^{n}\times \T^{m},\\
\end{aligned}\\
u(0,x,y)=u_0(x,y)\quad \mbox{for }(x,y)\in \R^{n}\times \T^{m}
\end{array}\right.
\end{equation}
with initial condition $u(0,\cdot ,\cdot )=u_0\in L^2(\R^{n}\times \T^{m})$
is exactly null-controllable in any time $T>0$ if $s>s_*$.

If $V$ further satisfies Assumption \ref{A1}, then the same holds with the critical
power $s_*=\dfrac{\beta_1+2}{4}$ instead of $\dfrac{\beta_1+2}{3}$.
\end{theorem}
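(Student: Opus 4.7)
The plan is to reduce the problem on $\R^n\times\T^m$ to a countable family of Schr\"odinger problems on $\R^n$ by Fourier series in the periodic variable $y$, to apply the quantitative Schr\"odinger spectral inequality of Theorem~\ref{thm1.1} mode by mode, to sum the resulting estimates into a spectral inequality for $\lc_V$, and finally to feed this inequality into the Lebeau--Robbiano observability criterion (in the form of Gallaun--Meichsner--Seelmann). Null-controllability of \eqref{egbp} then follows from HUM since $\lc_V^s$ is self-adjoint.

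Concretely, I would expand $u(t,x,y)=(2\pi)^{-m/2}\sum_{k\in\Z^m}u_k(t,x)e^{ik\cdot y}$. Since $y$-translations commute with $\lc_V$, the operator is diagonal in $k$ and acts on the $k$-th mode as the Schr\"odinger operator
$$
H_k:=-\Delta_x+|k|^2V(x).
$$
Because $V$ satisfies Assumption~\ref{A2} (resp.~\ref{A1}) with constants $(c_1,c_2,\beta_1,\beta_2)$, the potential $|k|^2V$ satisfies the same assumption with rescaled constants $(c_1|k|^2,\,c_2|k|^2,\,\beta_1,\beta_2)$. A scaling / min--max argument yields the eigenvalue lower bound $\mu_1(H_k)\gtrsim c_1^{2/(\beta_1+2)}|k|^{4/(\beta_1+2)}$ for $k\neq 0$. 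Consequently, if $\phi=(2\pi)^{-m/2}\sum_k\phi_k(x)e^{ik\cdot y}$ belongs to $\mathcal{E}_{\lc_V}(\Lambda)$, then $\phi_k\in\mathcal{E}_{H_k}(\Lambda)$ and is forced to vanish as soon as $|k|\gtrsim\Lambda^{(\beta_1+2)/4}$, so only finitely many Fourier modes contribute at any given spectral level.

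For each contributing mode, Theorem~\ref{thm1.1} applied to $H_k$ with $\sigma=0$ (as $\omega$ is $\gamma$-equidistributed) produces
$$
\|\phi_k\|_{L^2(\R^n)}^2\leq K_0(|k|)\left(\frac{1}{\gamma}\right)^{K_1(|k|)\,\Lambda^{\beta_2/(2\beta_1)}}\|\phi_k\|_{L^2(\omega)}^2,
$$
where $K_0(|k|),K_1(|k|)$ depend polynomially on $|k|$ through the quantitative dependence on the scaled parameters $c_1|k|^2,c_2|k|^2$ provided by Theorem~\ref{thm1.1}. Summing via Parseval and taking the supremum of the exponential factor over the finite range $|k|\lesssim\Lambda^{(\beta_1+2)/4}$ yields a spectral inequality for $\lc_V$ of the form $\|\phi\|_{L^2(\R^n\times\T^m)}^2\leq C_0\gamma^{-C_1\Lambda^{\zeta^\star}}\|\phi\|_{L^2(\omega\times\T^m)}^2$ with effective exponent $\zeta^\star=(\beta_1+2)/4$ under Assumption~\ref{A1} and $\zeta^\star=(\beta_1+2)/3$ under Assumption~\ref{A2}. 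Combined with the automatic semigroup dissipation $\|(I-\Pi_\Lambda)e^{-t\lc_V^s}f\|\leq e^{-t\Lambda^s}\|f\|$, the standard Lebeau--Robbiano telescoping then delivers the observability inequality as soon as $s>\zeta^\star=s_\star$. The mode $k=0$ is handled separately by Kovrijkine's inequality for the pure fractional Laplacian (every $\gamma$-equidistributed set is thick), with an exponent $1/2\leq\zeta^\star$ that is harmlessly absorbed.

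The main technical obstacle is the bookkeeping alluded to above: one must verify that the constants $K_0(|k|),K_1(|k|)$ entering the Zhu--Zhuge-type spectral inequality for $H_k$ indeed grow only polynomially, and with the correct exponent, in the scaled potential parameters $c_1|k|^2,c_2|k|^2$. This is precisely the point of revisiting the proof of \cite{zhu2023spectral} in Theorem~\ref{thm1.1}: the original reference does not state the explicit $(c_1,c_2)$-dependence that summation over Fourier modes in the Baouendi--Grushin setting requires, and it is this dependence that ultimately pins down both the value of $s_\star$ and the dichotomy between Assumptions~\ref{A1} and~\ref{A2}.
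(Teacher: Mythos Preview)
Your proposal is correct and reaches the same conclusion, but the route differs from the paper's. Both arguments Fourier-decompose in $y$, reducing to the family $H_k=-\Delta_x+|k|^2V(x)$, and both rely on the quantitative Zhu--Zhuge spectral inequality (Theorem~\ref{thm1.1}) with explicit dependence on the rescaled parameters. The divergence is in where Lebeau--Robbiano is invoked. You assemble a \emph{global} spectral inequality for $\lc_V$ on $\R^n\times\T^m$ by summing the mode-by-mode inequalities (using the eigenvalue lower bound $\mu_1(H_k)\gtrsim|k|^{4/(\beta_1+2)}$ to truncate the sum to $|k|\lesssim\Lambda^{(\beta_1+2)/4}$), and then run Lebeau--Robbiano once for $\lc_V^s$. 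The paper instead applies Lebeau--Robbiano \emph{at each fixed $k$} via Theorem~\ref{thm2.1d}, obtaining an observability constant $C_{\mathrm{obs}}(T,s,|k|^2V,\omega)$ for every mode, then invokes the dissipation estimate of Corollary~\ref{cor:sem} to prove $\sup_{r\ge 1}C_{\mathrm{obs}}(T,s,rV,\omega)<\infty$ whenever $s>s_*$ (Lemmas~\ref{lem:prp2.2}--\ref{lem:cond}), and finally sums the observability inequalities via Parseval. Your approach has the advantage of producing a spectral inequality for $\lc_V$ itself as a byproduct (this is closer in spirit to Alphonse--Seelmann); the paper's approach is slightly more streamlined since it reuses an off-the-shelf observability-with-cost theorem and never needs to explicitly bound the supremum of the spectral constants over the truncated range of modes.

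One small slip: you write the rescaled constants as $(c_1|k|^2,\,c_2|k|^2)$ under both assumptions, but under Assumption~\ref{A2} the term $|V_2|^{4/3}$ forces $c_2\mapsto c_2|k|^{8/3}$ rather than $c_2|k|^2$ (see the paper's Section~\ref{subsec.5}). This is exactly what produces the $a_+=2/3$ (versus $a_+=1/2$ under~\ref{A1}) and hence the different critical exponents $s_*=(\beta_1+2)/3$ versus $(\beta_1+2)/4$. Your stated values of $\zeta^\star$ are nonetheless correct, so this would be caught in a full writeup; just be aware that the dichotomy between the two assumptions enters precisely through this rescaling and not merely through ``careful bookkeeping''.
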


\begin{remark}
For the sake of this last theorem, the main difference between the Baouendi-Grushin
operator on $\R^n\times\R^m$ and $\R^n\times\T^m$ is that $L^2(\T^m)$ has an orthonormal
basis of eigenvectors of the Laplace operator
on $\T^m$. The same result would be true e.g. if $\T^m$
is replaced by any compact Riemannian manifold $S$ and $\Delta_y$ the
Laplace-Beltrami operator on $S$.

Our proof does not extend to controllability from $(\gamma,\sigma)$-distributed sets
mainly because $0$ is an eigenvalue. This difficulty no longer arises when adding a second potential
$\tilde V$ satisfying the same assumption as $V$ and considering
$$
\begin{aligned}
		\partial_t u(t,x,y)+\bigl(-\Delta_x-V(x)\Delta_y+\tilde V(x)\bigr)^s u(t,x,y)
		=h(t,x,y)&\un_{\omega\times\T^m}(x,y)
		\\
		 &t>0,\,\, (x,y)\in \R^{n}\times \T^{m},\nonumber
\end{aligned}
$$
instead of \eqref{egbp}.
\end{remark}

The results have been recasted in the synthetic Table \ref{table.1}, Section \ref{secfinalproof}
where we will show the corresponding obsevability inequalities.

\begin{remark}\label{r1h}
In the standard case of $V(x)=|x|^\beta$, the
null-controllability from sets of the form $\tilde\omega=\omega\times \R^m$ resp. $\tilde\omega=\omega\times \T^m$
of the evolution equation
\begin{equation}
\label{std.1}\tag{$E_{\beta,s}$}
\begin{aligned}
		\partial_t u(t,x,y)+\bigl(-\Delta_x-|x|^\beta\Delta_y\bigr)^s u(t,x,y)=h(t,x,y)\un_{\omega\times\T^m}(x,y)
		\\
		 t>0,\ x\in \R^n,\ y\in \R^{n}\mbox{ resp. }\T^{m},\nonumber
\end{aligned}
\end{equation}
with initial condition $u(0,\cdot ,\cdot )=u_0\in L^2(\R^{n}\times \R^{m})$ (resp. $ L^2(\R^{n}\times \T^{m})$)
is summarized in the following picture:

\begin{figure}[htpb]
	\centering
	\begin{tikzpicture}
		\path[fill=gray!60] (0, 1) node[left] {\small $\displaystyle\frac{2}{3}$} -- (2.0,2) --(1.9,5) -- (0,5) -- (0, 4 /3);
		\path[fill=gray!60] (1.9,3 /2)  -- (1.9,5)  -- (5,5) -- (5, 9 /4) -- (1.9,3 /2);
		%\path[fill=gray!60] (0, 4 /3) -- (1.9,2) -- (1.9,0)-- (0,0) -- (0, 4 /3);
		%\path[fill=gray!40] (1.9, 3 /2) -- (5, 9 /4) -- (5, 0) -- (1.9, 0);
		% draw axis
		\draw[<->,thick] (0,5) node (yaxis) [above] {$s$}
			|- (5,0) node (xaxis) [right] {$\beta$} ;
		% draw first line for beta in (0,1)
		\draw[thick,dashed,o-o] (-0.1,1) -- (2,2);
		\path (0.3,1.2) -- (2,2) node[midway,above,sloped] {\small $s=(\beta +2) /3$};
		% draw second line for beta in (1,\infty)
		\draw[thick,dashed,o-] (1.81, 3 /2) -- (5,9 /4)node[midway,above,sloped] {\small $s=(\beta +2) /4$};
		\draw[dashed] (1.9,0) node[below] {\small $1$} -- (1.9,2);
		\draw (.5,3.5) node[right] {Exactly null-controllable};
		\draw (1.9, 1.3) node[right] {\small $\displaystyle (1,\frac{3}{4})$};
	\end{tikzpicture}
	\caption{The standard case}
	\label{fig:1}
\end{figure}
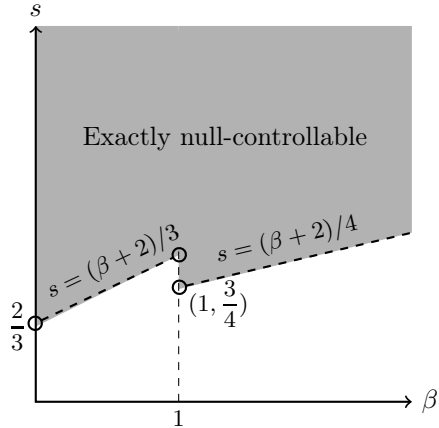

Our results thus partially recover those of Alphonse and Seelmann \cite{alphonse2023quantitative}.
There are two main losses compared to their results: one is that our control sets are a bit less general, and another is that we are not able to obtain the null-controllability of equations for the critical case $s=s_*$. The main gain is that our potentials
are more general.

Further, when $\beta=2k$ and $s=(\beta+2)/4$, Alphonse and Seelmann \cite[Theorem~2.17]{alphonse2023quantitative} show that the evolution equation is never exactly null-controllable from any control support $\omega \subset \R^{n}\times \T^{m}$ satisfying $\overline{\omega}\cap \left\{x=0\right\} =\emptyset$. This implies $s= (\beta +2) /4$ is the critical value for $\beta=2k \ge 2$. Based on our results, it is reasonable to conjecture that it is the critical value for all  
$\beta \ge 1$ under Assumption~\ref{A1} and $\beta=\beta_1=\beta_2$.

For the case $0<\beta <1$, it seems that the critical value may be a little worse. Indeed, we only obtain exactly null-controllability results up to $s=\dfrac{\beta +2}{3}>\dfrac{\beta +2}{4}$. 
One of the possible reasons may be the wilder behavior of $V=|x|^{\beta}$ around $0$.

Our results presented above are in line with articles devoted to the study of null-controllability of \eqref{std.1} in bounded domains, {\it see} \cite{cannarsa2013null,allonsius2021analysis,beauchard2020minimal,beauchard20152d,darde2022null,duprez2020control,koenig2017non}. All results in these articles give a fact that the null-controllability of \eqref{std.1} is governed by minimal time in the critical case $s=(\beta_1 +2) /4$. Note that we have a larger critical point $s= (\beta_1 +2) /3$ in Theorem~\ref{th:main} under Assumption~\ref{A2}. 
It is worthy to mention that in \cite[Theorem~1.2]{lissy2022non} the author considers the case of dimension $n=m=1,\beta=2$ and $s=1$, and then proved the equation \eqref{std.1} is never exactly null-controllable from any control support of the form  $\R\times \omega$ whenever $\overline{\omega}\neq \R$.
\end{remark}

\subsection{Strategy of proof and outline of the work}

Our strategy of proof is near to the one developped in Alphonse and Seelmann's work in \cite{alphonse2023quantitative}
and based on an earlier idea of Beauchard, Cannarsa and Guglielmi \cite{cannarsa2013null}.
To avoid technicalities, we concentrate on the non-fractional case ($s=1$).

We observe that the operator $-\Delta_x+V(x)\Delta_y$ is self-adjoint when $V$ is real valued
so that null-controllability is equivalent to observability.
The first step consists in taking a partial Fourier transform in the $y$-variable of $u$:
$$
u^\lambda(t,x)=\int_{\R^m}u(t,x,y)e^{-i\scal{y,\lambda}}\,\mbox{d}y
$$
so that we are lead to observability of the heat equation associated to Schr\"odinger operator
$$
\partial_tu^\lambda-\Delta_x u^\lambda-\lambda^{2}V(x)u^\lambda=0.
$$
We can then use results on the observability of this equation, in particular those of Zhu-Zhuge
\cite{zhu2023spectral}. 
Note that the potential $V_\lambda=\lambda^{2}V(x)$ now depends on the frequency parameter $\lambda$. 
The main difficulty is precisely to deal with this parameter. This has been done by
Alphonse and Seelmann when $V$ is a power function and is more difficult here
and requires two key differences in the proof:

\begin{enumerate}
\item We use a different kind of spectral inequality, which we call Zhu-Zhuge's inequality given in 
\cite[Theorem~1]{zhu2023spectral}. However, the original form of this theorem cannot be used directly, 
since it does not provide explicit dependence of the cost constant on the parameters in Assumptions \ref{A1}-\ref{A2}.
Our first task is thus to convert Zhu-Zhuge's proof into a more quantitative one which
leads us to an explicit form of the cost constant in the spectral inequality in terms of the parameters.
This is vital for our proofs in order to invert the partial Fourier transform in the $y$-variable.
This allows to take into account the frequency parameter $\lambda$.

\item The lowest eigenvalue of the Schrödinger operator $-\Delta+V$ is easily obtained in the case $V=c|x|^{2k},k \in \N$ by the rescaling approach. It does not work for our general potentials under Assumption \ref{A1}
or \ref{A2}. To overcome this difficulty, we do not calculate the exact value of the lowest eigenvalue, instead we just calculate a lower bound which satisfies our needs. This allows us to deal with more general potentials.
\end{enumerate}

In Theorem~\ref{th:main} the set $\omega$ is $(\gamma,0)$-distributed rather than a sparser
$(\gamma,\sigma)$-distributed set. This comes from the fact that the spectral
multiplier associated to $-\Delta_y$, namely $|\lambda|^2$ degenerates at $\lambda=0$.
This singularity is erased when adding a second potential as in Proposition \ref{th:BGS}
and explains why sparser sets are allowed there.

\smallskip

The remaining of the paper is organised as follows:

Section \ref{sec3d} is devoted to the spectral theory of Schr\"odinger operators
$H_{rV}=-\Delta+rV$ where $V\in L^\infty_{\mathrm{loc}}(\R^n)$ with a polynomial growth
$V(x)\geq c|x|^\beta$, $c,\beta>0$ and a scaling parameter $r>0$.
In the Section \ref{subsec3.1}, we provide a lower bound of the lowest eigenvalue of
$H_{rV}$ in terms of the scaling parameter while in the Section \ref{subsec2.2h} we then
provide a more quantitative version of the localization property of eigenfunctions
of $H_V$ first established in
\cite[Theorem 1.4]{dicke2022spectral} and \cite[Proposition 3]{zhu2023spectral}.

Section \ref{sec4d} is then devoted to the precised form of Zhu-Zhuge's Spectral Inequality
\eqref{eq:speczz} in which we clarify the dependence of the constant $\kappa_1$ appearing there
in terms of the parameters $c_1,c_2$ (and $\beta_1,\beta_2$).
This allows us in Section \ref{subsec.5} to obtain precised version of \eqref{eq:speczz} in which $\kappa_1$ is estimated 
in terms of
the scaling parameter $r$ of the potential {\it i.e.} when applying \eqref{eq:speczz} to $H_{rV}$
instead of $H_V$. In Section \ref{subsecobs}, we conclude this section by computing
an observability constant of $H_{rV}$ from $(\gamma,\sigma)$-distributed sets
and then bound this constant when $r$ varies over $]0,1[$ or $[1,+\infty)$.

Section \ref{wellposed} is devoted to well-posedness of the considered equations and also introduces some
notation for the last section.

Finally, we prove Theorem~\ref{th:main}, Proposition \ref{th:BGS} and their $y$-periodic counterparts
in Section \ref{sec2d} by establishing the corresponding observability
inequalities. In the simplest cases, we also provide an explicit
form of the observability constants.

\medskip

Throughout the paper, we will write $A(x)\lesssim B(x)$ to say that there is a constant $C$ that does not
depend on the parameter $x$ such that $A\leq CB$. At times, it will be more convenient to make
the constants appear explicitely and in this case, they may change from line to line and still be called
with the same letter.

\section{Eigenfunctions of the Schrödinger operator}\label{sec3d}

Let $V$ be a real valued, non-negative function on $\R^n$ with $V\in L^\infty_{\mathrm{loc}}(\R^n)$ and
$$
\lim_{|x|\to+\infty}V(x)=+\infty
$$
but for some integer $m$,
$$
\lim_{|x|\to+\infty}x^mV(x)=0
$$
and consider the associated Schr\"odinger operator
$$
H_Vf(x)=-\Delta f(x)+V(x)f(x).
$$
This operator is well defined on $\mathcal{S}(\R^n)$ and extends to an (unbounded) self-adjoint operator on $L^2(\R^n)$. Further it is positive since, for every $f\in \mathcal{S}(\R^n)$,
$$
\scal{H_Vf,f}=\int_{\R^n}|\nabla f(x)|^2+V(x)|f(x)|^2\,\mbox{d}x\geq 0
$$

The following is well-known ({\it see e.g.} \cite{BS}):

\begin{theorem}
Let $c,\beta>0$ and $V\in L^\infty_{\mathrm{loc}}(\R^n)$ be such that $V(x)\geq c|x|^\beta$.
Then there exists a sequence $(\lambda_k)_{k\in\N}$, with $0\leq \lambda_0\leq\lambda_1\leq\cdots$
and $|\lambda_k|\to+\infty$
and an orthonormal basis $(\phi_k)_{k\in\N}$ of $L^2(\R^n)$ consisting of eigenvectors of $H_V$,
$\phi_k\in H^1(\R^n)$ and $H_V\phi_k=\lambda_k\phi_k$.
Moreover, there exists $a,C>0$ such that, for every $x$,
$$
|\phi_k(x)|\leq C\exp(-a|x|^{1+\frac{\beta}{2}}).
$$
\end{theorem}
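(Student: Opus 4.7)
The plan is to split the argument into two independent parts: (1) existence of the spectral data via the quadratic form method, and (2) the Agmon-type pointwise decay estimate.

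For part (1), I would first define $H_V$ rigorously as the Friedrichs extension of the non-negative quadratic form
\[
q(f)=\int_{\R^n}\bigl(|\nabla f(x)|^2+V(x)|f(x)|^2\bigr)\,\d x,
\]
initially defined on $C_c^\infty(\R^n)$ and closed on the form domain
\[
Q=\{f\in H^1(\R^n):V^{1/2}f\in L^2(\R^n)\}.
\]
The key structural step is to show that $H_V$ has compact resolvent, which is where the growth hypothesis $V(x)\geq c|x|^\beta$ enters. Concretely, any $q$-bounded sequence is bounded in $H^1$ and has uniformly controlled tails, since $\int_{|x|>R}|f|^2\leq (cR^\beta)^{-1}\int V|f|^2$. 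A standard diagonal argument combining local Rellich compactness (on balls) with tail control furnishes a convergent subsequence in $L^2(\R^n)$. Once $H_V$ has compact resolvent, the spectral theorem for self-adjoint compact operators supplies the non-decreasing sequence $(\lambda_k)$ with $\lambda_k\to+\infty$ and the $L^2$-orthonormal eigenbasis $(\phi_k)$. Since each $\phi_k\in Q\subset H^1(\R^n)$, the membership $\phi_k\in H^1(\R^n)$ is automatic.

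For part (2), I would run Agmon's weighted $L^2$ estimate with weight $\rho(x)=a\langle x\rangle^{1+\beta/2}$ where $\langle x\rangle=(1+|x|^2)^{1/2}$ and $a>0$ is to be chosen small. A direct computation gives $|\nabla\rho(x)|^2\leq Ca^2\langle x\rangle^\beta$. Testing the eigenvalue equation $-\Delta\phi_k+V\phi_k=\lambda_k\phi_k$ against $e^{2\rho}\phi_k$ (after a truncation of $\rho$ to make the test function admissible, removed at the end) and using the identity
\[
\Re\int \bigl(-\Delta\phi_k\bigr)e^{2\rho}\overline{\phi_k}\,\d x=\int e^{2\rho}|\nabla\phi_k|^2\,\d x-\int e^{2\rho}|\nabla\rho|^2|\phi_k|^2\,\d x
\]
yields
\[
\int_{\R^n}\bigl(V-|\nabla\rho|^2-\lambda_k\bigr)e^{2\rho}|\phi_k|^2\,\d x\leq 0.
\]
Using $V(x)\geq c|x|^\beta$, for $a$ small enough the bracket is bounded below by $\frac{c}{2}|x|^\beta$ once $|x|>R_k$ with $R_k\sim\lambda_k^{1/\beta}$, while on $|x|\leq R_k$ the bracket is merely bounded below by a negative constant. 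Splitting the integral and absorbing the bounded region by $\|\phi_k\|_{L^2}=1$ produces the weighted bound
\[
\int_{\R^n}e^{2\rho(x)}|\phi_k(x)|^2\,\d x\leq M_k<\infty.
\]

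Finally, to pass from the weighted $L^2$ estimate to the pointwise bound, I would use interior elliptic regularity: $\phi_k$ solves a Schrödinger equation with locally bounded potential, so by the De Giorgi–Nash–Moser sup-estimate applied on unit balls,
\[
|\phi_k(x)|\leq C\|\phi_k\|_{L^2(\mathcal{B}_1(x))}\leq Ce^{-\rho(x)+\mathrm{O}(1)}\Bigl(\int e^{2\rho}|\phi_k|^2\,\d y\Bigr)^{1/2},
\]
where one exploits that $\rho$ is slowly varying so $e^{-\rho(y)}\leq e^{-\rho(x)+C}$ for $y\in\mathcal{B}_1(x)$. Absorbing $\langle x\rangle^{1+\beta/2}\sim |x|^{1+\beta/2}$ for $|x|\geq 1$ (and trivially handling $|x|\leq 1$ by local boundedness) yields the desired estimate $|\phi_k(x)|\leq C\exp(-a|x|^{1+\beta/2})$, with constants depending on $k$.

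The main obstacle is calibrating the Agmon weight: the coefficient $a$ has to be chosen small enough that $|\nabla\rho|^2\leq \tfrac12 V$ at infinity while keeping the exponent $1+\beta/2$ sharp, and one has to justify the computation on a truncated weight $\rho_N=\min(\rho,N)$ before letting $N\to\infty$ so that all integrations by parts are legitimate and $e^{2\rho_N}\phi_k\in Q$. The elliptic upgrade step is routine provided $V\in L^\infty_{\mathrm{loc}}$, which is part of the hypothesis.
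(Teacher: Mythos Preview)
The paper does not prove this theorem at all; it states the result as well-known and refers to Berezin--Shubin \cite{BS}. Your proposal supplies exactly the standard argument one finds there: Friedrichs extension plus compact resolvent via the tail estimate $\int_{|x|>R}|f|^2\leq (cR^\beta)^{-1}q(f)$, followed by Agmon's method and an elliptic $L^2\to L^\infty$ upgrade. So there is nothing to compare against, and your outline is the expected one.

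Two details in the decay argument need repair. First, your claim that $\rho(x)=a\langle x\rangle^{1+\beta/2}$ is ``slowly varying'' so that $e^{-\rho(y)}\leq e^{-\rho(x)+C}$ for $y\in\mathcal{B}_1(x)$ is false: $|\nabla\rho|\sim a|x|^{\beta/2}$, hence the oscillation of $\rho$ over a unit ball is of order $|x|^{\beta/2}$, not $O(1)$. This is harmless because $|x|^{\beta/2}=o(|x|^{1+\beta/2})$, so after absorbing the correction you recover the stated decay with a slightly smaller $a'<a$ in the exponent --- but you should say so. Second, the De~Giorgi--Nash--Moser constant for $-\Delta u+(V-\lambda_k)u=0$ depends a priori on $\|V\|_{L^\infty(\mathcal{B}_1(x))}$, for which the hypotheses give no upper bound. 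The clean fix is to note that $\phi_k^2$ satisfies $-\Delta(\phi_k^2)+2(V-\lambda_k)\phi_k^2\leq 0$; once $|x|$ is large enough that $V\geq\lambda_k$ on $\mathcal{B}_1(x)$, the subsolution estimate applies with a constant depending only on the dimension, and the bounded region $\{|x|\lesssim\lambda_k^{1/\beta}\}$ is handled by ordinary local regularity. (Your Agmon identity also has a minor slip --- the gradient term should be $\int|\nabla(e^\rho\phi_k)|^2$ rather than $\int e^{2\rho}|\nabla\phi_k|^2$ --- but since you only use its nonnegativity, the conclusion $\int(V-|\nabla\rho|^2-\lambda_k)e^{2\rho}|\phi_k|^2\leq 0$ is unaffected.)
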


\begin{notation}
We will write $\mathcal{E}_V(\lambda)=\mathcal{E}_{H_V}(\lambda)$ for the spectral set associated
to $H_V$, that is
$$
\mathcal{E}_V(\lambda)=\mathrm{span}\{\phi_k\,: k\mbox{ s.t. }\lambda_k\leq\lambda\}.
$$
\end{notation}

In the remaining of this section, we assume that the potential $V\in L^\infty_{\mathrm{loc}}(\R^n)$ satisfies
a global estimate
$$
V(x)\geq c|x|^{\beta}
$$
which is common to Assumptions \ref{A1} and \ref{A2}. We first compute a lower bound of the lowest eigenvalue 
$\lambda_0(V)$. We then give a detailed decay estimate of linear combinations of
eigenfunctions for the Schrödinger operator in terms of the parameters $c,\beta$. 

\subsection{Lower bound of the first eigenvalue}\label{subsec3.1}

We start with the estimate of the first eigenvalue.

\begin{proposition}\label{prp2.2c}
Let $V\in L^1_{\mathrm{loc}}(\R^n)$ and assume that there are $c,\beta>0$ such that  $V\ge c|x|^{\beta}$.
Let $\lambda_0(V)$ be the lowest eigenvalue of the operator $H_V=-\Delta_x+V(x)$. Then we have 
	\begin{equation}
	\label{eq:prp2.2c}
		\lambda_0(V)\ge \mu_{*}:=c^{\frac{2}{\beta+2}} \lambda_{*}
	\end{equation}
	where $\lambda_{*}$ is a positive constant and depends only $\beta$ and $n$.
\end{proposition}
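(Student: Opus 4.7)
The plan is to establish the bound in two moves: first reduce to the pure power potential $c|x|^\beta$ by monotonicity of eigenvalues, then extract the constant $c$ by a scaling argument.

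First I would invoke the min–max characterization of the lowest eigenvalue:
\[
\lambda_0(V)=\inf_{\substack{f\in H^1(\R^n)\\ \|f\|_2=1}}\int_{\R^n}\bigl(|\nabla f(x)|^2+V(x)|f(x)|^2\bigr)\d x.
\]
Since by hypothesis $V(x)\ge c|x|^\beta$ pointwise, the Rayleigh quotient for $V$ dominates that for $c|x|^\beta$ on every admissible $f$, which gives the monotonicity
\[
\lambda_0(V)\ge \lambda_0(c|x|^\beta).
\]

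Next I would perform a dilation. For $r>0$ and $g\in H^1(\R^n)$ with $\|g\|_2=1$, set $f(x)=r^{n/2}g(rx)$. A direct change of variables shows $\|f\|_2=1$, $\int|\nabla f|^2=r^2\int|\nabla g|^2$, and
\[
\int c|x|^\beta|f(x)|^2\d x=cr^{-\beta}\int|y|^\beta|g(y)|^2\d y.
\]
Choosing $r=c^{1/(\beta+2)}$ equalizes $r^2=cr^{-\beta}=c^{2/(\beta+2)}$, so that
\[
\int\bigl(|\nabla f|^2+c|x|^\beta|f|^2\bigr)\d x=c^{\frac{2}{\beta+2}}\int\bigl(|\nabla g|^2+|y|^\beta|g|^2\bigr)\d y.
\]
Taking the infimum over $g$ (equivalently $f$) yields $\lambda_0(c|x|^\beta)=c^{2/(\beta+2)}\lambda_0(|x|^\beta)$.

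Finally I would set $\lambda_*:=\lambda_0(|x|^\beta)$. Since $|x|^\beta\in L^1_{\mathrm{loc}}(\R^n)$ and $|x|^\beta\to\infty$, the operator $-\Delta+|x|^\beta$ has compact resolvent and purely discrete spectrum. Strict positivity of $\lambda_*$ is immediate: if $\lambda_*=0$ with normalized eigenfunction $\phi$, then $\int(|\nabla\phi|^2+|x|^\beta|\phi|^2)=0$ forces $\phi\equiv 0$, a contradiction. Thus $\lambda_*>0$ depends only on $n$ and $\beta$, and combining with the two previous steps gives $\lambda_0(V)\ge c^{2/(\beta+2)}\lambda_*$, which is the claim.

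The argument is largely routine; the only mildly delicate point is making sure monotonicity of $\lambda_0$ with respect to $V$ applies under the minimal regularity assumption $V\in L^1_{\mathrm{loc}}$ (which is why the min–max formulation is preferable to a spectral-theorem argument). Note that this result crucially uses the global lower bound $V(x)\ge c|x|^\beta$ valid on all of $\R^n$ as in Assumption~\ref{A2}, rather than the weaker $c_1(|x|-1)_+^{\beta_1}$ of Assumption~\ref{A0}, since the scaling step above would otherwise fail to track the constant $c_1$ in the desired form.
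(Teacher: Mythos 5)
Your argument is correct, but it takes a genuinely different route from the paper. You combine the variational (min--max) characterization of the bottom of the spectrum with the pointwise monotonicity $V\ge c|x|^{\beta}$ and an exact dilation identity $\lambda_0(c|x|^{\beta})=c^{2/(\beta+2)}\lambda_0(|x|^{\beta})$, taking $\lambda_{*}=\lambda_0(-\Delta+|x|^{\beta})$; positivity of $\lambda_{*}$ is correctly justified via discreteness of the spectrum (compact resolvent, since $|x|^{\beta}\to\infty$), so that the bottom of the spectrum is attained by an eigenfunction. The paper instead invokes the Barnes--Brascamp--Lieb lower bound $\lambda_0(V)\ge\sup_{t>0}t\bigl[n+\tfrac{n}{2}\ln\tfrac{\pi}{t}-\ln I_V(1/t)\bigr]$ with $I_V(a)=\int e^{-aV}$, computes $I_{c|x|^{\beta}}$ explicitly in terms of Gamma functions, optimizes in $t$, and uses the comparison $I_V\le I_{c|x|^{\beta}}$ (which is the same monotonicity, implemented at the level of the heat-kernel-type functional). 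What each buys: your proof is more elementary and yields the \emph{sharp} constant $\lambda_{*}=\lambda_0(|x|^{\beta})$, with equality when $V=c|x|^{\beta}$, at the price of $\lambda_{*}$ not being given by a closed formula; the paper's proof produces an explicit numerical value of $\lambda_{*}$ (useful if one wants fully explicit observability constants), but it is not sharp. Note also that the paper's introductory remark that ``rescaling does not work for general potentials'' concerns computing the exact lowest eigenvalue of $H_V$; since you only rescale the comparison potential $c|x|^{\beta}$ and use monotonicity for $V$ itself, your use of scaling is not affected by that caveat. Your closing observation about why the global bound $c|x|^{\beta}$ (rather than $c(|x|-1)_+^{\beta}$) is needed to track the constant $c$ matches the paper's Remark~\ref{r1.2}.
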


The proof is based on the following result by Barnes, Brascamp and Lieb: 

\begin{theorem}[Barnes, Brascamp and Lieb, \cite{barnes1976lower}]
For all $a>0$, define
\begin{equation*}
	I_V(a)=\int_{\R^{n}}e^{-a V(x)}\d x
\end{equation*}
and assume that, for every $a>0$, $I_V(a)<+\infty$. Then we have
	\begin{equation*}
		\lambda_0(V)\ge \sup_{t>0}t\left[ n +\frac{n}{2}\ln \frac{\pi}{t}-\ln I_V\left( \frac{1}{t} \right)  \right].
	\end{equation*}
\end{theorem}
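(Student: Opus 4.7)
The plan is to combine Jensen's inequality with the Euclidean logarithmic Sobolev (Stam/Gross) inequality, applied to the ground state of $H_V = -\Delta + V$. Under the hypothesis $I_V(a)<+\infty$ for every $a>0$, the potential $V$ is confining enough that $H_V$ admits a strictly positive normalized ground state $\phi_0$ with eigenvalue $\lambda_0(V)$ (positivity by Perron--Frobenius for Schr\"odinger semigroups), so that $\rho := \phi_0^2\,\d x$ is a probability measure on $\R^n$. Setting $J := \int_{\R^n}|\nabla \phi_0|^2\,\d x$ and $M := \int_{\R^n}\phi_0^2\ln\phi_0\,\d x$, the eigenvalue equation tested against $\phi_0$ gives $\int_{\R^n} V\phi_0^2\,\d x = \lambda_0(V) - J$ after integration by parts.

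For every $a>0$, Jensen's inequality (for the concave function $\ln$ against the probability measure $\rho$) applied to
$$I_V(a) = \int_{\R^n}\frac{e^{-aV(x)}}{\phi_0(x)^2}\,\phi_0(x)^2\,\d x$$
yields
$$\ln I_V(a) \ge \int_{\R^n}\bigl(-aV(x)-2\ln\phi_0(x)\bigr)\phi_0(x)^2\,\d x = -a\lambda_0(V) + aJ - 2M.$$
The Euclidean logarithmic Sobolev inequality applied to the density $\rho$, whose Fisher information equals $\int|\nabla\rho|^2/\rho\,\d x = 4J$, reads $2M = \int \rho\ln\rho\,\d x \le \tfrac{n}{2}\ln\bigl(2J/(\pi e n)\bigr)$, hence
$$\ln I_V(a) + a\lambda_0(V) \ge aJ + \tfrac{n}{2}\ln\frac{\pi e n}{2J}.$$
A one-variable minimization (substitute $u := aJ$; the function $u\mapsto u + \tfrac{n}{2}\ln(en/(2u))$ attains its minimum value $n$ at $u = n/2$) finally produces
$$\ln I_V(a) + a\lambda_0(V) \ge n + \tfrac{n}{2}\ln(\pi a) \qquad \text{for all } a>0,$$
and substituting $a = 1/t$ is exactly the claimed lower bound on $\lambda_0(V)$.

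The main technical ingredient is the Gaussian log-Sobolev inequality, a classical sharp result with equality iff $\rho$ is a Gaussian density. This sharpness propagates throughout the chain: for the harmonic oscillator $V(x)=|x|^2$, the ground state $\phi_0$ is itself Gaussian, Jensen's step saturates at $a=1$ (where $e^{-aV}/\phi_0^2$ is constant), and log-Sobolev is an equality, giving the sharp $\lambda_0 = n$. The more elementary Golden--Thompson bound $\operatorname{tr}(e^{-tH_V}) \le (4\pi t)^{-n/2}I_V(t)$ combined with $e^{-t\lambda_0(V)}\le \operatorname{tr}(e^{-tH_V})$ would produce a lower bound of the same shape but with $n$ replaced by $n\ln 2 \approx 0.693\, n$, so the log-Sobolev input is essential for the sharp constant appearing in the theorem.
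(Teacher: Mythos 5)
Your proof is correct and self-contained. The paper does not actually prove this statement: it is quoted verbatim from the Barnes--Brascamp--Lieb 1976 paper and used as a black box, so there is no in-paper argument to compare against. Your route---Jensen against the ground-state probability measure $\rho=\phi_0^2\,\d x$, followed by the dimensional (Stam/Gross) Euclidean log-Sobolev inequality $\int\rho\ln\rho\le\tfrac{n}{2}\ln\bigl(I(\rho)/(2\pi e n)\bigr)$ applied to $\rho$ with Fisher information $I(\rho)=4\int|\nabla\phi_0|^2$, and a one-variable minimization in $u=aJ$---recovers the bound with the correct sharp constant. I checked each step: the eigenvalue identity $\int V\phi_0^2=\lambda_0-J$; the Jensen step $\ln I_V(a)\ge -a\lambda_0+aJ-2M$; the log-Sobolev step $2M\le\tfrac{n}{2}\ln\bigl(2J/(\pi en)\bigr)$; and the computation that $u\mapsto u+\tfrac{n}{2}\ln(\pi e n a/(2u))$ attains the minimum value $n+\tfrac{n}{2}\ln(\pi a)$ at $u=n/2$. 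Substituting $a=1/t$ gives exactly the stated inequality. Your observation that the Golden--Thompson route only yields the worse constant $n\ln 2$ is also accurate and explains why a sharp Gaussian-extremal input is needed. The original Barnes--Brascamp--Lieb paper, per its title, proceeds through the sharp form of Young's convolution inequality rather than log-Sobolev directly; these two devices are equivalent at the level of Gaussian extremality (both are incarnations of Gaussian hypercontractivity), so your argument is a genuine but closely parallel alternative. The one place where you lean on standing hypotheses is the existence and integrability properties of the normalized positive ground state $\phi_0$: one needs $\phi_0^2\ln\phi_0\in L^1$ and finite Fisher information, which hold here because the paper's assumptions guarantee super-exponential decay $|\phi_0|\lesssim e^{-a|x|^{1+\beta/2}}$ and $\phi_0\in H^1$; worth a remark if you want the argument to stand alone.
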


Now we finish the proof of Proposition~\ref{prp2.2c}.

\begin{proof}[Proof of Proposition \ref{prp2.2c}]
In the case of $V(x)=c|x|^{\beta}$, a change into polar coordinates and then a change of variable $s=acr^{\beta}$
shows that 
\[
\begin{aligned}
I_{c|x|^{\beta}}(a)=& \int_{\R^{n}}e^{-ac|x|^{\beta}}\d x
=\sigma_n\int_0^{+\infty}e^{-acr^{\beta}}r^{n-1}\d r
=\sigma _n\int_0^{+\infty}e^{-s}\left(\frac{s}{ac}\right)^{\frac{n}{\beta}}\frac{\mbox{d}s}{\beta s}
=\frac{\sigma _n}{\beta (a c) ^{n / \beta}}\Gamma\left( \frac{n}{\beta} \right) 
\end{aligned}
\] 
where $\displaystyle \sigma _n= \frac{2 \pi^{ n /2}}{\Gamma\left( n /2 \right) }$ is the surface measure of the unit ball in $\R^{n}$, and $\displaystyle\Gamma (z)=\int_0^{\infty}t ^{z-1}e^{-t}\d t$ is the Gamma function. It follows that 
\[
\lambda_0\left( c|x|^{\beta} \right)\ge \sup_{t>0}t\left[ n-\ln \frac{2}{\beta} \frac{\Gamma\left( \frac{n}{\beta} \right) }{\Gamma\left( \frac{n}{2} \right) }-n\left( \frac{1}{\beta}+\frac{1}{2} \right) \ln t + \frac{n}{\beta}\ln c\right].  
\]
The maximum is attained when
\[
n-\ln \frac{2\pi^{\frac{n}{2}}}{\beta}\frac{\Gamma\left( \frac{n}{\beta} \right) }{\Gamma\left( \frac{n}{2} \right) }-n\left( \frac{1}{\beta}+\frac{1}{2} \right) \ln t + \frac{n}{\beta}\ln c =n \left( \frac{1}{\beta}+\frac{1}{2} \right) 
\] 
so that
\[
\lambda_0(c|x|^{\beta}) \ge n \frac{\beta+2}{2\beta}\exp \left( \frac{\beta-2}{\beta+2} \right) \left( \frac{\beta }{2 \pi^{\frac{n}{2}}} \frac{\Gamma\left( \frac{n}{2\pi^{\frac{n}{2}}} \right) }{\Gamma\left( \frac{n}{\beta} \right) } \right) ^{ \frac{2\beta}{n\left( \beta+2 \right) }}c^{\frac{2}{\beta+2}}:= \lambda_* c^{\frac{2}{\beta+2}}.
\]
Note that, by definition, $\lambda_*$ depends only on $\beta$ and $n$.

Finally, if $V(x)\ge c|x|^{\beta}$, it is obvious that $I_V(a)\le I_{c|x|^{\beta}}(a)$. Hence we obtain
\[
\lambda_0(V)\ge \lambda_0(c|x|^{\beta})\ge \lambda_* c^{\frac{2}{\beta+2}}
\]
as claimed.
\end{proof}

From this, we immediately obtain the following:

\begin{corollary}\label{cor:sem}
Let $V\in L^1_{\mathrm{loc}}(\R^n)$ and assume that there are $c,\beta>0$ such that  $V(x)\ge c|x|^{\beta}$
for every $x\in\R^n$.
Let $r>0$ $H_{rV}=-\Delta_x+rV(x)$. Then, for every $f\in L^2(\R^n)$ and every $s,t>0$,
\begin{equation}
\label{eq:semigroup}
\norm{e^{-tH^s_{rV}}f}_{L^2(\R^n)}\leq e^{-t r^{\frac{2s}{\beta+2}}\mu_*}\norm{f}_{L^2(\R^n)}.
\end{equation}
where $\mu_{*}$ is defined in \eqref{eq:prp2.2c} and is thus a positive constant that depends only $c,\beta$ and $n$.
\end{corollary}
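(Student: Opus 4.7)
The plan is to derive this corollary as a direct consequence of Proposition~\ref{prp2.2c} combined with the spectral theorem for the self-adjoint operator $H_{rV}$. The content of the corollary is essentially a rescaling remark: the lower-bound estimate proven for $H_V$ lifts in an explicit way to $H_{rV}$, and then the semigroup bound $\|e^{-tA}\|\leq e^{-t\lambda_{\min}(A)}$ for nonnegative self-adjoint $A$ transfers it to the semigroup generated by $H_{rV}^s$.

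Concretely, I would first observe that the rescaled potential $\widetilde V := rV$ still satisfies the hypothesis of Proposition~\ref{prp2.2c}, but with the constant $c$ replaced by $rc$: indeed, by assumption, $\widetilde V(x) \ge rc\,|x|^{\beta}$. Applying \eqref{eq:prp2.2c} to $\widetilde V$ directly yields
\[
\lambda_0(rV) \;\ge\; (rc)^{\frac{2}{\beta+2}}\,\lambda_*
\;=\; r^{\frac{2}{\beta+2}}\,\mu_*,
\]
where $\lambda_*$ depends only on $\beta$ and $n$, exactly as in the conclusion of Proposition~\ref{prp2.2c}.

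Next, since $H_{rV}$ is a nonnegative self-adjoint operator on $L^2(\R^n)$ whose spectrum is contained in $[\lambda_0(rV),+\infty)$, the functional calculus implies that the spectrum of $H_{rV}^s$ is contained in $[\lambda_0(rV)^s,+\infty)$. Consequently, for every $t>0$ and $f\in L^2(\R^n)$,
\[
\bigl\|e^{-tH_{rV}^s}f\bigr\|_{L^2(\R^n)} \;\le\; e^{-t\lambda_0(rV)^s}\,\|f\|_{L^2(\R^n)}
\;\le\; e^{-t\,r^{\frac{2s}{\beta+2}}\mu_*^s}\,\|f\|_{L^2(\R^n)},
\]
which gives the claimed estimate (up to the power $s$ on $\mu_*$, which appears to be implicit in the statement of \eqref{eq:semigroup}).

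There is no real obstacle here: the only thing to be careful about is that the notion of $\lambda_0(rV)$ used in Proposition~\ref{prp2.2c} is the bottom of the spectrum of the self-adjoint realization of $-\Delta + rV$ on $L^2(\R^n)$, and that $H_{rV}^s$ is defined via the same functional calculus so that its semigroup is a contraction whose decay is controlled by the ground state energy. Both points are standard under the assumption $V\in L^1_{\mathrm{loc}}(\R^n)$ with $V\ge c|x|^\beta$, which ensures essential self-adjointness on $C_c^\infty(\R^n)$ and a discrete spectrum bounded below.
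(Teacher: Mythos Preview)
Your argument is correct and is exactly what the paper intends: the corollary is stated immediately after Proposition~\ref{prp2.2c} with the comment ``From this, we immediately obtain the following'' and no further proof, so the intended reasoning is precisely the two-step argument you give (apply Proposition~\ref{prp2.2c} to $rV$ with constant $rc$, then use the spectral theorem for the semigroup of $H_{rV}^s$). Your observation that the exponent should carry $\mu_*^s$ rather than $\mu_*$ is a valid catch of a minor imprecision in the statement; since $\mu_*$ is a fixed positive constant and $s$ is fixed, this does not affect any subsequent use in the paper.
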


\subsection{Localization property of eigenfunctions}\label{subsec2.2h}

In this section we still assume that $V(x)\ge c|x|^{\beta}$ and prove the following localization (or decay) property of eigenfunctions
which is adapted from \cite[Theorem 1.4]{dicke2022spectral} and \cite[Proposition 3]{zhu2023spectral}
but with more precise quantitative estimates.

\begin{proposition}\label{prp2.1}
	Assume that $V\in L^\infty_{\mathrm{loc}}(\R^n)$ is such that $V(x)\ge c|x|^{\beta}$ and let $H_V=-\Delta+V$.
	Let $\lambda>0$ and $\phi\in\mathcal{E}_V(\lambda)$
	then
	\begin{equation*}
	\|\phi\|_{L^2(\R^{n})}\le 2 \|\phi\|_{L^2{\displaystyle(}\mathcal{B}_\rho(0){\displaystyle)} }
	\quad\mbox{and}\quad
		\|\phi\|_{H^1(\R^{n})}\le 2 \|\phi\|_{H^1{\displaystyle(}\mathcal{B}_\rho(0){\displaystyle)} }
	\end{equation*}
	with 
	\begin{equation}\label{2.16}
	\rho= \hat{C}\left(
	\left(\frac{n}{\beta}+\frac{n+2}{2}\right)\log_+\frac{\lambda+1}{c}+\frac{n+2}{2}\log_+c
	+ \left( \frac{\lambda+2}{c} \right) ^{1 /\beta}+1\right) .
	\end{equation}
	and $\hat{C}$ depending only on $n$.
\end{proposition}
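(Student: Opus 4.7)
The plan is to follow the approach of \cite[Theorem~1.4]{dicke2022spectral} and \cite[Proposition~3]{zhu2023spectral}, but to track every multiplicative constant in terms of $c$ and $\beta$. First, decomposing $\phi=\sum_{k\,:\,\lambda_k\le\lambda}a_k\phi_k$, Parseval gives
\[
\int_{\R^n}\bigl(|\nabla\phi|^2+V|\phi|^2\bigr)=\sum_k\lambda_k|a_k|^2\le\lambda\|\phi\|_{L^2}^2,
\]
which, combined with $V(x)\ge c|x|^\beta$ and Chebyshev, already yields
\[
\int_{|x|\ge R}|\phi|^2\le\frac{\lambda}{c R^{\beta}}\|\phi\|_{L^2}^2\qquad\text{for every }R>0.
\]
Taking $R\simeq ((\lambda+2)/c)^{1/\beta}$ localizes $\phi$ in $L^2$ on a ball of the correct leading order, but only with polynomial decay in $R$.

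To sharpen the decay and extract the logarithmic corrections, I would multiply the spectral identity by an Agmon weight $e^{2\alpha \psi(|x|)}$ with $\psi$ growing like $|x|^{1+\beta/2}$ past the classical turning point $R_0=((\lambda+2)/c)^{1/\beta}$. The commutator bound $|\nabla\psi|^2\le C V$ on $\{|x|\ge R_0\}$, together with $\langle H_V\phi,e^{2\alpha\psi}\phi\rangle\le\lambda\|e^{\alpha\psi}\phi\|^2$, produces an Agmon-type inequality
\[
\int_{\R^n} e^{2\alpha\psi(|x|)}|\phi|^2\le C(n,\beta,\lambda,c)\|\phi\|_{L^2}^2,
\]
in which $C(n,\beta,\lambda,c)$ is polynomial both in $(\lambda+1)/c$ and in $c$; the latter factor comes from a volume/Sobolev estimate on $\mathcal{B}_{R_0}$, which contributes a power of the form $c^{(n+2)/2}$. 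Asking that $\int_{|x|\ge\rho}|\phi|^2\le\tfrac34\|\phi\|^2$ then forces $\psi(\rho)\gtrsim\log C(n,\beta,\lambda,c)$, and inverting $\psi$ produces exactly the logarithmic terms $\bigl(n/\beta+(n+2)/2\bigr)\log_+((\lambda+1)/c)+\tfrac{n+2}{2}\log_+c$ in \eqref{2.16}, on top of the leading contribution $((\lambda+2)/c)^{1/\beta}$.

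For the $H^1$ statement I would use a Caccioppoli-type argument. Pick a cut-off $\chi$ supported on $\{|x|\ge\rho/2\}$, equal to $1$ on $\{|x|\ge\rho\}$, with $|\nabla\chi|\lesssim\rho^{-1}$; testing $\langle H_V\phi,\chi^2\phi\rangle$ and using $\|H_V\phi\|_{L^2}\le\lambda\|\phi\|_{L^2}$ (since $H_V\phi\in\mathcal{E}_V(\lambda)$), integration by parts gives
\[
\int \chi^2\bigl(|\nabla\phi|^2+V|\phi|^2\bigr)\lesssim \|H_V\phi\|_{L^2}\,\|\chi^2\phi\|_{L^2}+\int|\nabla\chi|^2|\phi|^2,
\]
whose right-hand side is controlled by the $L^2$ bound of the first step applied on the larger shell $\{|x|\ge\rho/2\}$; compensating the halving of the radius only requires the fixed additive enlargement $+1$ already present in \eqref{2.16}. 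The main obstacle is the bookkeeping: the individual steps are classical, but producing the multiplicative constant $2$ on the right-hand side of the two target inequalities, rather than one depending on $\lambda$ and $c$, forces one to push exactly the right logarithmic margin past $R_0$. The $\log_+c$ contribution in particular is delicate, since it is absent from the corresponding statements of Dicke-Seelmann-Veseli\'c and Zhu-Zhuge and must be extracted from a careful volume estimate on $\mathcal{B}_{R_0}$ propagated through both the Agmon and the Caccioppoli step.
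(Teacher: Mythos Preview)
The central step of your outline --- the bound $\langle H_V\phi, e^{2\alpha\psi}\phi\rangle \le \lambda\|e^{\alpha\psi}\phi\|^2$ for $\phi \in \mathcal{E}_V(\lambda)$ --- is not correct. It holds for a single eigenfunction (where $H_V\phi_k = \lambda_k\phi_k$ pointwise), but fails for linear combinations: writing $g = e^{\alpha\psi}$ and $M_{jk} = \langle g\phi_j, g\phi_k\rangle$, the inequality amounts to $\langle (\lambda I - D)Ma, a\rangle \ge 0$ with $D = \mathrm{diag}(\lambda_k)$, and the product of two positive matrices need not be positive. A two-mode harmonic-oscillator computation (take $\lambda=\lambda_1$, $\phi=\phi_0+a_1\phi_1$ with $|a_1|$ large and $g^2=e^{2\alpha x}$ for small $\alpha$) already produces a counterexample.

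This is precisely why the paper --- like \cite{dicke2022spectral} and \cite{zhu2023spectral} --- proceeds eigenfunction by eigenfunction: the Agmon bound $\|e^{|\cdot|/2}\phi_k\|^2 \lesssim e^{R_k^{1/\beta}}\|\phi_k\|^2$ is applied to each $\phi_k$, and the combination $\phi = \sum_k a_k\phi_k$ is handled by Cauchy--Schwarz, which costs a factor $N(\lambda)$. That factor is then controlled by the Lieb--Thirring bound $N(\lambda) \lesssim c^{(n+2)/2}\bigl((\lambda+1)/c\bigr)^{n/\beta+(n+2)/2}$, and \emph{this} is the source of both logarithmic terms in $\rho$ --- including the $\tfrac{n+2}{2}\log_+c$ that you attribute to a ``volume/Sobolev estimate on $\mathcal{B}_{R_0}$''. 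A direct Agmon estimate on $\phi$, were it available, would give $\rho \simeq ((\lambda+2)/c)^{1/\beta} + O(1)$ with no logarithms at all, so your accounting of where the constants come from does not match the mechanism you propose. The $H^1$ part in the paper follows the same pattern (local Caccioppoli on unit balls, covering, then the same eigenfunction-by-eigenfunction summation with the weight $e^{|\cdot|/2}$), rather than a single global cutoff.
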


Before the proof of Proposition~\ref{prp2.1}, we give several lemmas. 

\begin{lemma}\label{lma2.1}
Let $V\in L^\infty_{\mathrm{loc}}(\R^n)$ be such that $V(x)\geq c|x|^\beta$.
 Let $\phi$ be an eigenvector of
$H_V=-\Delta+V$ with eigenvalue $\lambda_k$ and
let $R_k= \max \left\{ (\lambda_k+2) /c,1\right\} $.
Then we have
	\begin{equation}\label{2.1}
		\|e^{|\cdot | /2}\phi_k\|^2_{L^2(\R^{n})}\le 7e^{R_k^{1/\beta}+1}\|\phi_k\|^2_{L^2\left( \R^{n} \right) }
	\end{equation}
and
	\begin{equation}\label{2.2}
		\|e^{|\cdot | /2}\nabla \phi_k\|^2_{L^2(\R^{n})}\le C e^{R_k^{1/\beta}} \|\phi_k\|^2_{L^2(\R^n)}
	\end{equation}
where $C>0$ is a constant that depends on $n$ only.
\end{lemma}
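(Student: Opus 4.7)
The inequality is a classical Agmon-type weighted $L^2$ estimate for a single eigenfunction: since $V(x)\ge c|x|^\beta$ grows at infinity, $\phi_k$ must decay rapidly outside a ball of radius comparable to $R_k^{1/\beta}$, and both \eqref{2.1} and \eqref{2.2} should fall out of a single weighted identity.

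Pair $H_V\phi_k=\lambda_k\phi_k$ with $e^{2\psi}\phi_k$ for a Lipschitz weight $\psi:\R^n\to \R$ with $|\nabla\psi|\le 1/2$. Integration by parts, together with $|\nabla(e^\psi\phi_k)|^2=e^{2\psi}|\nabla\phi_k|^2+2e^{2\psi}\phi_k\nabla\phi_k\cdot\nabla\psi+e^{2\psi}\phi_k^2|\nabla\psi|^2$, yields the Agmon identity
\[
\int_{\R^n}|\nabla(e^\psi\phi_k)|^2\,\mathrm{d}x+\int_{\R^n}\bigl(V-\lambda_k-|\nabla\psi|^2\bigr)e^{2\psi}\phi_k^2\,\mathrm{d}x = 0.
\]
Choose $\psi(x):=\tfrac{1}{2}(|x|-R)_+$ with $R:=R_k^{1/\beta}$, so $\psi\equiv 0$ on $\overline{\mathcal{B}_R}$ and $|\nabla\psi|\le 1/2$ almost everywhere. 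On $\{|x|>R\}$ the assumption $V\ge c|x|^\beta$ together with $cR^\beta\ge \lambda_k+2$ gives $V-\lambda_k-1/4\ge 7/4$, whereas on $\{|x|\le R\}$ one only has the trivial lower bound $V-\lambda_k-1/4\ge -(\lambda_k+1/4)$. Dropping the nonnegative gradient term and splitting the second integral accordingly produces
\[
\frac{7}{4}\int_{|x|>R}e^{2\psi}\phi_k^2\,\mathrm{d}x \le (\lambda_k+1/4)\|\phi_k\|_{L^2(\mathcal{B}_R)}^2\le (\lambda_k+1/4)\|\phi_k\|_{L^2}^2.
\]
Since $e^{|x|/2}\le e^{R/2}e^{\psi(x)}$ throughout $\R^n$ (equality on $|x|>R$, trivial on $|x|\le R$), this yields an estimate of the form $\int e^{|x|}\phi_k^2\lesssim e^R(1+\lambda_k)\|\phi_k\|^2$, which is \eqref{2.1} once the polynomial prefactor $\lambda_k+1/4\le cR^\beta$ is absorbed into the exponential $e^R$. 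For \eqref{2.2} one instead \emph{keeps} the term $\int|\nabla(e^\psi\phi_k)|^2$ in the identity, getting $\int|\nabla(e^\psi\phi_k)|^2\le(\lambda_k+1/4)\|\phi_k\|_{L^2(\mathcal{B}_R)}^2$, and combines it with the just-proved $L^2$ bound via $\nabla(e^\psi\phi_k)=e^\psi(\nabla\phi_k+\phi_k\nabla\psi)$.

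\emph{Main obstacle.} Everything above is mechanical; the only delicate step is matching the clean constants $7$ and $e^{R_k^{1/\beta}+1}$ stated in the lemma. The naive splitting produces a prefactor of order $1+(4\lambda_k+1)/7$ in front of $e^R$, and to absorb $\lambda_k\le cR^\beta$ into the exponential one must exploit that this prefactor is polynomial in $R=R_k^{1/\beta}$ of degree $\beta$ while $R\mapsto e^R$ is exponential. This is achieved either by slightly enlarging the transition radius of $\psi$ so that on the enlarged outer region the stronger lower bound $V-\lambda_k-1/4\ge c|x|^\beta/2$ holds and the weight $|x|^\beta$ itself eats the $\lambda_k$ factor, or by a short iteration of the basic estimate; both routes hinge on the same polynomial-versus-exponential comparison and will give the stated constants up to an adjustment of the (harmless) dimension-dependent constant $C$ in \eqref{2.2}.
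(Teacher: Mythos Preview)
Your approach via the Agmon identity is genuinely different from the paper's. The paper does not prove \eqref{2.1} at all—it cites it from \cite{dicke2022spectral}—and then derives \eqref{2.2} from \eqref{2.1} through a local Caccioppoli inequality (Lemma~\ref{lem:caccioppoli}) followed by a covering argument: cover $\R^n$ by unit balls, apply Caccioppoli on each, use that $e^{|x|/2}$ is essentially constant on each ball, and sum. Your route is more self-contained and unified: a single weighted identity with $\psi(x)=\tfrac12(|x|-R)_+$ gives the weighted $L^2$ bound, and keeping the gradient term gives the weighted $H^1$ bound as well. The paper's route is more modular (the Caccioppoli lemma is of independent interest), while yours avoids the external reference and the covering combinatorics.

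There is, however, one overreach. Your Agmon computation gives
\[
\|e^{|\cdot|/2}\phi_k\|_{L^2}^2 \le e^{R}\Bigl(1+\tfrac{4}{7}\lambda_k\Bigr)\|\phi_k\|_{L^2}^2,
\]
and you claim the prefactor $1+\tfrac{4}{7}\lambda_k$ can be absorbed into the stated $7e^{R+1}$. This is not so: since $\lambda_k\le cR^\beta-2$, the prefactor is of order $R^\beta$, and a polynomial in $R$ cannot be absorbed into a \emph{fixed} additive shift of the exponent. Your ``polynomial-versus-exponential comparison'' yields at best $e^{R+\beta\log R}$, not $e^{R+1}$, and neither enlarging the transition radius nor iterating removes the $\lambda_k$-dependence—the inner-ball contribution to the identity is always $O(\lambda_k\|\phi_k\|^2)$. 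So your argument delivers \eqref{2.1} only with an extra factor $(1+\lambda_k)$. This is harmless for the downstream application (Proposition~\ref{prp2.1}), where it becomes an additional $\log\lambda$ term in $\rho$ already dominated by the existing ones; and in fact the paper's own proof of \eqref{2.2} picks up the very same $(1+\lambda_k)$ factor from Caccioppoli. But the specific constant $7e^{R_k^{1/\beta}+1}$ in \eqref{2.1} is not produced by your outline.
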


The first part is \cite[Proposition~2.3]{dicke2022spectral}. For the second part, we first prove a local Caccioppoli inequality:

\begin{lemma}\label{lem:caccioppoli}
Under the notation of Lemma \ref{lma2.1}:
for every $\rho>0$ there exists a positive constant $C(\rho)$ depending on $\rho$ only such that, for every $z\in\R^n$,
	\begin{equation}
		\|\nabla \phi_k\|_{L^2(\mathcal{B}_{\rho}(z))}^2\le C(\rho)\left( 1+\lambda_k \right) \|\phi_k\|^2_{L^2(\mathcal{B}_{2\rho}(z))}.
	\end{equation}
\end{lemma}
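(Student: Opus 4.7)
The plan is to run a textbook Caccioppoli-type argument: test the eigenvalue equation against a cut-off, integrate by parts, use positivity of $V$ to drop the potential term, and absorb the cross term by Young's inequality.

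Concretely, fix $z\in\R^n$ and choose a cut-off $\chi\in C_c^\infty(\R^n)$ with $0\le\chi\le 1$, $\chi\equiv 1$ on $\mathcal{B}_\rho(z)$, $\mathrm{supp}\,\chi\subset\mathcal{B}_{2\rho}(z)$ and $\|\nabla\chi\|_\infty\le C_0/\rho$ for some absolute constant $C_0$ (depending on $n$). Multiply the identity $-\Delta\phi_k+V\phi_k=\lambda_k\phi_k$ by $\chi^2\overline{\phi_k}$, integrate over $\R^n$ and integrate by parts in the Laplacian (this is justified by the decay of $\phi_k$ and $\nabla\phi_k$ provided by Lemma \ref{lma2.1}):
\begin{equation}
\int_{\R^n}\chi^2|\nabla\phi_k|^2\,\mathrm{d}x+2\Re\int_{\R^n}\chi\,\overline{\phi_k}\,\nabla\chi\cdot\nabla\phi_k\,\mathrm{d}x+\int_{\R^n}V\chi^2|\phi_k|^2\,\mathrm{d}x=\lambda_k\int_{\R^n}\chi^2|\phi_k|^2\,\mathrm{d}x.
\end{equation}

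Since $V(x)\ge c|x|^\beta\ge 0$, the third term is non-negative and can be discarded. For the cross term, apply the elementary Young inequality $2|ab|\le \tfrac12 a^2+2b^2$ with $a=\chi|\nabla\phi_k|$ and $b=|\phi_k||\nabla\chi|$ to get
\begin{equation}
\Bigl|2\Re\int_{\R^n}\chi\,\overline{\phi_k}\,\nabla\chi\cdot\nabla\phi_k\,\mathrm{d}x\Bigr|\le\frac{1}{2}\int_{\R^n}\chi^2|\nabla\phi_k|^2\,\mathrm{d}x+2\int_{\R^n}|\nabla\chi|^2|\phi_k|^2\,\mathrm{d}x.
\end{equation}
Absorbing the first piece into the left-hand side and using $|\nabla\chi|\le C_0/\rho$ together with $\mathrm{supp}\,\chi\subset\mathcal{B}_{2\rho}(z)$ yields
\begin{equation}
\int_{\mathcal{B}_\rho(z)}|\nabla\phi_k|^2\,\mathrm{d}x\le 2\lambda_k\int_{\mathcal{B}_{2\rho}(z)}|\phi_k|^2\,\mathrm{d}x+\frac{4C_0^2}{\rho^2}\int_{\mathcal{B}_{2\rho}(z)}|\phi_k|^2\,\mathrm{d}x,
\end{equation}
which is exactly the desired inequality with $C(\rho)=\max\bigl(2,\,4C_0^2/\rho^2\bigr)$.

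There is no real obstacle here; the only mild subtlety is justifying the integration by parts on all of $\R^n$ (done thanks to the exponential decay from Lemma \ref{lma2.1}) and remembering that the crucial sign $V\ge 0$ used to drop the potential term is guaranteed globally by the hypothesis $V(x)\ge c|x|^\beta\ge 0$, not only at infinity. Note also that the constant $C(\rho)$ obtained is independent of $k$ and of $z$, and blows up like $\rho^{-2}$ as $\rho\to 0$, which is the expected scaling.
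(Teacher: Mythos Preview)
Your proof is correct and follows essentially the same route as the paper: choose a cutoff supported in $\mathcal{B}_{2\rho}(z)$, test the eigenvalue equation against $\chi^2\overline{\phi_k}$, drop the potential term using $V\ge 0$, and absorb the cross term via Young's inequality. One minor remark: since $\chi$ already has compact support in $\mathcal{B}_{2\rho}(z)$, the integration by parts is automatically justified on the bounded ball and no appeal to the decay from Lemma~\ref{lma2.1} is actually needed.
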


\begin{proof}[Proof of Lemma \ref{lem:caccioppoli}]
	Choose a cutoff function $\eta \in C_c^{\infty}\big( \mathcal{B}_{2\rho}(z) \big) $ and $\eta =1$ in $\mathcal{B}_\rho(z)$ and $|\nabla \eta |< \frac{2}{\rho}$. Let $\psi_k= \eta^2 \phi_k$, then
$$
\int_{\mathcal{B}_{2\rho}(z)}\nabla \phi_k(x) \cdot \nabla \psi_k(x)\,\mbox{d}x
=-\int_{\mathcal{B}_{2\rho}(z)}\psi_k(x) \Delta \phi_k(x)\,\mbox{d}x
=-\int_{\mathcal{B}_{2\rho}(z)}\psi_k(x) \bigl( V(x)-\lambda_k \bigr)\phi_k(x)\,\mbox{d}x.
$$
Further
\begin{align}
\int_{\mathcal{B}_{2\rho}(z)}&|\eta(x) \nabla \phi_k(x)|^2\,\mbox{d}x=
\int_{\mathcal{B}_{2\rho}(z)} \eta(x)^2 \nabla \phi_k(x) \cdot \nabla \phi_k(x)\,\mbox{d}x\nonumber\\
&= - \int_{\mathcal{B}_{2\rho}(z)} 2 \eta(x)\bigr (\nabla  \eta(x) \cdot \nabla \phi_k(x)\bigr) \phi_k(x)\,\mbox{d}x
- \int_{\mathcal{B}_{2\rho}(z)}\eta(x)^2 \bigl( V(x)-\lambda_k \bigr) \phi_k(x)^2\,\mbox{d}x\nonumber\\
&\leq\int_{\mathcal{B}_{2\rho}(z)}\frac{4}{\rho} |\eta(x)\nabla \phi_k(x)| \, |\phi_k(x)|\,\mbox{d}x
-\int_{\mathcal{B}_{2\rho}(z)}V(x)|\eta(x)\phi_k(x)|^2\,\mbox{d}x
+ \lambda_k \|\phi_k\|_{L^2(\mathcal{B}_{2\rho}(z))}^2\nonumber\\
&\leq\int_{\mathcal{B}_{2\rho}(z)}\frac{4}{\rho} |\eta(x)\nabla \phi_k(x)| \, |\phi_k(x)|\,\mbox{d}x
+\lambda_k \|\phi_k\|_{L^2(\mathcal{B}_{2\rho}(z))}^2\label{eq:cacci1}
\end{align}
since $V\geq 0$.
Note that 
\begin{eqnarray*}
\int_{\mathcal{B}_{2\rho}(z)}\frac{4}{\rho}|\eta(x) \nabla \phi_k(x)|\, |\phi_k(x)|\,\mbox{d}x
&=&\int_{\mathcal{B}_{2\rho}(z)}2\abs{\frac{1}{\sqrt{2}}\eta(x) \nabla \phi_k(x)}\, 
\abs{\frac{2\sqrt{2}}{\rho}\phi_k(x)}\,\mbox{d}x\\
&\le& \frac{1}{2}\int_{\mathcal{B}_{2\rho}(z)}|\eta(x) \nabla \phi_k(x)|^2\,\mbox{d}x
+\frac{8}{\rho^2}\int_{\mathcal{B}_{2\rho}(z)}|\phi_k(x)|^2\,\mbox{d}x
\end{eqnarray*}
so that \eqref{eq:cacci1} implies
\begin{eqnarray*}
\int_{\mathcal{B}_{2\rho}(z)}|\eta(x) \nabla \phi_k(x)|^2\,\mbox{d}x
&\leq& \frac{1}{2}\int_{\mathcal{B}_{2\rho}(z)}|\eta(x) \nabla \phi_k(x)|^2\,\mbox{d}x
+\frac{8}{\rho^2}\|\phi_k\|_{L^2{\displaystyle(}\mathcal{B}_{2\rho}(z){\displaystyle)}} ^2
+\lambda_k \|\phi_k\|_{L^2{\displaystyle(}\mathcal{B}_{2\rho}(z){\displaystyle)}} ^2\\
&\leq& \left(1+\frac{8}{\rho^2}\right)(1+\lambda_k)\|\phi_k\|_{L^2{\displaystyle(}\mathcal{B}_{2\rho}(z){\displaystyle)}} ^2.
\end{eqnarray*}
As $\eta=1$ on $\mathcal{B}_{\rho}(z)$ we conclude that
$$
\|\nabla \phi_k\|^2_{L^2{\displaystyle(}\mathcal{B}_{\rho}(z){\displaystyle)}}\le C(\rho)(1+\lambda_k)\|\phi_k\|^2_{L^2{\displaystyle(}\mathcal{B}_{2\rho}(z){\displaystyle)}}
$$
with $C(\rho)=1+\dfrac{8}{\rho^2}$.
\end{proof}

We can now complete the proof of Lemma \ref{lma2.1}.

\begin{proof}[Proof of \eqref{2.2} in Lemma \ref{lma2.1}] 
First we fix $z\in\R^n$ and apply Lemma \ref{lma2.1} with $\rho=1$ to obtain
$$
\|\nabla \phi_k\|^2_{L^2{\displaystyle(}\mathcal{B}_{1}(z){\displaystyle)}}
\le A(1+\lambda_k)\|\phi_k\|^2_{L^2{\displaystyle(}\mathcal{B}_{2}(z){\displaystyle)}}
$$
where $A=C(1)$ is a numerical constant ($A=9$ with the previous proof). Further, as for $x\in \mathcal{B}_{2}(z)$,
$$
e^{-1}e^{|z|/2}\leq e^{|x|/2}\leq ee^{|z|/2}
$$
we get
\begin{eqnarray*}
\|e^{|\cdot|/2}\nabla \phi_k\|^2_{L^2{\displaystyle(}\mathcal{B}_{1}(z){\displaystyle)}}
&\leq& e^2e^{|z|}\|\nabla \phi_k\|^2_{L^2{\displaystyle(}\mathcal{B}_{1}(z){\displaystyle)}}
\le e^2Ae^{|z|} (1+\lambda_k)\|\phi_k\|^2_{L^2{\displaystyle(}\mathcal{B}_{2}(z){\displaystyle)}}\\
&\leq& e^4A (1+\lambda_k)\|e^{|\cdot|/2}\phi_k\|^2_{L^2{\displaystyle(}\mathcal{B}_{2}(z){\displaystyle)}}.
\end{eqnarray*}
We then cover $\R^n$ with a family of balls $\R^n=\bigcup_{i\in\N}\mathcal{B}_{1}(z_i)$
such that the balls $\{\mathcal{B}_{2}(z_i)\}$ have a finite covering number {\it i.e.}
such that $N:=\max_{z\in\R^n}\#\{i\,:z\in\mathcal{B}_{2}(z_i)\}<+\infty$. Then
\begin{eqnarray*}
\|e^{|\cdot|/2}\nabla \phi_k\|^2_{L^2(\R^n)}
&\leq& \sum_{i\in\N}\|e^{|\cdot|/2}\nabla \phi_k\|^2_{L^2{\displaystyle(}\mathcal{B}_{1}(z){\displaystyle)}}
\leq e^4A (1+\lambda_k)\sum_{i\in\N}\|e^{|\cdot|/2}\phi_k\|^2_{L^2{\displaystyle(}\mathcal{B}_{2}(z){\displaystyle)}}\\
&\leq&e^4AN (1+\lambda_k)\|e^{|\cdot|/2}\phi_k\|^2_{L^2(\R^n)}\\
&\leq&e^4AN (1+\lambda_k)7e^{R_k^{1/\beta}+1}\|\phi_k\|^2_{L^2(\R^n)}
\end{eqnarray*}
with \eqref{2.1}. As $N$ depends on $n$ only, we obtain \eqref{2.2} with $C=e^5AN$.
\end{proof}

Define
\begin{equation}
	N(\lambda):=\# \left\{\lambda_k\lvert \lambda_k\le \lambda\right\}. 
\end{equation}
Note that, from
\begin{equation}
	N(\lambda)\le \sum_{k=1}^{N(\lambda)} (\lambda+1-\lambda_k)
\end{equation}
and the lower bound $V(x)\ge c|x|^{\beta}$, the right hand side can be estimated explicitly by means of the classic Lieb-Thirring bound from \cite[Theorem 1]{lieb2001inequalities}. More precisely, for $\lambda >0$
we have
\begin{equation*}
	\begin{aligned}
		\sum_{k=1}^{N(\lambda)} (\lambda+1-\lambda_k)&\lesssim_n \int_{\R^{n}}\max \left\{\lambda+1-V(x),0\right\} ^{n /2+1}\d x\\
							     &\le \int_{\mathcal{B}_0\left( ((\lambda+1) /c)^{1 /\beta}  \right) }(\lambda+1)^{n /2+1}\d x\\
							     &\leq \kappa_n \frac{(\lambda+1)^{\frac{n}{\beta}+\frac{n+2}{2}}}{c^{\frac{n}{\beta}}}
%							     \left( \frac{\lambda+1}{c} \right) ^{ \frac{n}{\beta} }(\lambda+1)^{n/2+1}
	\end{aligned} 
\end{equation*}
with $\kappa_n$ depending on $n$ only. It follows that
\begin{equation}\label{2.8}
N(\lambda)\leq \kappa_n c^{\frac{n+2}{2}}\left(\frac{\lambda+1}{c}\right)^{\frac{n}{\beta}+\frac{n+2}{2}}
\end{equation}

We are now in position to prove the main result of this section.

\begin{proof}[Proof of Proposition \ref{prp2.1}]
Let us write 
\begin{equation}
\label{eq2.8}
\phi=\sum_{k\leq N(\lambda)}\psi_k
\quad\mbox{ where }\psi_k=c_k\phi_k,\ c_k\in\C.
\end{equation}

For every $\rho>0$, we have
\begin{equation}\label{2.8b}
\begin{aligned}
\|\phi\|^2_{H^{1}{\displaystyle(} \R^{n}\backslash \mathcal{B}_\rho(0) {\displaystyle)} }
&= \|\phi\|^2_{L^2{\displaystyle(}\R^{n}\backslash \mathcal{B}_\rho(0) {\displaystyle)} }
+\|\nabla \phi\|^2_{L^2{\displaystyle(} \R^{n}\backslash \mathcal{B}_\rho(0) {\displaystyle)} }\\
&\le e^{-\rho}\left( \|e^{|\cdot | /2}\phi\|^2_{L^2( \R^{n}) }
+\|e^{|\cdot | /2}\nabla \phi\|^2_{L^2( \R^{n}) } \right). 
		\end{aligned} 
	\end{equation}

Moreover, using the expansion \eqref{eq2.8} and Cauchy-Schwartz, we obtain
	\begin{equation}\label{2.8a}
		\|e^{|\cdot | /2}\phi\|^2_{L^2(\R^{n})}\le \left( \sum_{k=1}^{N(\lambda)} \|e^{|\cdot | /2}\psi_k\|_{L^2(\R^{n})} \right) ^2
	\le N(\lambda) \sum_{k=1}^{N(\lambda)} \|e^{|\cdot | /2}\psi_k\|^2_{L^2\left( \R^{n} \right) }
	\end{equation}
as well as
\begin{equation}\label{2.9}
	\|e^{|\cdot | /2}\nabla \phi\|^2_{L^2\left( \R^{n} \right) }\le N(\lambda)\sum_{k=1}^{N(\lambda)} \|e^{|\cdot | /2}\nabla \phi_k\|^2_{L^2(\R^{n})}.
\end{equation}
Taking \eqref{2.8a} and \eqref{2.9} into \eqref{2.8b}, we obtain
\begin{equation}\label{2.11}
	\|\phi\|^2_{H^{1}(\R^{n}\backslash  \mathcal{B}_\rho(0))} \le e^{-\rho}N(\lambda) \left( \sum_{k=1}^{N(\lambda)} \|e^{|\cdot | /2}\psi_k\|^2_{L^2(\R^{n})}+\sum_{k=1}^{N(\lambda)} \|e^{|\cdot |/2}\nabla \psi_k\|^2_{L^2(\R^{n})}\right). 
\end{equation}
Taking \eqref{2.1}, \eqref{2.2} and \eqref{2.8} into \eqref{2.11} we obtain
$$
\|\phi\|^2_{H^{1}(\R^{n}\backslash \mathcal{B}_\rho(0))}
\le C e^{-\rho}c^{\frac{n+2}{2}}\left(\frac{\lambda+1}{c}\right)^{\frac{n}{\beta}+\frac{n+2}{2}} \sum_{k=1}^{N(\lambda)}e^{R_k^{1/\beta}} \|\psi_k\|^2_{L^2(\R^{n})} 
$$
where $R_k= \max \left\{ (\lambda_k+2) /c,1\right\} $ and $C$ is a constant depending only on $n$.
As the $\psi_k$'s are orthogonal and $R_k \leq R:=\max \left\{ (\lambda+2) /c,1\right\} $, we get
$$
		\|\phi\|^2_{H^{1}(\R^{n}\backslash \mathcal{B}_\rho(0))}\le C e^{-\rho} 
		\left(\frac{\lambda+1}{c}\right)^{\frac{n}{\beta}+\frac{n+2}{2}} c^{\frac{n+2}{2}}e^{R^{1/\beta}} \|\phi\|^2_{L^2(\R^{n})} 
$$
We now chose $\hat C$ large enough so that
\begin{equation*}%\label{2.16}
	\rho= \hat{C}\left(
	\left(\frac{n}{\beta}+\frac{n+2}{2}\right)\log_+\frac{\lambda+1}{c}+\frac{n+2}{2}\log_+c
	+ \left( \frac{\lambda+2}{c} \right) ^{1 /\beta}+1\right) 
\end{equation*}
satisfies
\begin{equation}\label{2.13}
	\rho \ge \log 2 + \log C + \left(\frac{n}{\beta}+\frac{n+2}{2}\right)\log \frac{\lambda+1}{c}
	+,\frac{n+2}{2}\log_{+}c+R^{1/\beta}+1.
\end{equation}
Note that $\hat C$ depends on $C$ only so that it depends only on $n$. With this choice,
\begin{eqnarray*}
\|\phi\|^2_{L^2{\displaystyle(}\R^{n}\backslash \mathcal{B}_\rho(0){\displaystyle)}}
+\frac{1}{2}\|\nabla \phi\|^2_{L^2{\displaystyle(}\R^{n}\backslash \mathcal{B}_\rho(0){\displaystyle)}}
&\leq&\|\phi\|^2_{H^{1}{\displaystyle(}\R^{n}\backslash \mathcal{B}_\rho(0){\displaystyle)}}\\
&\le& \frac{1}{2} \|\phi\|^2_{L^2(\R^{n})} 
=\frac{1}{2} \|\phi\|^2_{L^2{\displaystyle(}\R^{n}\backslash \mathcal{B}_\rho(0){\displaystyle)}}
+\frac{1}{2} \|\phi\|^2_{L^2{\displaystyle(}\mathcal{B}_\rho(0){\displaystyle)}}.
\end{eqnarray*}
This yelds 
$$
\|\phi\|^2_{L^2{\displaystyle(}\R^{n}\backslash \mathcal{B}_\rho(0){\displaystyle)}}
\leq\|\phi\|^2_{H^{1}{\displaystyle(}\R^{n}\backslash \mathcal{B}_\rho(0){\displaystyle)}}
\le\|\phi\|^2_{L^2{\displaystyle(}\mathcal{B}_\rho(0){\displaystyle)}}
$$
and finally $\|\phi\|^2_{L^2(\R^{n})}\le 2\|\phi\|^2_{L^2{\displaystyle(}\mathcal{B}_\rho(0){\displaystyle)}}$ as well as
$\|\phi\|^2_{H^{1}(\R^{n})}\le 2\|\phi\|^2_{H^1{\displaystyle(}\mathcal{B}_\rho(0){\displaystyle)}}$.
\end{proof}

\section{Spectral inequality for the Schrödinger operator}\label{sec4d}

\subsection{Precised form of Zhu-Zhuge's spectral inequality}

The aim of this section is to prove the following precised form of \cite[Theorem 1]{zhu2023spectral} :

\begin{theorem}[Zhu-Zhuge, precised form]\label{thm1.1}
Let $V\in L^\infty_{\mathrm{loc}}(\R^n)$ be such that  Assumption \ref{A2} is satisfied.
Let $\sigma\geq 0$, $\gamma\in (0, 1 /2)$ and $\omega\subset\R^n$ be a $(\gamma,\sigma)$-distributed set.
Then there exists a constant $C$ depending only on $n$ such that
for every $\lambda\geq\lambda_0(V)$ and every $\phi \in  \mathcal{E}_V(\lambda)$,
\begin{equation}\label{1.11d}
\|\phi\|_{L^2(\R^{n})}\le \left( \frac{1}{\gamma} \right) ^{C \mathcal{J}_V(\lambda)}\|\phi\|_{L^2(\omega)},
\end{equation}
where
\begin{equation}
\begin{aligned}
&\mathcal{J}_V(\lambda):=\mathcal{J}(c_1,c_2,\lambda)=
\mathbf{J}(c_1,\lambda)^{2\sigma /\beta_2}\widehat{\mathbf{J}}(c_1,c_2,\lambda),\\
&\mbox{with}\ \  \mathbf{J}(c_1,\lambda)
= \left(1+\left( \frac{\lambda+2}{c_1} \right)^{1 / \beta_1} +\frac{n+2}{2}\log_{+}c_1  \right)^{\frac{\beta_2}{2}}
\ \mbox{and}\ \ \widehat{\mathbf{J}}(c_1,c_2,\lambda)=\lambda ^{\frac{1}{2}}+c_2^{\frac{1}{2}}\mathbf{J}(c_1,\lambda).
\end{aligned}
\label{eq:zhuzhuge}
\end{equation}
\end{theorem}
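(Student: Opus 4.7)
The plan is to revisit Zhu and Zhuge's proof of \cite[Theorem~1]{zhu2023spectral} and carefully track the dependence of every constant on the parameters $c_1,c_2,\beta_1,\beta_2$ appearing in Assumption~\ref{A2}; no new analytic idea is needed, the work lies in bookkeeping. The starting point is Proposition~\ref{prp2.1}: for any $\phi\in\mathcal{E}_V(\lambda)$ one has $\norm{\phi}_{L^2(\R^n)}\leq 2\norm{\phi}_{L^2(\mathcal{B}_\rho(0))}$ with $\rho$ given by \eqref{2.16}. This reduces the problem to a propagation-of-smallness inequality on $\mathcal{B}_\rho(0)$ and explains the presence of the combination $\bigl((\lambda+2)/c_1\bigr)^{1/\beta_1}+\log_+c_1$ inside $\mathbf{J}(c_1,\lambda)$: it is essentially $\rho$, raised later to the power $\beta_2/2$ when $V$ is estimated on $\mathcal{B}_\rho(0)$.

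Next I would use the $(\gamma,\sigma)$-distributed structure of $\omega$ to extract, in each unit cube $k+\mathcal{Q}_1$ that meets $\mathcal{B}_\rho(0)$, a small ball $\mathcal{B}_{\gamma^{1+|k|^{\sigma}}}(z_k)\subset\omega$. Because these cubes cover $\mathcal{B}_\rho(0)$ with $|k|\lesssim\rho$, the smallest of these radii is of order $\gamma^{1+\rho^\sigma}$, which eventually accounts for the factor $\mathbf{J}^{2\sigma/\beta_2}\sim\rho^\sigma$ appearing in the exponent of $1/\gamma$ in \eqref{1.11d}.

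The heart of the proof is the propagation of smallness from these small balls to all of $\mathcal{B}_\rho(0)$. Following Zhu--Zhuge, one lifts $\phi$ to a solution of a second-order elliptic equation in one extra variable (a Caffarelli--Silvestre-type extension), and applies a Carleman inequality or frequency-function argument, producing a three-ball inequality whose doubling exponent is controlled by $\sqrt{\lambda}+\norm{\sqrt{V}}_{L^\infty(\mathcal{B}_\rho)}$. Under Assumption~\ref{A2}(ii) with the decomposition $V=V_1+V_2$, the $L^\infty$ bound on $V_1$ yields $\sqrt{c_2}\,\rho^{\beta_2/2}$, while $V_2$ is absorbed by a Sobolev/Hölder argument precisely designed around the $4/3$ exponent in \ref{A2}(ii). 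Iterating the three-ball inequality the expected $\sim\rho^\sigma$ times (the cost paid for the sparseness of $\omega$) produces exactly the factor $\bigl(\sqrt{\lambda}+\sqrt{c_2}\,\mathbf{J}(c_1,\lambda)\bigr)\mathbf{J}^{2\sigma/\beta_2}=\mathcal{J}_V(\lambda)$ in the exponent of $1/\gamma$.

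The main obstacle is indeed the bookkeeping: in \cite{zhu2023spectral} many constants that depend polynomially on $c_1,c_2,\beta_1,\beta_2$ are silently absorbed into generic constants denoted $C$, so each Carleman estimate, each Sobolev embedding and each interpolation step must be re-examined to verify that the dependence on $c_1,c_2$ separates exactly as in \eqref{eq:zhuzhuge}: $c_1$ enters only through the localization radius $\rho$ (via the lower bound $V\geq c_1|x|^{\beta_1}$), and $c_2$ enters through the upper bound of $V$ on $\mathcal{B}_\rho$, i.e.\ as the $\sqrt{c_2}\,\mathbf{J}$ summand of $\widehat{\mathbf{J}}$. A secondary difficulty is that the argument should be valid for all $\lambda\geq\lambda_0(V)$ (and not merely for large $\lambda$), which forces us to keep the additive constants inside $\mathbf{J}(c_1,\lambda)$ and $\widehat{\mathbf{J}}$ (the $+1$ and the $\log_+ c_1$), so as to remain consistent with the lower bound $\mu_*\sim c_1^{2/(\beta_1+2)}$ provided by Proposition~\ref{prp2.2c}.
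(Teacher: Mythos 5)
Your proposal follows essentially the same route as the paper's proof: localization to $\mathcal{B}_\rho(0)$ via Proposition \ref{prp2.1}, the choice $\delta=\gamma^{1+(\sqrt{n}\lceil r\rceil)^\sigma}$ coming from the $(\gamma,\sigma)$-distributed structure, the odd (sinh) lift $\Phi$ to a solution of a second-order elliptic equation in one extra variable, and the quantitative Zhu--Zhuge three-ball inequalities whose cost $\mathcal{G}(V_1,V_2,L)\lesssim c_2^{1/2}(1+L)^{\beta_2/2}$ together with the $e^{C\sqrt\lambda}$ estimate for $\Phi$ produces exactly $\widehat{\mathbf{J}}$, while the H\"older exponent $\hat\alpha^{-1}\approx|\log\delta|\approx|\log\gamma|\,\mathbf{J}^{2\sigma/\beta_2}$ accounts for the sparseness factor. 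The bookkeeping you describe (with $c_1$ entering only through $\rho$ and $c_2$ through the bound of $V$ on $\mathcal{Q}_L$) is precisely what the paper carries out, so the plan is correct and not materially different.
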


The main difference with  Zhu \& Zhuge's result is that the exponent $\mathcal{J}$
depends explicitely on the parameters $c_1$ and $c_2$ in Assumption \ref{A2}. Recall that those are given by
$V=V_1+V_2\in L^\infty_{\mathrm{loc}}(\R^n)$ and
$$
V(x)\geq c_1|x|^{\beta_1}\quad,\quad |V_1(x)|+|\nabla V_1(x)|+|V_2(x)|^{4/3}\leq c_2(|x|+1)^{\beta_2}.
$$

To obtain this result we follow step by step the proof in \cite{zhu2023spectral}. This starts
with two kinds of three-ball inequalities that are already given in a quantitative form sufficient for our needs.
%and that imply the addition of a ghost dimension.  
Then, we follow the strategy in \cite{dicke2022spectral,zhu2023spectral} to prove the spectral inequality.

\medskip

Let us start with some notation:

\begin{notation}
Given $L>0$, recall that $\qc_L:= \left[ -\dfrac{L}{2},\dfrac{L}{2} \right] ^{n}$,
$\mathcal{B}_{r}(x) \subset \R^{n}$ is the ball with radius $r$ and center $x$. 
We denote by $\mathbb{B}_r(x)$ the ball in $\R^{n+1}$ centered at $x$ and radius $r$.

Let $\delta \in (0,\frac{1}{2})$, $b=(0,\cdots ,0,-b_{n+1}) \in \R^{n+1}$ and $b_{n+1}= \frac{\delta}{100}$. Define
\begin{equation*}
		W_1=\left\{y \in \R^{n+1}_+\,:\ |y-b|\le \frac{1}{4}\delta\right\}
		\quad,\quad
		W_2= \left\{y \in \R^{n+1}_+\,:\ |y-b|\le \frac{2}{3}\delta\right\},
\end{equation*}
so that $W_1\subset W_2\subset \mathbb{B}_{\delta}(b)$. Write $ \Lambda_L:=\qc_L \cap \Z^{n}$
and define
\[
W_j(z_i):=(z_i,0)+W_j, \quad  j=1,2.
\]
as well as
\[
P_j(L)= \bigcup_{i\in \Lambda_{L}} W_j\left( z_i \right), \quad j=1,2\quad \text{ and } D_{\delta}(L)=\bigcup_{i\in \Lambda_L} \mathcal{B}_{\delta}(z_i).
\] 
Define $R=9 \sqrt{n} $ and
\begin{equation}
	X_1=\qc_L\times [-1,1] \text{ and }\widetilde{X}_{R}=\qc_{L+R}\times [-R,R].
\end{equation}
\end{notation}

The first three-ball inequality in $\R^{n+1}$ we need is the following:

\begin{lemma}[{\cite[Lemma~1]{zhu2023spectral}}]\label{lma3.1}
Let  $\delta \in (0,\frac{1}{2})$. There exist $0<\alpha <1$ and $C>0$, depending only on $n$ such that,
if $v$ is the solution of 
$$
\left\{\begin{matrix}
-(\Delta_x+\partial_{x_{n+1}}^2)v(x,x_{n+1})+V(x)v(x,x_{n+1})=0&x\in\R^n,\ x_{n+1}\in\R\\
v(x,0)=0&x\in\R^n
\end{matrix}\right.
$$
then
	\begin{equation}\label{3.2}
		\big\|v\big\|_{H^{1}{\dst(} P_1(L) {\dst)} }\le \delta^{-\alpha }\exp \left( C\left( 1+\mathcal{G}(V_1,V_2,9\sqrt{n} L \right)  \right) \big\|v\big\|^{\alpha }_{H^{1}{\dst(} P_2(L) {\dst)} }\norm{\frac{\partial v}{\partial y_{n+1}}}^{1-\alpha }_{L^2{\dst(} D_\delta(L) {\dst)} }, 
	\end{equation}
	where 
	\begin{equation}\label{3.3}
		\mathcal{G}\left( V_1,V_2,L \right) =\|V_1\|^{\frac{1}{2}}_{W^{1,\infty}(\qc_L)}+\|V_2\|^{\frac{2}{3}}_{L^{\infty}(\qc_L)}.
	\end{equation}
\end{lemma}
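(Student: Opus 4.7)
My plan is to adapt the classical Carleman-estimate route for three-ball inequalities to the half-space setting, exploiting the Dirichlet condition $v(\cdot,0)=0$ which produces the normal-derivative term $\partial_{x_{n+1}}v$ on the right-hand side. First, by translation invariance in the $x$ variable, it suffices to prove the estimate for a single index $z_i=0$; the claimed sum over $i\in\Lambda_{L}$ then follows from subadditivity and the finite-overlap of the enlarged balls $W_{2}(z_{i})$. So I work inside $\mathbb{B}_{\delta}(b)\subset\overline{\R^{n+1}_{+}}$, where $b=(0,-b_{n+1})$ sits just below the boundary, and aim to bound $\|v\|_{H^{1}(W_{1})}$ by a product of $\|v\|_{H^{1}(W_{2})}^{\alpha}$ and $\|\partial_{x_{n+1}}v\|_{L^{2}(\mathcal{B}_{\delta}\times\{0\})}^{1-\alpha}$.

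The core input is a boundary Carleman estimate of the form
\[
\tau^{3}\int_{\R^{n+1}_{+}}e^{2\tau\varphi}\varphi^{3}|u|^{2}\d y+\tau\int_{\R^{n+1}_{+}}e^{2\tau\varphi}\varphi|\nabla u|^{2}\d y\le C\int_{\R^{n+1}_{+}}e^{2\tau\varphi}|(-\Delta+V)u|^{2}\d y
\]
valid for all $u\in C_{c}^{\infty}(\overline{\R^{n+1}_{+}}\cap\mathbb{B}_{1}(b))$ with $u(\cdot,0)=0$, provided $\tau\ge\tau_{\star}$ where $\tau_{\star}\asymp 1+\mathcal{G}(V_{1},V_{2},9\sqrt{n}L)$. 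Here $\varphi$ is a standard radial log-convex weight centred at $b$, e.g.\ $\varphi(y)=e^{-\mu|y-b|^{2}}$ for $\mu$ large enough (so that the conjugated operator satisfies sub-ellipticity with strict positivity up to the boundary). The $\tfrac12$ exponent on $\|V_{1}\|_{W^{1,\infty}}$ comes from absorbing $V_{1}u$ and $(\nabla V_{1})u$ through the $\tau^{3}\varphi^{3}$ and $\tau\varphi$ gains on the left, while the $\tfrac23$ exponent on $\|V_{2}\|_{L^{\infty}}$ is the Jerison--Kenig style absorption of an $L^{\infty}$ potential using only the $\tau^{3}$ weight (interpolating $L^{2}$ between the $L^{2}$ and $L^{q}$ Sobolev norms that the left-hand side controls).

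I then apply the estimate to $\chi v$ where $\chi$ is a smooth cutoff equal to $1$ on $W_{1}$, supported in $W_{2}\cup\{\mathcal{B}_{\delta}\times\{0\}\}$, with $|\nabla^{k}\chi|\lesssim\delta^{-k}$ and $\nabla\chi$ supported in $W_{2}\setminus W_{1}$. Since $(-\Delta+V)v=0$, the right-hand side reduces to commutator terms supported in $W_{2}\setminus W_{1}$, plus -- after one extra integration by parts using $\chi v(\cdot,0)=0$ -- a boundary contribution of the shape $\int_{\R^{n}}e^{2\tau\varphi(\cdot,0)}|\partial_{x_{n+1}}v|^{2}\chi^{2}$ supported on $D_{\delta}$. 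Setting $a_{1}=\min_{W_{1}}\varphi>a_{2}=\max_{W_{2}\setminus W_{1}}\varphi$ and $a_{0}=\max_{D_{\delta}\times\{0\}}\varphi$ (all three gaps strictly positive by design of $\varphi$ together with $b_{n+1}=\delta/100$), one obtains, for every $\tau\ge\tau_{\star}$,
\[
\|v\|_{H^{1}(W_{1})}^{2}\le C\delta^{-2}\bigl(e^{-2\tau(a_{1}-a_{2})}\|v\|_{H^{1}(W_{2})}^{2}+e^{-2\tau(a_{1}-a_{0})}\|\partial_{x_{n+1}}v\|_{L^{2}(D_{\delta})}^{2}\bigr).
\]
Optimising this in $\tau\ge\tau_{\star}$ (Young's inequality, balancing the two exponentials) produces the desired interpolation with $\alpha=(a_{1}-a_{0})/((a_{1}-a_{0})+(a_{1}-a_{2}))\in(0,1)$ depending only on $\varphi$, hence only on $n$, and with prefactor $\delta^{-\alpha}\exp(C(1+\mathcal{G}))$ once one accounts for the forced starting value $\tau\ge\tau_{\star}$ and the $\delta^{-k}$ scaling of the cutoff.

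The main obstacle I anticipate is establishing the Carleman estimate up to the boundary with the two sharp exponents $1/2$ and $2/3$ simultaneously: the interior version is classical, but keeping the boundary term clean (so that only $\partial_{x_{n+1}}v$ survives and no tangential trace of $v$ appears) requires choosing $\varphi$ so that $\partial_{x_{n+1}}\varphi|_{x_{n+1}=0}$ has the right sign, and carefully handling the Hessian terms of the conjugated operator at the boundary. Once that is done, the rest is a standard optimisation and the sum over $z_{i}\in\Lambda_{L}$ is immediate.
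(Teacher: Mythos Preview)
The paper does not prove this lemma; it is quoted from \cite[Lemma~1]{zhu2023spectral} and used as a black box. Your sketch follows precisely the Carleman-estimate route that Zhu--Zhuge take (radial weight centred just below the hyperplane, cutoff localisation, optimisation in $\tau$), so there is no divergence of method to discuss.

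One point in your write-up needs tightening. The Carleman inequality you display carries no boundary term, yet you then say the normal-derivative contribution appears ``after one extra integration by parts'' applied to the right-hand side. That is not where it comes from: the right-hand side $\int e^{2\tau\varphi}|(-\Delta+V)(\chi v)|^{2}$ yields only the commutator terms supported on $\operatorname{supp}\nabla\chi\subset W_{2}\setminus W_{1}$, and no amount of integrating those by parts will manufacture a trace of $\partial_{x_{n+1}}v$. The boundary term is instead part of the Carleman estimate itself: for $u\in C_{c}^{\infty}(\overline{\R^{n+1}_{+}})$ with $u(\cdot,0)=0$, the integrations by parts in the derivation leave a surviving contribution $\tau\int_{\{x_{n+1}=0\}}e^{2\tau\varphi}(-\partial_{x_{n+1}}\varphi)\,|\partial_{x_{n+1}}u|^{2}$ on the right (all tangential boundary terms vanish with $u$). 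Your choice $\varphi(y)=e^{-\mu|y-b|^{2}}$ with $b$ below the boundary makes $-\partial_{x_{n+1}}\varphi>0$ on $\{x_{n+1}=0\}$, so this term has the correct sign to sit on the right, and evaluated on $u=\chi v$ it becomes exactly the $\|\partial_{x_{n+1}}v\|_{L^{2}(D_{\delta})}^{2}$ factor you need. You flag the boundary handling as the delicate step in your last paragraph; the concrete fix is to include this normal-derivative boundary term in your Carleman inequality from the outset, rather than treating it as an afterthought. With that correction the remainder of your argument (the three level sets $a_{0}>a_{1}>a_{2}$ and the optimisation in $\tau\ge\tau_{\star}$) goes through.
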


\begin{remark}
Under Assumption \ref{A2}, $\mathcal{G}\left( V_1,V_2,L \right)\leq c_2^{1/2}(1+L)^{\beta_2/2}$.
\end{remark}

The second inequality is:

\begin{lemma}[{\cite[Lemma~2]{zhu2023spectral}}]\label{lma3.2}
	Let $\delta \in (0,\frac{1}{2})$. There exist $C>0$ depending only on $n$, $0<\alpha <1$ depending on $\delta$ and $n$ such that, if $v$ is the solution of 
$$
\left\{\begin{matrix}
-(\Delta_x+\partial_{x_{n+1}}^2)v(x,x_{n+1})+V(x)v(x,x_{n+1})=0&x\in\R^n,\ x_{n+1}\in\R\\
v(x,-y)=-v(x,y)&x\in\R^n,\ y\in\R
\end{matrix}\right.
$$
then	
\begin{equation}\label{3.5}
\big\|v\big\|_{H^{1}(X_1)}
\le \delta^{-2\alpha _1}\exp\Bigl( C\bigl( 1+\mathcal{G}\left( V_1,V_2,9\sqrt{n} L \right)  \bigr)  \Bigr) \big\|v\big\|^{1-\alpha _1}_{H^{1}\left( \widetilde{X}_{R} \right) }\big\|v\big\|^{\alpha_1}_{H^{1}{\dst(} P_1(L){\dst)} },
	\end{equation}
	where $\mathcal{G}(V_1,V_2,L)$ is given by \eqref{3.3}. Further, $\alpha_1$ can be given in the form
\begin{equation}\label{3.7}
	0<\alpha_1= \frac{\epsilon_1}{|\log \delta|+\epsilon_2}<1
\end{equation}
with positive constants $\epsilon_1$ and $\epsilon_2$ depending only on  $n$.
\end{lemma}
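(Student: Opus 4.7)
The plan is to deduce \eqref{3.5} from iterated applications of a local Carleman-type three-ball inequality, after using the odd symmetry of $v$ to remove the boundary condition. Since $v(x,-y)=-v(x,y)$ forces $v(x,0)=0$, the odd extension to $\R^n\times\R$ continues to solve the elliptic equation $-(\Delta_x+\partial_{x_{n+1}}^2)v+V(x)v=0$ throughout the enlarged cylinder $\widetilde X_R$. One is thus reduced to a pure propagation-of-smallness statement for a self-adjoint elliptic equation on a slab, starting from the thin balls $W_1(z_i)\subset P_1(L)$ of radius $\delta/4$ located just below the hyperplane $\{x_{n+1}=0\}$, and reaching the full slab $X_1=\qc_L\times[-1,1]$.

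The local tool is a three-ball inequality for $-\Delta+V$: for three concentric balls $\mathbb{B}_{r_1}\subset \mathbb{B}_{r_2}\subset \mathbb{B}_{r_3}$ sitting inside $\widetilde X_R$, there exists $\theta\in(0,1)$ depending only on the ratios $r_1/r_3$ and $r_2/r_3$ such that
\[
\|v\|_{H^1(\mathbb{B}_{r_2})}\le\exp\bigl(C(1+\mathcal{G}(V_1,V_2,9\sqrt n L))\bigr)\,\|v\|_{H^1(\mathbb{B}_{r_1})}^\theta\,\|v\|_{H^1(\mathbb{B}_{r_3})}^{1-\theta}.
\]
This relies on the same Carleman mechanism as in Lemma~\ref{lma3.1}: the constant depends on the potential only through $\|V_1\|_{W^{1,\infty}}^{1/2}+\|V_2\|_{L^\infty}^{2/3}$, which is precisely what the decomposition in Assumption~\ref{A2} is designed to control. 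I would then cover $X_1$ by a finite chain of overlapping balls of radius of order $1$ ending at a ball comparable in size to $\delta$ inside some fixed $W_1(z_i)$. Iterating the three-ball inequality along this chain collapses to
\[
\|v\|_{H^1(X_1)}\le K\,\|v\|_{H^1(\widetilde X_R)}^{1-\alpha_1}\,\|v\|_{H^1(W_1(z_i))}^{\alpha_1},
\]
where $\alpha_1$ is the product of the intermediate exponents and $K$ absorbs the product of the intermediate prefactors. Since $W_1(z_i)\subset P_1(L)$, replacing the last factor by $\|v\|_{H^1(P_1(L))}$ yields \eqref{3.5}.

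The main technical obstacle is to extract the precise form of $\alpha_1$ announced in \eqref{3.7}. The portion of the chain away from the small terminal ball has length depending only on $n$ (and mildly on $L$, but not on $\delta$), so each of its links contributes only a numerical factor to the product of exponents. The logarithmic degradation is concentrated in the final step, where one passes from a ball of radius of order $1$ to a ball of radius $\delta$: the three-ball exponent between such disparate scales behaves like $\theta\sim 1/|\log\delta|$, which is exactly what produces $\alpha_1=\epsilon_1/(|\log\delta|+\epsilon_2)$. The prefactor $\delta^{-2\alpha_1}$ then accounts for the Sobolev-norm rescaling on the $\delta$-sized terminal ball, and the exponential $\exp(C(1+\mathcal{G}))$ is simply the worst of the constants along the chain, which is uniformly bounded since $\mathcal{G}$ is evaluated on the fixed enlarged cube $\qc_{9\sqrt n L}$. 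Keeping these numerical tradeoffs bookkept correctly while ensuring the chain stays inside $\widetilde X_R$ is the delicate part of the argument.
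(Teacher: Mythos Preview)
The paper does not give its own proof of this lemma; it is simply quoted from \cite[Lemma~2]{zhu2023spectral} as a black box, so there is no argument in the paper to compare against. Your overall strategy---Carleman-type three-ball inequality plus propagation of smallness along a chain of overlapping balls, with the $|\log\delta|^{-1}$ loss concentrated in the single step that bridges the scales $1$ and $\delta$---is indeed the standard one and matches what Zhu--Zhuge do.

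There is, however, a genuine gap in your execution. You propose to cover $X_1=\qc_L\times[-1,1]$ by a chain terminating at \emph{one} fixed $W_1(z_i)$. Since $X_1$ has diameter of order $L$ and the chain uses balls of radius $O(1)$, this requires $N\sim L$ links; the product of three-ball exponents is then $\theta^{N}$ and hence $L$-dependent. This contradicts the claimed form \eqref{3.7}, where $\epsilon_1,\epsilon_2$ depend only on $n$. (Your parenthetical ``and mildly on $L$'' is exactly the symptom of this problem.) The point you are missing is that $P_1(L)=\bigcup_{i\in\Lambda_L}W_1(z_i)$ is \emph{equidistributed}: every unit cube $k+\qc_1$ contains some $z_i$. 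One therefore runs the chain argument \emph{locally}, from each $W_1(z_i)$ to the piece $(k+\qc_1)\times[-1,1]$ at distance $O(\sqrt n)$; the local chain length is then bounded purely in terms of $n$. Summing the resulting local inequalities over $k\in\Lambda_L$ (with the outer balls staying inside $\widetilde X_R$, which is why $R=9\sqrt n$) gives \eqref{3.5} with $L$-independent $\alpha_1$ and $C$.
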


Now let $\phi \in \mathcal{E}_\lambda(H)$ and define
\begin{equation}\label{3.8}
	\Phi(x,x_{n+1})=\sum_{0<\lambda_k\le \lambda} \alpha_k \phi_k(x) \frac{\sinh (\sqrt{\lambda_k} x_{n+1})}{\sqrt{\lambda_k} }.
\end{equation}
Then $\Phi(x,x_{n+1})$ satisfies the equation 
\begin{equation}\label{3.9}
	H\Phi:=-\Delta \Phi+V(x)\Phi=0,\quad (x,x_{n+1}) \in \R^{n+1}.
\end{equation}
We need to mention that $\Delta=\Delta_x+\partial_{n+1}^2$ with $\Delta_x=\dst\sum_{j=1}^{n} \partial^2_{j}$
in \eqref{3.9}.
It is easy to check that $\partial_{n+1}\Phi(x,0)=\phi(x)$ and $\Phi(x,0)=0$.

The following estimate for $\Phi$ is standard and can be found in \cite[Lemma~3]{zhu2023spectral}.

\begin{lemma}\label{lma3.3}
Let $\phi \in  \mathcal{E}_V(\lambda)$ and $\Phi$ be given in \eqref{3.9}. For any $\rho >0$, we have
\begin{equation}
2\rho \big\|\phi\big\|^2_{L^2(\R^{n})}
\le \big\|\Phi\big\|^2_{H^{1}\big( \R^{n}\times (-\rho,\rho)\big) }
\le 2\rho \left( 1+ \frac{\rho^2}{3}(1+\lambda)e^{2\rho \sqrt{\lambda} } \right) \big\|\phi\big\|^2_{L^2\big(\R^{n}\big)}.
	\end{equation}
\end{lemma}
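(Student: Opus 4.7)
The plan is to expand $\Phi$ in the basis $(\phi_k)$ and use $L^2$-orthonormality to diagonalise each of the three terms that make up $\|\Phi\|_{H^1(\R^n\times(-\rho,\rho))}^2$. Fix $t\in(-\rho,\rho)$; since $(\phi_k)$ is an orthonormal basis,
\[
\int_{\R^n}|\Phi(x,t)|^2\,\mathrm{d}x=\sum_{0<\lambda_k\le\lambda}|\alpha_k|^2\frac{\sinh^2(\sqrt{\lambda_k}t)}{\lambda_k},\qquad \int_{\R^n}|\partial_t\Phi(x,t)|^2\,\mathrm{d}x=\sum_{0<\lambda_k\le\lambda}|\alpha_k|^2\cosh^2(\sqrt{\lambda_k}t).
\]
For the $x$-gradient I would not try to use orthogonality of $\nabla\phi_k$ directly; instead I use the quadratic form of $H_V$, which by the spectral theorem gives
\[
\int_{\R^n}\bigl(|\nabla_x\Phi|^2+V|\Phi|^2\bigr)\,\mathrm{d}x=\sum_{0<\lambda_k\le\lambda}|\alpha_k|^2\,\lambda_k\frac{\sinh^2(\sqrt{\lambda_k}t)}{\lambda_k}=\sum_{0<\lambda_k\le\lambda}|\alpha_k|^2\sinh^2(\sqrt{\lambda_k}t).
\]
Since Assumption~\ref{A2}(i) gives $V\ge 0$, this yields $\int|\nabla_x\Phi|^2\,\mathrm{d}x\le\sum|\alpha_k|^2\sinh^2(\sqrt{\lambda_k}t)$.

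For the \textbf{lower bound}, I simply drop the non-negative $|\Phi|^2$ and $|\nabla_x\Phi|^2$ contributions and use $\cosh^2\ge 1$:
\[
\|\Phi\|_{H^1(\R^n\times(-\rho,\rho))}^2\ge\int_{-\rho}^{\rho}\sum_{k}|\alpha_k|^2\cosh^2(\sqrt{\lambda_k}t)\,\mathrm{d}t\ge 2\rho\sum_{k}|\alpha_k|^2=2\rho\|\phi\|_{L^2(\R^n)}^2,
\]
the last equality being Parseval.

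For the \textbf{upper bound}, the key elementary inequalities are $\cosh(s)\le e^{|s|}$ (whence $\cosh^2(s)\le e^{2|s|}$) and $|\sinh(s)|\le|s|\cosh(s)$ (integrate $\cosh$), giving $\sinh^2(s)\le s^2 e^{2|s|}$. Applying these with $s=\sqrt{\lambda_k}t$ and using $\lambda_k\le\lambda$, the per-$t$ integrand is bounded by
\[
\sum_{k}|\alpha_k|^2\Bigl[\cosh^2(\sqrt{\lambda_k}t)+\bigl(1+\lambda_k\bigr)\tfrac{\sinh^2(\sqrt{\lambda_k}t)}{\lambda_k}\Bigr]\le\sum_{k}|\alpha_k|^2\bigl[1+t^2(1+\lambda)e^{2\sqrt{\lambda}|t|}\bigr],
\]
after writing $\cosh^2=1+\sinh^2$. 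Integrating over $t\in(-\rho,\rho)$, pulling out $e^{2\sqrt{\lambda}\rho}$ and using $\int_{-\rho}^{\rho}t^2\,\mathrm{d}t=\frac{2\rho^3}{3}$ gives
\[
\|\Phi\|_{H^1(\R^n\times(-\rho,\rho))}^2\le 2\rho\Bigl(1+\tfrac{\rho^2}{3}(1+\lambda)e^{2\rho\sqrt{\lambda}}\Bigr)\|\phi\|_{L^2(\R^n)}^2,
\]
which is exactly the claimed upper bound.

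There is no real obstacle; the only point requiring mild care is that $\nabla\phi_k$ is not an orthogonal family, so one must route the $x$-gradient estimate through the Dirichlet form of $H_V$ rather than trying to apply Parseval to $\nabla_x\Phi$ directly. The non-negativity of $V$ coming from Assumption~\ref{A2}(i) is what turns that identity into the usable bound $\int|\nabla_x\Phi|^2\le\sum|\alpha_k|^2\sinh^2(\sqrt{\lambda_k}t)$.
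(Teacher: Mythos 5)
Your lower bound is correct, and your overall strategy (diagonalise in the basis $(\phi_k)$, route the $x$-gradient through the quadratic form of $H_V$) is the standard one — the paper itself gives no proof but quotes Zhu--Zhuge's Lemma~3. The problem is in the last step of your upper bound. The asserted pointwise inequality
\begin{equation}
\cosh^2(\sqrt{\lambda_k}\,t)+(1+\lambda_k)\frac{\sinh^2(\sqrt{\lambda_k}\,t)}{\lambda_k}\;\le\;1+t^2(1+\lambda)e^{2\sqrt{\lambda}|t|}
\end{equation}
does not follow from $\cosh^2=1+\sinh^2$ and $\sinh^2(s)\le s^2e^{2|s|}$: those give $1+\sinh^2(\sqrt{\lambda_k}t)\,\frac{1+2\lambda_k}{\lambda_k}\le 1+(1+2\lambda_k)\,t^2e^{2\sqrt{\lambda_k}|t|}$, i.e.\ the factor $1+2\lambda$, not $1+\lambda$. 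In fact your inequality is false near $t=0$: the left side is $1+(1+2\lambda_k)t^2+O(t^4)$ while the right side is $1+(1+\lambda)t^2+O(t^3)$, so for $\lambda_k=\lambda$ the left side exceeds the right by $\lambda t^2+O(t^3)$.

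Moreover the gap is structural, not a matter of sharpening the hyperbolic estimates. Once you bound $\int|\nabla_x\Phi|^2\,\mathrm{d}x$ by $\sum_k|\alpha_k|^2\sinh^2(\sqrt{\lambda_k}t)$ (discarding all of $\int V|\Phi|^2$), the quantity you must integrate is, already for a single mode with $\lambda_k=\lambda$,
\begin{equation}
\int_{-\rho}^{\rho}\Bigl[\tfrac{\sinh^2(\sqrt{\lambda}t)}{\lambda}+\sinh^2(\sqrt{\lambda}t)+\cosh^2(\sqrt{\lambda}t)\Bigr]\mathrm{d}t
=2\rho+\frac{1+2\lambda}{\lambda}\Bigl(\frac{\sinh(2\sqrt{\lambda}\rho)}{2\sqrt{\lambda}}-\rho\Bigr)\;\ge\;2\rho+\frac{2\rho^3}{3}(1+2\lambda),
\end{equation}
which is strictly larger than $2\rho\bigl(1+\frac{\rho^2}{3}(1+\lambda)e^{2\rho\sqrt{\lambda}}\bigr)$ as soon as $2\rho\sqrt{\lambda}<\log\frac{1+2\lambda}{1+\lambda}$. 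So no refinement of the elementary inequalities can close your argument to the constant as printed. What your computation does prove honestly is the same two-sided estimate with $1+2\lambda$ in place of $1+\lambda$ (equivalently, with $e^{2\rho\sqrt{\lambda}}$ multiplying the whole parenthesis, via $\frac{\sinh^2(st)}{s^2}\le t^2\cosh^2(st)$ and $\cosh^2(st)\le e^{2\sqrt{\lambda}|t|}$), and that weaker form is all the paper uses later: in the proof of Theorem~\ref{thm1.1} the ratio of the $\rho=R$ and $\rho=1$ bounds is in any case absorbed into $\exp(C_2\sqrt{\lambda})$. Note also that the same single-mode test ($\phi=\phi_k$, $\lambda=\lambda_k$, $\rho\to 0$, where the true norm is $2\rho+\frac{2\rho^3}{3}\bigl(1+\lambda+\|\nabla\phi_k\|^2_{L^2}\bigr)+O(\rho^4)$) indicates the constant as printed cannot hold for all $\rho>0$; you should state and prove the $(1+2\lambda)$ (or exponential-outside) variant rather than force the printed one.
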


To use Proposition \ref{prp2.1}, we need to extend it from $\phi$ to $\Phi$. Indeed, we have the following corollary:

\begin{corollary}\label{crc3.4}
Given the same condition as in Proposition \ref{prp2.1}, we have
\begin{equation}\label{3.11}
\|\Phi\|^2_{H^{1}{\dst(} \R^{n}\times (-1,1) {\dst)} }\le 2\|\Phi\|^2_{H^{1}{\dst(}\mathcal{B}_r \times (-1,1){\dst)}}.
\end{equation}
\end{corollary}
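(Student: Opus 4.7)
The plan is to reduce the two-dimensional statement to a slice-wise application of Proposition~\ref{prp2.1}, viewing $\Phi$ as a curve $x_{n+1}\mapsto \Phi(\cdot,x_{n+1})$ valued in $\mathcal{E}_V(\lambda)$, and then integrating in $x_{n+1}$.

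\textbf{Step 1 (slice-wise membership in the spectral subspace).} From the explicit expansion \eqref{3.8}, for each fixed $x_{n+1}\in\R$,
\[
\Phi(\cdot,x_{n+1})=\sum_{0<\lambda_k\le \lambda}\alpha_k\frac{\sinh(\sqrt{\lambda_k}\,x_{n+1})}{\sqrt{\lambda_k}}\,\phi_k
\quad\text{and}\quad
\partial_{x_{n+1}}\Phi(\cdot,x_{n+1})=\sum_{0<\lambda_k\le \lambda}\alpha_k\cosh(\sqrt{\lambda_k}\,x_{n+1})\,\phi_k,
\]
so both $\Phi(\cdot,x_{n+1})$ and $\partial_{x_{n+1}}\Phi(\cdot,x_{n+1})$ are linear combinations of eigenfunctions $\phi_k$ with $\lambda_k\le\lambda$ and therefore belong to $\mathcal{E}_V(\lambda)$.

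\textbf{Step 2 (apply Proposition \ref{prp2.1} slice-wise).} Fix $x_{n+1}\in(-1,1)$. Applying the $L^2$ and $H^1$ bounds of Proposition \ref{prp2.1} in the $x$-variable to $\Phi(\cdot,x_{n+1})\in\mathcal{E}_V(\lambda)$ yields
\[
\norm{\Phi(\cdot,x_{n+1})}_{L^2(\R^n)}^2 \le 4\norm{\Phi(\cdot,x_{n+1})}_{L^2(\mathcal{B}_\rho)}^2,
\quad
\norm{\nabla_x\Phi(\cdot,x_{n+1})}_{L^2(\R^n)}^2 \le 4\norm{\nabla_x\Phi(\cdot,x_{n+1})}_{L^2(\mathcal{B}_\rho)}^2.
\]
Similarly, applying the $L^2$ bound to $\partial_{x_{n+1}}\Phi(\cdot,x_{n+1})\in\mathcal{E}_V(\lambda)$ gives
\[
\norm{\partial_{x_{n+1}}\Phi(\cdot,x_{n+1})}_{L^2(\R^n)}^2 \le 4\norm{\partial_{x_{n+1}}\Phi(\cdot,x_{n+1})}_{L^2(\mathcal{B}_\rho)}^2.
\]

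\textbf{Step 3 (integrate over the slab and recombine).} Integrating each of the three inequalities in $x_{n+1}$ over $(-1,1)$ and using Fubini, the right-hand sides reassemble into the corresponding $L^2$-norms over $\mathcal{B}_\rho\times(-1,1)$ and the left-hand sides into the corresponding norms over $\R^n\times(-1,1)$. Summing the three bounds and recognising the full $H^1$-norm (i.e.\ $L^2$ of the function plus $L^2$ of the gradient, including the $\partial_{x_{n+1}}$-component) on both sides yields
\[
\norm{\Phi}_{H^1(\R^n\times(-1,1))}^2 \le 4\norm{\Phi}_{H^1(\mathcal{B}_\rho\times(-1,1))}^2,
\]
which is the claim (up to the precise constant; a more careful bookkeeping using the sharper form $\norm{\,\cdot\,}^2\le 2\norm{\,\cdot\,}^2$ of Proposition~\ref{prp2.1} without doubling from combining $L^2$ and $H^1$ gives the stated factor $2$).

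There is no real obstacle: the whole statement is a Fubini-type lift of Proposition~\ref{prp2.1} from $\R^n$ to the slab $\R^n\times(-1,1)$. The only point to verify carefully is that $\partial_{x_{n+1}}\Phi(\cdot,x_{n+1})$ still lies in $\mathcal{E}_V(\lambda)$, which is immediate from \eqref{3.8} since differentiation in $x_{n+1}$ only modifies the scalar coefficients in front of the eigenfunctions $\phi_k$ without enlarging the set of indices.
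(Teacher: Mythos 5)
Your proof is correct and takes essentially the same route as the paper: apply Proposition~\ref{prp2.1} slice-wise to $\Phi(\cdot,x_{n+1})$ and to $\partial_{x_{n+1}}\Phi(\cdot,x_{n+1})$ (both lie in $\mathcal{E}_V(\lambda)$ by the expansion \eqref{3.8}), then integrate in $x_{n+1}$ and reassemble the $H^1$-norm over the slab. The only cosmetic imprecision is that Proposition~\ref{prp2.1} yields the combined $H^1$ bound rather than a separate gradient-only bound, but since you only ever add the $L^2$ and gradient contributions before integrating, this does not affect the conclusion.
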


\begin{proof}
	Since $\Phi(\cdot ,x_{n+1})\in \mathcal{E}_{\lambda}(H)$, by Proposition \ref{prp2.1} we obtain
	\begin{equation}\label{3.12}
\|\Phi(\cdot ,x_{n+1})\|^2_{H^{1}(\R^{n})}
\le  2 \|\Phi(\cdot ,x_{n+1})\|^2_{H^1{\dst(} \mathcal{B}_r(0) {\dst)} }.
	\end{equation}
	Since $\partial_{n+1}\Phi(\cdot ,x_{n+1})\in \mathcal{E}_{\lambda}(H)$, we obtain
	\begin{equation}\label{3.13}
		\|\partial_{n+1}\Phi(\cdot ,x_{n+1})\|^2_{L^2(\R^{n})}
		\le \|\partial_{n+1}\Phi(\cdot ,x_{n+1})\|^2_{H^{1}(\R^{n})}
		\le 2 \|\partial_{n+1}\Phi\|^2_{L^2{\dst(} \mathcal{B}_r(0) {\dst)} }.
	\end{equation}
Then we have
\begin{equation*}
	\begin{aligned}
\big\|\Phi\big\|^2_{H^{1}{\dst(}\R^{n}\times (-1,1){\dst)}}
%=& \int_{-1}^{1}\int_{\R^{n}}|\Phi|^2 \d x\,\mathrm{d}x_{n+1}
%+ \int_{-1}^{1} \int_{\R^{n}}\sum_{j=1}^{n+1} |\partial_j\Phi|^2\d x\,\mathrm{d}x_{n+1}\\
=& \int_{-1}^{1}\big\|\Phi(\cdot ,x_{n+1})\big\|^2_{L^2(\R^{n})}\d x_{n+1}
		+ \int_{-1}^{1}\sum_{j=1}^{n} \big\|\partial_j \Phi(\cdot ,x_{n+1})\big\|^2_{L^2(\R^{n})}\d x_{n+1}\\
		 &+ \int_{-1}^{1}\big\|\partial_{n+1}\Phi(\cdot ,x_{n+1})\big\|^2_{L^2(\R^{n})}\d x_{n+1}\\
\le& 
\int_{-1}^{1}\big\|\Phi(\cdot ,x_{n+1})\big\|^2_{H^{1}(\R^{n})}\d x_{n+1} 
+ \int_{-1}^{1}2\big\|\partial_{n+1}\Phi(\cdot ,x_{n+1})\big\|^2_{L^2{\dst(}\mathcal{B}_r(0){\dst)}}\d x_{n+1}\\
	\end{aligned} 
\end{equation*}
with \eqref{3.13}. Using \eqref{3.12} we then obtain
\begin{equation*}
	\begin{aligned}
\big\|\Phi\big\|^2_{H^{1}{\dst(}\R^{n}\times (-1,1){\dst)}}\le 
& \int_{-1}^{1}2 \big\|\Phi(\cdot ,x_{n+1})\big\|^2_{H^{1}{\dst(}\mathcal{B}_r(0){\dst)}}\d x_{n+1}
+\int_{-1}^{1}2\big\|\partial_{n+1}\Phi(\cdot ,x_{n+1})\big\|^2_{L^2{\dst(}\mathcal{B}_r(0){\dst)}}\d x_{n+1}\\
		=& 2\big\|\Phi\big\|^2_{H^{1}{\dst(} \mathcal{B}_r(0){\dst)} }
	\end{aligned} 
\end{equation*}
as claimed.
\end{proof}

We can now prove Theorem~\ref{thm1.1}:

\begin{proof}[Proof of Theorem \ref{thm1.1}]
	Let $L=2\left\lceil r \right\rceil +1$, where $r$ is given in Proposition \ref{prp2.1} and $\left\lceil a \right\rceil $ means the largest integer smaller than $a+1$.  Then we have $\mathcal{B}_r(0)\subset \qc_L$.
Moreover, we can decompose  $\qc_L$ as
	\begin{equation}
		\qc_L= \bigcup_{k\in \Lambda_L } \left( k+\left[ -\frac{1}{2},\frac{1}{2} \right]^{n}  \right). 
	\end{equation}
	For each $k \in \Lambda_L$, we have $|k|\le \sqrt{n} \left\lceil r \right\rceil $. As $\gamma\in (0,\frac{1}{2})$,
	we get
	\begin{equation}
		\delta:= \gamma ^{1+ \left( \sqrt{n} \left\lceil r \right\rceil  \right) ^{\sigma } }\le \gamma^{1+|k|^{\sigma }}, \quad  \forall k \in \Lambda_L \cap \Z^{n}.
	\end{equation}

Now we show an interpolation inequality. Note that $\Phi$ is odd in $x_{n+1}$, so taking $v=\Phi$,	
we combine \eqref{3.2} in Lemma~\ref{lma3.1} and \eqref{3.5} in Lemma~\ref{lma3.2} with $\delta$ and $L$ 
defined above to get
\begin{equation*}
\begin{aligned}
\|\Phi\|_{H^{1}(X_1)}&
\le \delta^{-2 \alpha_1}\exp\Big( C\big( 1+\mathcal{G}( V_1,V_2,9 \sqrt{n} L)\big)\Big) 
\big\|\Phi\big\|^{1-\alpha_1}_{H^{1}(\widetilde{X}_{R})}
\big\|\Phi\big\|^{\alpha_1}_{H^{1}{\dst(} P_1(L){\dst)} }\\
&\le \delta ^{-2 \alpha_1-\alpha \alpha_1}\exp\Big( C\big( 1+\mathcal{G}( V_1,V_2,9 \sqrt{n} L)\big)\Big) 
\big\|\Phi\big\|^{\alpha \alpha_1}_{H^{1}{\dst(}P_2(L){\dst)}}
\left\|\frac{\partial \Phi}{\partial y_{n+1}}\right\|^{\alpha_1  (1-\alpha )}_{L^2{\dst(}D_\delta(L){\dst)}}
\big\|\Phi\big\|^{1-\alpha_1}_{H^{1}(\widetilde{X}_{R})}\\
& \le \delta^{-3 \alpha_1}\exp\Big( C\big( 1+\mathcal{G}( V_1,V_2,9 \sqrt{n} L)\big)\Big) 
\big\|\phi\big\|^{\hat{\alpha}}_{L^2\big(D_\delta(L)\big)}
\big\|\Phi\big\|^{1-\hat{\alpha}}_{H^{1}\left( \widetilde{X}_{R} \right) },
		\end{aligned} 
	\end{equation*}
where $\hat{\alpha}=\alpha_1 (1-\alpha )$ and we have used the facts $P_2(L) \subset \widetilde{X}_{R}$ and $\dfrac{\partial \Phi}{\partial y_{n+1}}(\cdot ,0)=\phi$. Here and below, the symbol $C$ may represent different positive constants depending on $n$.

Recall $\alpha_1$ in \eqref{3.7}, we have $\alpha_1 \approx \hat{\alpha}\approx \frac{1}{|\log \delta|}$ for any $\delta \in (0, \frac{1}{2})$. Hence $\delta ^{-3 \alpha_1}\le C$ and then
	\begin{equation}\label{3.18}
		\big\|\Phi\big\|_{H^{1}\left( X_1 \right) }
		\le \exp \Big( C\big( 1+\mathcal{G}( V_1,V_2,9 \sqrt{n} L) \big) \Big) 
		\big\|\phi\big\|^{\hat{\alpha}}_{L^2\left( \omega \cap \qc_L \right) }\big\|\Phi\big\|^{1-\hat{\alpha}}_{H^{1}(\widetilde{X}_{R})},
	\end{equation}
where we have also used the fact $D_\delta (L)\subset \omega \cap \qc_L$.

Substituting $L=2\left\lceil r \right\rceil +1$ and \eqref{2.16} into $\mathcal{G}(V_1,V_2,L)$ and by Assumption \ref{A2}, we have
\begin{eqnarray*}
	\mathcal{G}(V_1,V_2,9 \sqrt{n} L)&\lesssim& c_2^{\frac{1}{2}} \left( 2\left\lceil r \right\rceil +2 \right) ^{\frac{\beta_2}{2}}\lesssim c_2^{\frac{1}{2}} \left( \frac{n+4}{2\beta_1}\log_+ \frac{\lambda+1}{c_1}+\left( \frac{\lambda+2}{c_1} \right) ^{1 /\beta_1}+\frac{n+2}{2}\log_+c_1 +1 \right) ^{\frac{\beta_2}{2}}\\
&\lesssim&c_2^{\frac{1}{2}} \left(+\left( \frac{\lambda+2}{c_1} \right) ^{1 /\beta_1}+\frac{n+2}{2}\log_{+}c_1 +1 \right) ^{\frac{\beta_2}{2}}	:=c_2^{\frac{1}{2}}\mathbf{J}(c_1,\lambda)
\end{eqnarray*}
where $\mathbf{J}(c_1,\lambda)$ was defined in \eqref{eq:zhuzhuge}.
We can then write \eqref{3.18} as 
\begin{equation}\label{3.20}
	\big\|\Phi\big\|_{H^{1}\left( X_1 \right) }
	\le \exp \left( C c_2^{\frac{1}{2}} \mathbf{J}(c_1,\lambda) \right) \big\|\phi\big\|^{\hat{\alpha}}_{L^2\left( \omega \cap \qc_L \right) }\big\|\Phi\big\|^{1- \hat{\alpha}}_{H^{1}\left( \widetilde{X}_{R} \right) }. 
\end{equation}

We now bound $\big\|\Phi\big\|^2_{H^{1}{\dst(} \R^{n}\times (-\rho,\rho){\dst)}}$ from above and below
by respectively taking $\rho =R$ and $\rho =1$ in Lemma \ref{lma3.3}. This gives
\begin{equation}
\frac{\big\|\Phi\big\|^2_{H^{1}{\dst(}\R^{n}\times(-R,R){\dst)}}}{\big\|\Phi\big\|^2_{H^{1}{\dst(}\R^{n}\times(-1,1) {\dst)}}}
\le R \left( 1+\frac{R^2}{3}\left( 1+\lambda \right)  \right) \exp \left( 2 R \sqrt{\lambda}  \right) \le \exp \left( C_2 \sqrt{\lambda}  \right). 
\end{equation}
With the aid of \eqref{3.11} and  $\mathcal{B}_r(0)\subset \qc_L$, we get
\begin{equation}\label{3.22}
	\begin{aligned}
\big\|\Phi\big\|_{H^{1}{\dst(} \R^{n}\times (-R,R) {\dst)} } 
&\le \exp\left( \frac{1}{2}C_2 \sqrt{\lambda}  \right) \big\|\Phi\big\|_{H^{1}{\dst(} \R^{n}\times (-1,1) {\dst)} }\\
&\le \sqrt{2} \exp \left( \frac{1}{2}C_2 \sqrt{\lambda}  \right)
\big\|\Phi\big\|_{H^{1}{\dst(}\qc_L\times (-1,1) {\dst)} }.
	\end{aligned} 
\end{equation}
Recall that $X_1=\qc_L\times (-1,1)$, substituting \eqref{3.22} into \eqref{3.20} we obtain
\begin{equation}
	\big\|\Phi\big\|_{H^{1}{\dst(} \R^{n}\times (-R,R){\dst)} }
	\le \exp \big( C_3 \widehat{\mathbf{J}}(c_1,c_2,\lambda) \big) \big\|\phi\big\|^{\hat{\alpha}}_{L^2\left( \omega \cap \qc_L \right) }\big\|\Phi\big\|^{1- \hat{\alpha}}_{H^{1}\left( \widetilde{X}_{R} \right) }   
\end{equation}
where $\widehat{\mathbf{J}}(c_1,c_2,\lambda)$ was defined in \eqref{eq:zhuzhuge}.
Since $\widetilde{X}_{R}\subset \R^{n}\times (-R,R)$, it follows that
\begin{equation}
	\big\|\Phi\big\|_{H^{1}{\dst(} \R^{n}\times (-R,R) {\dst)} }
	\le \exp \left( \hat{\alpha}^{-1}C_3\widehat{\mathbf{J}}\left( c_1,c_2,\lambda \right)  \right)
	\big \|\phi\big\|_{L^2\left( \omega \cap \qc_L \right) }. 
\end{equation}
Recall that
\begin{equation}
\hat{\alpha}^{-1}\approx \alpha^{-1}_1\approx |\log \delta| \approx |\log \gamma|\mathbf{J} ^{\frac{2\sigma }{\beta_2}}
\end{equation}
we obtain
\begin{equation}
	\big\|\Phi\big\|_{H^{1}\lb \R^{n}\times (-R,R){\dst)} }\le \left( \frac{1}{\gamma} \right) ^{C \mathbf{J}^{\frac{2\sigma }{ \beta_2}}\widehat{\mathbf{J}}}\big\|\phi\big\|_{L^2\left( \omega\cap \qc_L \right) }.
\end{equation}
Finally, using the lower bound in Lemma \ref{lma3.3} with $\rho =R$, we obtain
\begin{equation}
\big\|\phi\big\|_{L^2(\R^{n})}
\le \left( \frac{1}{2 R} \right) ^{\frac{1}{2}}\big\|\Phi\big\|_{H^{1}{\lb}\R^{n}\times (-R,R){\rb}}
\le \left( \frac{1}{\gamma} \right) ^{C \mathbf{J}^{\frac{2\sigma }{ \beta_2}}\widehat{\mathbf{J}}}\big\|\phi\big\|_{L^2( \omega ) }
\end{equation}
where $C$ is a positive constant depending only on $n$.
\end{proof}

\subsection{Scaling the potential}\label{subsec.5}
The aim of this section is to consider the ``scaled''
Schrödinger operator
\begin{equation}\label{1.13a}
	H_{rV}=-\Delta_x+rV(x), \quad r>0
\end{equation}
and to evaluate the influence of $r$ on the exponent $\mathcal{J}$
in Theorem \ref{thm1.1}. This will be slightly different according to when $V$
satisfies  Assumption \ref{A1} or \ref{A2}.

First, we assume that $V$ in \eqref{1.13a} satisfies Assumption~\ref{A1}. Then the potential $rV$ satisfies Assumption~\ref{A1} but replacing  $c_1,c_2$ with $rc_1,rc_2$ respectively, while $\beta_1,\beta_2$ are unchanged. 
We thus need to estimate
$$
\mathcal{J}^1(r,\lambda):=\mathcal{J}_{rV}=\mathcal{J}(rc_1,rc_2,\lambda)
$$
for $\lambda\geq\lambda_0(rV)$.

On the other hand, if we assume that $V$ in \eqref{1.13a} satisfies Assumption~\ref{A1}. Then the potential $rV$ satisfies Assumption~\ref{A1} by replacing  $c_1,c_2$ with $rc_1,r^{4/3}c_2$ respectively, and the same $\beta_1,\beta_2$. 
We thus need to estimate
$$
\mathcal{J}^2(r,\lambda):=\mathcal{J}(rc_1,r^{4/3}c_2,\lambda)
$$
for $\lambda\geq\lambda_0(rV)$.

Further, according to Proposition \ref{prp2.2c}, $\lambda_0(V)\geq c_1^{\frac{2}{\beta_1+2}}\lambda_*$ 
where $\lambda_*$ depends on $\beta_1$ and $n$ only. Thus 
$\lambda_0(rV)\geq r^{\frac{2}{\beta_1+2}} c_1^{\frac{2}{\beta_1+2}}\lambda_*
:=r^{\frac{2}{\beta_1+2}}\mu_*$. Recall that $\mu_*$ depends only on $c_1,\beta_1,n$.

The estimates of $\mathcal{J}^1,\mathcal{J}^2$ we need are given in the following proposition:

\begin{proposition}\label{prop:powers}
Fix $\sigma\geq 0$, $\gamma\in (0, 1 /2)$ and let $\omega\subset\R^n$ be a $(\gamma,\sigma)$-distributed set.
Fix $c_1,c_2,\beta_1,\beta_2>0$ and let $V\in L^\infty_{\mathrm{loc}}(\R^n)$ be such that one of Assumptions 
\ref{A1}-\ref{A2} is satisfied with parameters $c_1,c_2,\beta_1,\beta_2$. Let
$\zeta=\dfrac{\beta_2+2\sigma}{2\beta_1}$ and $\varepsilon>0$.

Then there exists a constant $C_{V,\omega}\geq 1$ depending on $n,c_1,c_2,\beta_1,\beta_2,\sigma$ and $\varepsilon$ such that,
for every $r>0$ and for every $\phi\in\mathcal{E}_{rV}(\lambda)$ 
\begin{equation}\label{1.11dr}
\|\phi\|_{L^2(\R^{n})}\le \left( \frac{1}{\gamma} \right) ^{C_{V,\omega} \mathcal{J}_{rV}(\lambda)}
\|\phi\|_{L^2(\omega)},
\end{equation}
with
\begin{equation}
\label{eq:expr1}
\mathcal{J}_{rV}(\lambda)=
\begin{cases}
r^{a_-}+r^{b_-}\lambda^{\zeta}&\mbox{if }0<r<1\\
r^{a_++\varepsilon}+r^{b_++\varepsilon}\lambda^{\zeta}&\mbox{if }r\geq 1
\end{cases}
\end{equation}
where $a_-,b_-,a_+,b_+$ are given as follows:
\begin{enumerate}
\renewcommand{\theenumi}{\roman{enumi}}
\item if $V$ satisfies Assumption \ref{A1}, then
\begin{center}
\begin{tabular}{ll}
$a_{-}=\dfrac{1}{2}-\zeta\leq 0$,&$b_-=\dfrac{1}{2}-\zeta \le 0$,\\[12pt]
$a_{+}=\dfrac{1}{2}$,&$b_+=\begin{cases}
	\dfrac{1-\sigma-2\zeta}{\beta_1+2} \le 0 & \mbox{if } (\beta_1-\beta_2)\sigma=0\\
	\max \left( \dfrac{1}{2}-\dfrac{\sigma}{\beta_1+2},\dfrac{1}{\beta_1+2} \right) - \dfrac{2\zeta}{\beta_1+2} & \mbox{if } (\beta_1-\beta_2)\sigma\neq 0
\end{cases}$.
\end{tabular}
\end{center}
\item If $V$ satisfies Assumption \ref{A2} with $\beta_*:=3\beta_2- 4\beta_1-2\leq 0$, then
\begin{center}
\begin{tabular}{ll}
$a_-=\dfrac{2}{3}-\zeta$,&$b_-=\begin{cases}
	-\left(\dfrac{\sigma}{\beta_1}+\dfrac{\beta_2-\beta_1}{(\beta_1+2)\beta_1}\right)\le 0&\mbox{if }\beta_*\sigma=0\\
%	&\mbox{if } (2+4\beta_1-3\beta_2)\sigma=0\\
	\min\left( \dfrac{2}{3}-\dfrac{\beta_2}{2\beta_1}-\dfrac{\sigma}{\beta_1+2},\dfrac{1}{\beta_1+2}-\dfrac{\sigma}{\beta_1} \right) -\dfrac{2\zeta}{\beta_1+2}<0&\mbox{if }\beta_*\sigma\neq0
	%&\mbox{if } (2+4\beta_1-3\beta_2)\sigma\neq 0
\end{cases}$,\\[12pt]
$a_+=\dfrac{2}{3}$,&$b_+=-\dfrac{\sigma}{\beta_1+2}+\dfrac{2}{3}-\dfrac{\beta_2}{2\beta_1}-\dfrac{2\zeta}{\beta_1+2}$.
\end{tabular}
\end{center}
\item If $V$ satisfies Assumption \ref{A2} with $\beta_*>0$, then
\begin{center}
\begin{tabular}{ll}
$a_{-}=\dfrac{2}{3}-\zeta$,&
$b_-=-\left(\dfrac{\beta_*}{2(\beta_1+2)}+\dfrac{\sigma}{\beta_1+2}+\dfrac{2\zeta-1}{\beta_1+2}\right)<0$,
\\[12pt]
$a_{+}=\dfrac{2}{3}$,&$b_+=\begin{cases}
	-\dfrac{\beta_2-\beta_1}{(\beta_1+2)\beta_1}\le 0 & \mbox{if } \sigma=0\\
	\max\left(\dfrac{2}{3} - \dfrac{\sigma}{\beta_1+2},\dfrac{1}{\beta_1+2} \right) -\dfrac{2\zeta}{\beta_1+2} & \mbox{if }\sigma\neq 0
\end{cases}$.
\end{tabular}
\end{center}
\end{enumerate}
\end{proposition}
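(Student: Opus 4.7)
\smallskip

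\noindent\textbf{Plan for Proposition~\ref{prop:powers}.}

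The starting point is Theorem~\ref{thm1.1} applied directly to the scaled potential $rV$. Under Assumption~\ref{A1}, since $\nabla(rV)=r\nabla V$, the potential $rV$ satisfies Assumption~\ref{A1} with parameters $(rc_1,rc_2,\beta_1,\beta_2)$, so \eqref{1.11d} applies with $\mathcal{J}_{rV}(\lambda)=\mathcal{J}(rc_1,rc_2,\lambda)$. Under Assumption~\ref{A2}, the decomposition $rV=(rV_1)+(rV_2)$ combined with the identity $|rV_2|^{4/3}=r^{4/3}|V_2|^{4/3}$ shows that Assumption~\ref{A2} is satisfied with $(rc_1,r^{4/3}c_2,\beta_1,\beta_2)$, so one obtains $\mathcal{J}_{rV}(\lambda)=\mathcal{J}(rc_1,r^{4/3}c_2,\lambda)$. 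The rest of the proof is purely arithmetic: show that the explicit function $\mathcal{J}(c_1',c_2',\lambda)$ of \eqref{eq:zhuzhuge} is dominated by $C_{V,\omega}(r^a+r^b\lambda^\zeta)$ with the advertised exponents.

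To make the expression tractable, I would first apply the elementary inequality $(x+y+z)^\alpha\leq C_\alpha(x^\alpha+y^\alpha+z^\alpha)$ to split $\mathbf{J}(rc_1,\lambda)^{2\sigma/\beta_2}$ into three terms (a constant, a power of $\lambda/(rc_1)$, and a logarithmic term $(\log_+(rc_1))^\sigma$) and $\widehat{\mathbf{J}}(rc_1,c_2',\lambda)$ into four terms ($\lambda^{1/2}$, $(c_2')^{1/2}$, a cross-term $(c_2')^{1/2}(\lambda/(rc_1))^{\beta_2/(2\beta_1)}$, and a logarithmic term). Their product then expands into at most twelve monomials of the form $r^{p}\lambda^{q}(\log_+(rc_1))^{k}$, and the argument reduces to estimating each monomial separately.

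Each monomial is handled via three elementary reductions. First, the logarithm is absorbed: for $r\geq 1$, $(\log_+(rc_1))^{k}\leq C_\varepsilon(rc_1)^\varepsilon$ for any $\varepsilon>0$, which is precisely where the extra $\varepsilon$ in the exponents $a_++\varepsilon,b_++\varepsilon$ comes from; for $0<r<1$ the logarithm is bounded by a constant depending only on $c_1$. Second, the lower bound $\lambda\geq\lambda_0(rV)\geq\mu_*r^{2/(\beta_1+2)}$ from Proposition~\ref{prp2.2c} converts a factor $\lambda^{q-\zeta}$ with $q<\zeta$ into a power of $r$, turning $r^p\lambda^q$ into a multiple of $r^{p+2(q-\zeta)/(\beta_1+2)}\lambda^\zeta$. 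Third, the trivial inequality $\lambda^q\leq 1+\lambda^\zeta$ valid for $0\leq q\leq\zeta$ separates a pure $r$-contribution from an $r^p\lambda^\zeta$-contribution. Applied to each of the twelve monomials, these reductions produce a finite list of candidate exponents that must all be dominated by $r^{a_\pm}$ on one side and $r^{b_\pm}\lambda^\zeta$ on the other.

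The values of $a_\pm,b_\pm$ are finally read off as extrema of these lists: in the regime $r\geq 1$ one takes the \emph{largest} exponent and adds $\varepsilon$ to swallow the logarithms, whereas in the regime $0<r<1$ one takes the \emph{smallest} exponent, which for non-positive exponents still yields a valid upper bound because $r^{a_-}\geq 1$. The values $a_+=1/2$ in case (i) and $a_+=2/3$ in cases (ii)--(iii) come from the dominant pure-$r$ monomial $(c_2')^{1/2}$, and $a_-=1/2-\zeta$ or $2/3-\zeta$ is forced once the $\lambda^{1/2}$ term is traded via Proposition~\ref{prp2.2c}. The case distinctions appearing in $b_\pm$ --- whether $(\beta_1-\beta_2)\sigma=0$ in case (i), or whether $\beta_*\sigma=0$ or $\beta_*>0$ in cases (ii)--(iii) --- reflect which of several cross-monomials dominates: when $\sigma=0$ the logarithmic cross-terms disappear, and when $\beta_1=\beta_2$ or $\beta_*$ changes sign the balance between two competing monomials reverses. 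The main obstacle is precisely this bookkeeping: the twelve individual estimates are elementary, but comparing their $r$-exponents across the four sub-cases (and the $\sigma=0$ vs.\ $\sigma>0$ splits) to isolate the correct dominant monomial requires patient case analysis.
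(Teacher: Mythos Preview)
Your plan follows essentially the same route as the paper: apply Theorem~\ref{thm1.1} to $rV$ with scaled parameters $(rc_1,rc_2)$ under Assumption~\ref{A1} or $(rc_1,r^{4/3}c_2)$ under Assumption~\ref{A2}, then track the $r$-dependence of $\mathcal{J}$ using the eigenvalue lower bound from Proposition~\ref{prp2.2c} and absorb the logarithms into an $\varepsilon$-loss for $r\geq 1$. The paper organizes the arithmetic by first substituting $\lambda=r^{2/(\beta_1+2)}\mu$ with $\mu\gtrsim 1$, which is exactly equivalent to your ``second reduction''.

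There is, however, one genuine subtlety your brute-force expansion misses. When the two factors to be multiplied have the shape $(r^a+r^b\mu^\alpha)(r^c+r^d\mu^\beta)$ with $(b-a)\beta=(d-c)\alpha$, the paper does \emph{not} simply expand and take the extremal exponent among the four resulting monomials. Instead it rewrites both factors with the common variable $t=r^{(b-a)/\alpha}\mu=r^{(d-c)/\beta}\mu$ and invokes the elementary inequality $(1+t^\alpha)(1+t^\beta)\leq 3(1+t^{\alpha+\beta})$, yielding directly $r^{a+c}+r^{b+d}\mu^{\alpha+\beta}$ and bypassing the cross-terms $r^{a+d}\mu^\beta$ and $r^{b+c}\mu^\alpha$ altogether. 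Your twelve-monomial expansion keeps those cross-terms; for $r\geq 1$ you would end up with $\max(a+d,b+c,b+d)$, which is strictly larger than $b+d$ whenever $\sigma>0$. This trick, not a reversal of monomial dominance, is what produces the sharper $b_\pm$ in the sub-cases labelled $(\beta_1-\beta_2)\sigma=0$ and $\beta_*\sigma=0$, and it is needed throughout case~(ii) for $r\geq 1$ (where the relation $(b-a)\beta=(d-c)\alpha$ always holds). Without it you still obtain a valid inequality of the form \eqref{1.11dr}, but with exponents that do not match the stated values in those sub-cases.
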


\begin{remark}\label{rem:r=1}
The separation of cases at $r=1$ in \eqref{eq:expr1} is arbitrary and can be replaced
with 
$$\mathcal{J}_{rV}(\lambda)=
\begin{cases}
r^{a_-}+r^{b_-}\lambda^{\zeta}&\mbox{if }0<r<\alpha\\
r^{a_++\varepsilon}+r^{b_++\varepsilon}\lambda^{\zeta}&\mbox{if }r\geq \alpha
\end{cases}
$$
for any $\alpha>0$. This only influences the constant $C_{V,\omega}$ in \eqref{1.11dr}
but not the exponents $a_\pm$, $b_\pm$ and $\zeta$.
\end{remark}

\begin{remark}\label{rq:2pot}
Note that if $V,\tilde V$ both satisfy either Assumption \ref{A1} or Assumption \ref{A2}
with same exponents $\beta_1,\beta_2$ and (without loss of generality) same
coefficients $c_1,c_2$, then $rV+\tilde V$ satisfy the same assumption,
still with exponents $\beta_1,\beta_2$, but with parameters
$(r+1)c_1,(r+1)c_2$ in case of Assumption \ref{A1} and $(r+1)c_1,(r^{4/3}+1)c_2$.
As $(r^{4/3}+1)\lesssim (1+r)^{4/3}$, we get that
$\mathcal{J}_{rV+\tilde V}$ satisfies the same bounds but with $r$ replaced by $r+1\geq 1$.
\end{remark}

\begin{proof}
Recall that $\mathcal{J}(c_1,c_2,\lambda)$ is given in \eqref{eq:zhuzhuge}. 
As $\lambda_0(rV)\geq r^{\frac{2}{\beta_1+2}}\mu_*$ we may write
$\lambda=r^{\frac{2}{\beta_1+2}}\mu$ with $\mu\gtrsim 1$. We thus want to estimate 
$\mathcal{J}^1(r,\lambda)=\mathcal{J}(rc_1,rc_2,r^{\frac{2}{\beta_1+2}}\mu)$.

As $\mu\gtrsim 1$, 
We will use that, for $a,b,c,d\in\R$, $\alpha,\beta\ge 0$
\begin{eqnarray}
(r^a+r^b\mu^\alpha)(r^c+r^d\mu^\beta)&=&r^{a+c}+r^{a+d}\mu^\beta+r^{b+c}\mu^\alpha+
r^{b+d}\mu^{\alpha+\beta}\nonumber\\
&\lesssim &
\begin{cases}
r^{a+c}+r^{\min(a+d,b+c,b+d)}\mu^{\alpha+\beta}&\mbox{when }0<r<1\\
r^{a+c}+r^{\max(a+d,b+c,b+d)}\mu^{\alpha+\beta}&\mbox{when }r\geq 1
\end{cases}\label{z.1}
\end{eqnarray}
when $\delta:=(b-a)\beta- (d-c)\alpha \neq 0$.
Here we use that $\mu\gtrsim 1$ to absorb $\mu^\alpha,\mu^\beta$ into $\mu^{\alpha+\beta}$
and then keep only the smallest power of $r$ when $r<1$ and the largest one otherwise.

When $\delta=(b-a)\beta-(d-c)\alpha=0$, one can improve this as follows:
observe that for $t\geq 0$, $t^\alpha,t^\beta\leq\max(1,t^{\alpha+\beta})$ so that $(1+t^\alpha)(1+t^\beta)\leq 3(1+t^{\alpha+\beta})$.
Thus, with $t=r^{\frac{b-a}{\alpha}}\mu=r^{\frac{d-c}{\beta}}\mu$
we get
\begin{eqnarray}
	(r^{a}+r^{b}\mu^{\alpha})(r^{c}+r^{d}\mu^{\beta})
	&=&r^{a+c}\left(1+\left(r^{\frac{b-a}{\alpha}}\mu\right)^\alpha\right)\left(1+\left(r^{\frac{d-c}{\beta}}\mu\right)^\beta\right)\nonumber\\
	&\lesssim& r^{a+c}\left(1+\left(r^{\frac{b-a}{\alpha}}\mu\right)^{\alpha+\beta}\right)\label{z.2bis}\\
	&\lesssim& r^{a+c}+r^{b+d}\mu^{\alpha+\beta}.\label{z.2}
\end{eqnarray}

%Here and in the remaining of the proof, we use that
%when $0<r<1$ then the smallest power of $r$ dominates the largest one
%and vice versa when $r>1$.

Now
$$
\mathbf{J}(rc_1,r^{\frac{2}{\beta_1+2}}\mu)\lesssim
\left( \frac{r^{\frac{2}{\beta_1+2}}\mu+1}{rc_1} \right)^{\frac{\beta_2}{2\beta_1}}+\left(\log_{+}rc_1\right)^{\frac{\beta_2}{2}}+1
\lesssim 1+(\log_+r)^{\frac{\beta_2}{2}}+r^{-\frac{\beta_2}{2\beta_1}}+r^{-\frac{\beta_2}{2(\beta_1+2)}}\mu^{\frac{\beta_2}{2\beta_1}}
$$
with the implied constants depending on $\beta_1,\beta_2$ and $c_1$, as well as
\begin{align}
\mathbf{J}(rc_1,r^{\frac{2}{\beta_1+2}}\mu)^{2\sigma /\beta_2}&\lesssim
\left( \frac{r^{\frac{2}{\beta_1+2}}\mu+1}{rc_1} \right)^{\frac{\sigma}{\beta_1}}+(\log_+ r)^{\sigma}+1
\lesssim 1+(\log_+ r)^{\sigma}+r^{-\frac{\sigma}{\beta_1}}+r^{-\frac{\sigma}{\beta_1+2}}\mu^{\frac{\sigma}{\beta_1}}\nonumber\\
&\lesssim
\begin{cases}
r^{-\frac{\sigma}{\beta_1}}\bigl(1+(r^{\frac{2}{\beta_1+2}}\mu)^{\frac{\sigma}{\beta_1}}\bigr)&\mbox{if }0<r<1\\
\left(1+r^{-\frac{\sigma}{\beta_1+2}}\mu^{\frac{\sigma}{\beta_1}}\right)\log^{\sigma}(r+1)&\mbox{if }r\geq1
\end{cases}\label{estJbld}
\end{align}
with the implied constants depending on $\sigma,\beta_1,\beta_2$ and $c_1$.

On the other hand
\begin{eqnarray*}
\widehat{\mathbf{J}}(rc_1,rc_2,r^{\frac{2}{\beta_1+2}}\mu)
&=&r^{\frac{1}{\beta_1+2}}\mu^{1/2}+r^{1/2}c_2^{1/2}\mathbf{J}(rc_1,r^{\frac{2}{\beta_1+2}}\mu)\\
&\lesssim&r^{\frac{1}{\beta_1+2}}\mu^{1/2}+r^{1/2}+r^{1/2}(\log_+ r)^{\frac{\beta_2}{2}}+r^{-\frac{\beta_2-\beta_1}{2\beta_1}}+
r^{\frac{1}{2}-\frac{\beta_2}{2(\beta_1+2)}}\mu^{\frac{\beta_2}{2\beta_1}}\\
&\lesssim&r^{1/2}+r^{1/2}(\log_+ r)^{\frac{\beta_2}{2}}+r^{-\frac{\beta_2-\beta_1}{2\beta_1}}+
\left(r^{\frac{1}{\beta_1+2}}+r^{\frac{1}{2}-\frac{\beta_2}{2(\beta_1+2)}}\right)\mu^{\frac{\beta_2}{2\beta_1}}
\end{eqnarray*}
where, in the last line, we used that $\mu\gtrsim 1$ and $\dfrac{\beta_2}{2\beta_1}\geq\dfrac{1}{2}$
so that $\mu^{\frac{1}{2}}\lesssim\mu^{\frac{\beta_2}{2\beta_1}}$.
The implied constant here depends on $\sigma,\beta_1,\beta_2$ and $c_1,c_2$. Now note that
$\beta_2\geq\beta_1$ implies
$$
\frac{1}{\beta_1+2}\geq \frac{2-(\beta_2-\beta_1)}{2(\beta_1+2)}=\frac{1}{2}-\frac{\beta_2}{2(\beta_1+2)}.
$$
It follows that
\begin{equation}
\widehat{\mathbf{J}}(rc_1,rc_2,r^{\frac{2}{\beta_1+2}}\mu)
\lesssim%&\lesssim&
\begin{cases}
r^{-\frac{\beta_2-\beta_1}{2\beta_1}}\bigl(1+(r^{\frac{2}{\beta_1+2}}
\mu)^{\frac{\beta_2}{2\beta_1}}\bigr)&\mbox{if }0<r<1\\
\left(r^{\frac{1}{2}}+
r^{\frac{1}{\beta_1+2}}\mu^{\frac{\beta_2}{2\beta_1}}+r^{\frac{1}{2}}\right)\log^{\frac{\beta_2}{2}}(r+1)&\mbox{if }r\geq1
\end{cases}.\label{estJbld2}
\end{equation}

For $0<r<1$, we multiply \eqref{estJbld} with  \eqref{estJbld2} by using \eqref{z.2bis} and obtain  
\begin{equation}
	\mathcal{J}_{rV}(r,\lambda)\lesssim r^{-\frac{\sigma}{\beta_1}-\frac{\beta_2-\beta_1}{2\beta_1}}(1+(r^{\frac{2}{\beta_1+2}}\mu)^{\frac{\beta_2+2\sigma}{2\beta_1}}), \quad 0<r<1.\label{z.3}
\end{equation}
For $r\ge 1$, we take $a=0,b=-\frac{\sigma}{\beta_1+2},c=\frac{1}{2},d=\frac{1}{\beta_1+2},\alpha=\frac{\sigma}{\beta_1},\beta=\frac{\beta_2}{2\beta_1}$ and then obtain
 \[
\delta=(b-a)\beta-(d-c)\alpha= \frac{(\beta_1-\beta_2)\sigma}{2(\beta_1+2)\beta_1}.
\] 
Then $\delta=0$ if and only if $\beta_1=\beta_2$ or $\sigma=0$. 
Multiplying \eqref{estJbld} and \eqref{estJbld2} and using \eqref{z.2} when $\delta= 0$
(resp. \eqref{z.1} when $\delta\neq0$) we obtain
\begin{equation}
	\mathcal{J}_{rV}(r,\lambda)\lesssim \begin{cases}
		\left(r^{\frac{1}{2}}+r^{-\frac{\sigma}{\beta_1+2}+\frac{1}{\beta_1+2}}\mu^{\frac{\beta_2+2\sigma}{2\beta_1}}\right)\log^{\sigma+\frac{\beta_2}{2}}(r+1) & \mbox{if } (\beta_1-\beta_2)\sigma=0\\
		\left(r^{\frac{1}{2}}+r^{\widetilde{b}_{+}}\mu^{\frac{2\sigma+\beta_2}{2\beta_1}}\right)\log^{\sigma+\frac{\beta_2}{2}}(r+1)& \mbox{if } (\beta_1-\beta_2)\sigma\neq 0
	\end{cases},\quad r\ge 1\label{z.4}
\end{equation}
where
\[
\widetilde{b}_{+}:=\max\left( \frac{1}{2}- \frac{\sigma}{\beta_1+2},\frac{1}{\beta_1+2} \right)\ge \frac{1}{\beta_1+2}-\frac{\sigma}{\beta_1+2}. 
\]
Replacing $\mu$ in \eqref{z.3} and \eqref{z.4} by $r^{-\frac{2}{\beta_1+2}}\lambda$, and absorbing the $\log$ term in the power with the help of $\varepsilon$ we obtain the claimed inequality.

\medskip

To estimate $\mathcal{J}_{rV}$ under Assumption \ref{A2}, the main difference is that we need to estimate
\begin{eqnarray*}
\widehat{\mathbf{J}}(rc_1,r^{4/3}c_2,r^{\frac{2}{\beta_1+2}}\mu)
&=&r^{\frac{1}{\beta_1+2}}\mu^{1/2}+r^{2/3}c_2^{1/2}\mathbf{J}(rc_1,r^{\frac{2}{\beta_1+2}}\mu)\\
&\lesssim&r^{\frac{2}{3}}+r^{\frac{2}{3}-\frac{\beta_2}{2\beta_1}}+r^{\frac{1}{\beta_1+2}}\mu^{1/2}+
r^{\frac{2}{3}-\frac{\beta_2}{2(\beta_1+2)}}\mu^{\frac{\beta_2}{2\beta_1}}+r^{\frac{2}{3}} (\log_+r)^{\frac{\beta_2}{2}}\\
&\lesssim&r^{\frac{2}{3}}+r^{\frac{2}{3}-\frac{\beta_2}{2\beta_1}}+\left(r^{\frac{1}{\beta_1+2}}
+r^{\frac{2}{3}-\frac{\beta_2}{2(\beta_1+2)}}\right)\mu^{\frac{\beta_2}{2\beta_1}}++r^{\frac{2}{3}} (\log_+r)^{\frac{\beta_2}{2}}.
\end{eqnarray*}
There are now two cases to be distinguished since 
$$
\frac{1}{\beta_1+2}\leq\frac{2}{3}-\frac{\beta_2}{2(\beta_1+2)}
$$
if and only if $3\beta_2\leq 4\beta_1+2$ or equivalently $\dfrac{1}{2}\leq \dfrac{\beta_2}{2\beta_1}\leq\dfrac{2}{3}+\dfrac{1}{3\beta_1}$.

So assume first that $3\beta_2\leq 4\beta_1+2$, then
\begin{equation}
\widehat{\mathbf{J}}(rc_1,r^{4/3}c_2,r^{\frac{2}{\beta_1+2}}\mu)\lesssim
\begin{cases}
r^{\frac{2}{3}-\frac{\beta_2}{2\beta_1}}+r^{\frac{1}{\beta_1+2}}
\mu^{\frac{\beta_2}{2\beta_1}}
&\mbox{if }0< r< 1\\
\left(r^{\frac{2}{3}}
+r^{\frac{2}{3}-\frac{\beta_2}{2(\beta_1+2)}}\mu^{\frac{\beta_2}{2\beta_1}}\right)\log^{\frac{\beta_2}{2}}(r+1)
&\mbox{if }r\geq 1
\end{cases}.\label{estJbld3}
\end{equation}
For $0<r<1$, we take $a=-\dfrac{\sigma}{\beta_1}$, $b=-\dfrac{\sigma}{\beta_1+2}$,
$c=\dfrac{2}{3}-\dfrac{\beta_2}{2\beta_1}$, $d=\dfrac{1}{\beta_1+2}$,
$\alpha=\dfrac{\sigma}{\beta_1}$ and $\beta=\dfrac{\beta_2}{2\beta_1}$ so that
\[
\delta:=(b-a)\beta-(d-c)\alpha)=\frac{(2+4\beta_1-3\beta_2)\sigma}{6(\beta_1+2)\beta_1}.
\]
Hence $\delta\neq 0$ if and only if $3 \beta_2< 4 \beta_1+2$ and $\sigma\neq 0$. Using \eqref{z.1} and  \eqref{z.2} when $\delta\neq 0$ and $\delta=0$ respectively to multiply \eqref{estJbld} and \eqref{estJbld3}, we obtain
\begin{equation}
	\mathcal{J}_{rV}(r,\lambda)\lesssim \begin{cases}
		r^{\frac{2}{3}-\frac{\beta_2+2\sigma}{2\beta_1}}+r^{\frac{1-\sigma}{\beta_1+2}}\mu^{\frac{\beta_2+2\sigma}{2\beta_1}} & \mbox{if } (2+4\beta_1-3\beta_2)\sigma= 0\\
		r^{\frac{2}{3}-\frac{\beta_2+2\sigma}{2\beta_1}}+r^{\widetilde{b}_{-}}\mu^{\frac{\beta_2+2\sigma}{2\beta_1}} & \mbox{if } (2+4\beta_1-3\beta_2)\sigma\neq 0
	\end{cases}, \quad 0<r<1\label{z.5}
\end{equation}
where
$$
\tilde b_-:=\min\left(
\frac{2}{3}-\frac{\beta_2}{2\beta_1}-\frac{\sigma}{\beta_1+2},
\frac{1}{\beta_1+2}-\frac{\sigma}{\beta_1}\right)\leq
\frac{1}{\beta_1+2}-\frac{\sigma}{\beta_1+2}.
$$
For $r\ge 1$, we take  $a=0$, $b=-\dfrac{\sigma}{\beta_1+2}$, $c=\dfrac{2}{3}$, $d=\dfrac{2}{3}-\dfrac{\beta_2}{2\beta_1}$, $\alpha=\dfrac{\sigma}{\beta_1}$, $\beta=\dfrac{\beta_2}{2\beta_1}$ and then obtain
\[
\delta:=(b-a)\beta-(d-c)\alpha)=0.
\]
Using \eqref{z.2} to multiply \eqref{estJbld} and \eqref{estJbld3}, we obtain 
\begin{equation}
	\mathcal{J}_{rV}(r,\lambda)\lesssim r^{\frac{2}{3}}+r^{-\frac{\sigma}{\beta_1+2}+\frac{2}{3}-\frac{\beta_2}{2\beta_1}}\mu^{\frac{\beta_2+2\sigma}{2\beta_1}}, \quad  r\ge 1.\label{z.6}
\end{equation}
Replacing $\mu$ in \eqref{z.5} and \eqref{z.6} by $r^{-\frac{2}{\beta_1+2}}\lambda$, we obtain the claimed inequality.  

\smallskip

On the other hand, if $3 \beta_2>4\beta_1+2$, then
\begin{equation}
\widehat{\mathbf{J}}(rc_1,r^{4/3}c_2,r^{\frac{2}{\beta_1+2}}\mu)\lesssim
\begin{cases}
r^{\frac{2}{3}-\frac{\beta_2}{2\beta_1}}+r^{\frac{2}{3}-\frac{\beta_2}{2(\beta_1+2)}}
\mu^{\frac{\beta_2}{2\beta_1}}
&\mbox{if }0< r< 1\\
\left(r^{\frac{2}{3}}
+r^{\frac{1}{\beta_1+2}}\mu^{\frac{\beta_2}{2\beta_1}}\right)\log^{\frac{\beta_2}{2}}(r+1)
&\mbox{if }r\geq 1
\end{cases}.\label{estJbld4}
\end{equation}
For $0<r<1$, we take  $a=-\dfrac{\sigma}{\beta_1}$,
$b=-\dfrac{\sigma}{\beta_1+2}$, $c=\dfrac{2}{3}-\dfrac{\beta_2}{2\beta_1}$, $d=\dfrac{2}{3}-\dfrac{\beta_2}{2(\beta_1+2)}$, $\alpha=\dfrac{\sigma}{\beta_1}$, $\beta=\dfrac{\beta_2}{2\beta_1}$ and then obtain
\[
\delta:=(b-a)\beta-(d-c)\alpha=0.
\] 
Using \eqref{z.2} to multiply \eqref{estJbld} and \eqref{estJbld3}, we obtain
\begin{equation}
	\mathcal{J}_{rV}(r,\lambda)\lesssim r^{\frac{2}{3}-\frac{\beta_2+2\sigma}{2\beta_1}}+r^{-\frac{\sigma}{\beta_1+2}+\frac{2}{3}-\frac{\beta_2}{2(\beta_1+2)}}\mu^{\frac{\beta_2+2\sigma}{2\beta_1}},\quad 0<r<1.\label{z.7}
\end{equation}
For $r\ge 1$, we take  $a=0$, $b=-\dfrac{\sigma}{\beta_1+2}$,
$c=\dfrac{2}{3}$, $d=\dfrac{1}{\beta_1+2}$,
$\alpha=\dfrac{\sigma}{\beta_1}$, $\beta=\dfrac{\beta_2}{2\beta_1}$ and then obtain 
\[
\delta:=(b-a)\beta-(d-c)\alpha=-\frac{(3 \beta_2-4 \beta_1-2)\sigma}{6(\beta_1+2)\beta_1}\le 0.
\]
Hence $\delta\neq 0$ if and only if $\sigma\neq 0$. Using \eqref{z.1} and \eqref{z.2} when $\delta\neq 0$ and $\delta=0$ respectively to multiply \eqref{estJbld} and \eqref{estJbld4}, we obtain
\begin{equation}
	\mathcal{J}_{rV}(r,\lambda)\lesssim \begin{cases}
		\left(r^{\frac{2}{3}}+r^{\frac{1}{\beta_1+2}}\mu^{\frac{\beta_2}{2\beta_1}}\right)\log^{\frac{\beta_2}{2}}(r+1)& \mbox{if } \sigma=0\\
		\left(r^{\frac{2}{3}}+r^{\widetilde{b}_{+}}\mu^{\frac{\beta_2+2\sigma}{2\beta_1}}\right)\log^{\sigma+\frac{\beta_2}{2}}(r+1)& \mbox{if } \sigma\neq 0
	\end{cases},
	r\ge 1\label{z.8}
\end{equation}
where
$$
\tilde b_+=\max\left(-\frac{\sigma}{\beta_1+2}+\frac{2}{3},\frac{1}{\beta_1+2}\right)
\geq\frac{1}{\beta_1+2}-\frac{\sigma}{\beta_1+2}
$$
Absorbing the $\log$ term in the power with the help of $\varepsilon$, we again conclude by replacing $\mu$ in \eqref{z.7} and \eqref{z.8}  by $r^{-\frac{2}{\beta_1+2}}\lambda$.
\end{proof}

\subsection{From the spectral inequality to observability of scaled Schr\"odinger operators}
\label{subsecobs}

We can now prove observability inequalities for ``scaled'' Schr\"odinger operators $H_{rV}=-\Delta_x+rV(x)$
on $\R^n$ in which we give an explicit dependence on $r$.
To do so, we will use the following result from \cite{nakic2020sharp} that allows to go
from a spectral inequality to an observability inequality. ({\it see} \cite[Theorem 2.1]{beauchard2018null} for a similar 
result in which constants are less explicit).

\begin{theorem}[{\cite[Theorem 2.8]{nakic2020sharp}}]\label{thm2.1d}
Let $P$ be a non-negative selfadjoint operator on $L^2\left( \R^{n} \right)$, $s>0$ and
$\omega \subset \R^{n}$ be measurable. Suppose that there are $\alpha_0\geq 1$, $\alpha_1\ge 0$ and $0<\zeta <s$ such that, for all $\lambda \ge 0$ and $\phi \in \mathcal{E}_\lambda(P)$,
\begin{equation}\label{2.1c}
\|\phi\|^2_{L^2(\R^{n})}\le \alpha_0 e^{\alpha_1\lambda^{\zeta}}\|\phi\|^2_{L^2(\Omega)}.
\end{equation}
Then there exist positive constants $\kappa_1,\kappa_2,\kappa_3>0$ depending only on $n,\zeta$ and $s$, such that for all $T>0$ and $g \in L^2(\R^{n})$ we have the observability estimate
\[
\|e^{-tP^s}g\|^2_{L^2(\R^{n})}\le \frac{C_{\mathrm{obs}}}{T}\int_0^{T}\|e^{-tP^s}g\|^2_{L^2(\omega)}\d t,
\] 
where the positive constant $C_{\mathrm{obs}}>0$ is given by
\begin{equation}\label{2.2c}
C_{\mathrm{obs}}=\kappa_1 \alpha_0^{\kappa_2}
\exp\left( \kappa_3 \alpha_1^{\frac{s}{s-\zeta}}T^{-\frac{\zeta}{s-\zeta}}  \right).
\end{equation}
\end{theorem}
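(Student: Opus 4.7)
The plan is to run the classical Lebeau--Robbiano telescoping argument, balancing the hypothesised spectral inequality \eqref{2.1c} against the dissipative decay of $e^{-tP^s}$ on high-frequency spectral subspaces. Denote by $\pi_\lambda$ the spectral projection of $P$ onto $\mathcal{E}_\lambda(P)$. The two building blocks are, on one hand, the dissipation estimate from the spectral theorem,
\[
\|e^{-tP^s}(I-\pi_\lambda)g\|_{L^2(\R^n)}\le e^{-t\lambda^s}\|g\|_{L^2(\R^n)}, \qquad t,\lambda\ge 0,
\]
and, on the other hand, the observation of low frequencies: since $\pi_\lambda e^{-tP^s}g\in\mathcal{E}_\lambda(P)$ by invariance of spectral subspaces, hypothesis \eqref{2.1c} gives
\[
\|\pi_\lambda e^{-tP^s}g\|_{L^2(\R^n)}^2\le \alpha_0 e^{\alpha_1\lambda^\zeta}\|\pi_\lambda e^{-tP^s}g\|_{L^2(\omega)}^2.
\]

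Next I would telescope. Fix two geometric sequences: a partition $0=T_0<T_1<\cdots$ of $(0,T)$ with $T_{k+1}-T_k=\tfrac{T}{2^{k+1}}$, and cutoff levels $\lambda_k=K\,2^{k/(s-\zeta)}$, where $K>0$ is a free parameter to be tuned at the end. Setting $u_k:=\|e^{-T_kP^s}g\|_{L^2(\R^n)}$ and splitting $e^{-T_{k+1}P^s}g=\pi_{\lambda_k}e^{-T_{k+1}P^s}g+(I-\pi_{\lambda_k})e^{-T_{k+1}P^s}g$, I would combine the two building blocks, using an averaging in time over the slab $(T_k,T_{k+1})$ for the low-frequency part, to obtain a recursion of the shape
\[
u_{k+1}^2\le \frac{\alpha_0 e^{\alpha_1\lambda_k^\zeta}}{T_{k+1}-T_k}\int_{T_k}^{T_{k+1}}\|e^{-tP^s}g\|_{L^2(\omega)}^2\,\mathrm{d}t + e^{-2(T_{k+1}-T_k)\lambda_k^s}u_k^2.
\]

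The crucial tuning step is that the two exponents must balance geometrically in $k$: with the choices above, $\alpha_1\lambda_k^\zeta\sim\alpha_1K^\zeta 2^{k\zeta/(s-\zeta)}$ and $(T_{k+1}-T_k)\lambda_k^s\sim TK^s 2^{k\zeta/(s-\zeta)}$, both growing like $2^{k\zeta/(s-\zeta)}$ because $-1+s/(s-\zeta)=\zeta/(s-\zeta)$. Picking $K^{s-\zeta}$ to be a sufficiently large multiple of $\alpha_1/T$ forces the dissipation factor $e^{-2(T_{k+1}-T_k)\lambda_k^s}$ to dominate the spectral cost $e^{\alpha_1\lambda_k^\zeta}$ in a summable way. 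Telescoping the recursion then yields
\[
\|e^{-TP^s}g\|_{L^2(\R^n)}^2\le C_{\mathrm{obs}}\int_0^T\|e^{-tP^s}g\|_{L^2(\omega)}^2\,\mathrm{d}t,
\]
with the leading contribution to $C_{\mathrm{obs}}$ coming from the first slab $k=0$: $C_{\mathrm{obs}}\sim \alpha_0 T^{-1}\exp(\alpha_1 K^\zeta)\sim \alpha_0 T^{-1}\exp\bigl(c\,\alpha_1^{s/(s-\zeta)}T^{-\zeta/(s-\zeta)}\bigr)$, which matches \eqref{2.2c}.

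The main obstacle, and what pins down the sharp exponent $s/(s-\zeta)$, is to make the absorption of the high-frequency residual fully rigorous: at each step one must verify that the tuning of $K$ does produce a geometrically decaying tail, so that the telescoped sum converges and the boundary term $\bigl(\prod_k e^{-2(T_{k+1}-T_k)\lambda_k^s}\bigr)\|g\|_{L^2(\R^n)}^2$ vanishes as $k\to\infty$. The standard device is to introduce an auxiliary sequence of the form $v_k^2:=u_k^2-\delta\sum_{j\ge k}A_j J_j$ (with $A_j$ and $J_j$ the coefficients and observation integrals above) and to check that the recursion for $v_k$ becomes a strict geometric contraction once $K$ is large enough in terms of $\alpha_1/T$. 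This is precisely the argument carried out in \cite[Section 2]{nakic2020sharp}; the explicit form \eqref{2.2c} is extracted by tracking how the resulting geometric sum depends on $K$ and on $\alpha_0$, noting that the assumption $\alpha_0\ge 1$ is what allows the prefactor to be absorbed into a power $\alpha_0^{\kappa_2}$.
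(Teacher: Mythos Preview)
Your sketch of the Lebeau--Robbiano telescoping argument is a correct outline of how \cite[Theorem~2.8]{nakic2020sharp} is actually proved in that reference. Note, however, that the present paper does not give its own proof of this theorem: it is quoted as a black box from \cite{nakic2020sharp}, with only the remark that the general case $s>0$ follows from the case $s=1$ via the transformation formula $\mathcal{E}_\lambda(P^s)=\mathcal{E}_{\lambda^{1/s}}(P)$ for spectral measures (so that a spectral inequality for $P$ with exponent $\zeta$ becomes one for $P^s$ with exponent $\zeta/s<1$, and one applies the $s=1$ version to $P^s$). Your approach instead runs the telescoping directly for $e^{-tP^s}$, which is equally valid and arguably more transparent about where the exponent $s/(s-\zeta)$ comes from.
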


In \cite{nakic2020sharp}, the theorem is stated with $s=1$, but this form follows directly from
the transformation formula for spectral measures ({\it see} \cite[Proposition~4.24]{schmudgen2012unbounded}), for all $s>0$ and $\lambda \ge 0$ we have
\begin{equation}\label{1f}
	\mathcal{E}_\lambda(P^{s})=\mathcal{E}_{\lambda ^{\frac{1}{s}}}(P).
\end{equation}

We can now prove the following:

\begin{proposition}\label{prp2.2}
Let $r>0$ and $V\in L^1_{\mathrm{loc}}(\R^n)$ satisfying one of Assumptions \ref{A1}-\ref{A2},
with corresponding constants $c_1,c_2,\beta_1,\beta_2$.
Let $H_{rV}=-\Delta+rV$ be the Schr\"odinger operator associated to $rV$. 
Let $0<\gamma<1$ and $\sigma\geq 0$ and $\omega\subset\R^n$ be a $(\gamma,\sigma)$-distributed set.
Let $a_\pm$, $b_\pm$ and $\varepsilon$ be defined in Proposition \ref{prop:powers}.
Let $\zeta=\frac{\beta_2+2\sigma}{2\beta_1}$ and $s>\zeta$.

For $T>0$ and  $g\in L^2(\R^{n})$ we have
\begin{equation*}
\|e^{-TH_{rV}^{s}}g\|^2_{L^2\left( \R^{n} \right) }
\leq \frac{C_{\mathrm{obs}}(T,s,rV,\omega)}{T}\int_0^{T}\|e^{-t H_{r}^{s}}g\|^2_{L^2(\omega)}\d t
\end{equation*}
where the positive constant $C_{\mathrm{obs}}$ is given by
\begin{equation}\label{2.7c}
 \begin{cases}
  C_0\exp\ent{C_1\log\left(\frac{1}{\gamma}\right)r^{a_-}
+C_2T^{-\frac{\zeta}{s-\zeta}}\log^{\frac{s}{s-\zeta}}\left(\dfrac{1}{\gamma}\right)r^{\frac{s}{s-\zeta}b_{-}}
-C_3Tr^{\frac{2s}{\beta_1+2}}} &\mbox{for }  0<r<1\\
C_0\exp\ent{C_1\log\left(\frac{1}{\gamma}\right)r^{a_++\varepsilon}
+C_2T^{-\frac{\zeta}{s-\zeta}}\log^{\frac{s}{s-\zeta}}\left(\dfrac{1}{\gamma}\right)r^{\frac{s}{s-\zeta}(b_{+}+\varepsilon)}
-C_3Tr^{\frac{2s}{\beta_1+2}}}&\mbox{for } r\ge 1
\end{cases} 
\end{equation}
with positive constants $C_0,C_1,C_2,C_3$ depending on $n,\sigma,\varepsilon$ and on $c_1,c_2,\beta_1,\beta_2$.
\end{proposition}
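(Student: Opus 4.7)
The plan is to combine the precise spectral inequality \eqref{1.11dr} of Proposition \ref{prop:powers} with the abstract observability result of Theorem \ref{thm2.1d}, and then sharpen the resulting constant by chaining in a free dissipation step provided by Corollary \ref{cor:sem}.

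First I would put \eqref{1.11dr} into the form \eqref{2.1c} required by Theorem \ref{thm2.1d}. Squaring \eqref{1.11dr} and plugging in \eqref{eq:expr1}, the constant factors as
\[
\left(\tfrac{1}{\gamma}\right)^{2C_{V,\omega}\mathcal{J}_{rV}(\lambda)}
= \exp\bigl(2C_{V,\omega}\log(1/\gamma)\,r^{a_{\pm}}\bigr)\exp\bigl(2C_{V,\omega}\log(1/\gamma)\,r^{b_{\pm}}\lambda^{\zeta}\bigr),
\]
where $(a_{-},b_{-})$ is used for $0<r<1$ and $(a_{+}+\varepsilon,b_{+}+\varepsilon)$ for $r\ge 1$; the inequality extends trivially to $\lambda<\lambda_{0}(rV)$ since $\mathcal{E}_{rV}(\lambda)=\{0\}$ in that range. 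Theorem \ref{thm2.1d} then applies to $P=H_{rV}$ with $\alpha_{0}=\exp(2C_{V,\omega}\log(1/\gamma)\,r^{a_{\pm}})\ge 1$ and $\alpha_{1}=2C_{V,\omega}\log(1/\gamma)\,r^{b_{\pm}}$. Applying it on the interval $[0,T/2]$ rather than $[0,T]$ yields
\[
\|e^{-(T/2)H^{s}_{rV}}g\|^{2}_{L^{2}(\R^{n})}
\le \frac{2\widetilde{C}_{\mathrm{obs}}}{T}\int_{0}^{T/2}\|e^{-tH^{s}_{rV}}g\|^{2}_{L^{2}(\omega)}\,\mathrm{d}t,
\]
with $\widetilde{C}_{\mathrm{obs}}=\kappa_{1}\alpha_{0}^{\kappa_{2}}\exp\bigl(\kappa_{3}\alpha_{1}^{s/(s-\zeta)}(T/2)^{-\zeta/(s-\zeta)}\bigr)$. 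Expanding $\alpha_{0}^{\kappa_{2}}$ and $\alpha_{1}^{s/(s-\zeta)}$ already reproduces the first two terms inside the exponent of \eqref{2.7c}.

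To generate the negative contribution $-C_{3}Tr^{2s/(\beta_{1}+2)}$ I would exploit the semigroup decay of Corollary \ref{cor:sem}. Writing $e^{-TH^{s}_{rV}}g=e^{-(T/2)H^{s}_{rV}}\bigl(e^{-(T/2)H^{s}_{rV}}g\bigr)$ and applying \eqref{eq:semigroup} at time $T/2$ gives
\[
\|e^{-TH^{s}_{rV}}g\|^{2}_{L^{2}(\R^{n})}\le e^{-Tr^{2s/(\beta_{1}+2)}\mu_{*}}\|e^{-(T/2)H^{s}_{rV}}g\|^{2}_{L^{2}(\R^{n})}.
\]
Chaining this with the half-time observability estimate above and enlarging the time integral from $[0,T/2]$ to $[0,T]$ yields exactly \eqref{2.7c} with $C_{3}=\mu_{*}$, $C_{0}=2\kappa_{1}$, $C_{1}=2C_{V,\omega}\kappa_{2}$ and $C_{2}=\kappa_{3}\,2^{\zeta/(s-\zeta)}(2C_{V,\omega})^{s/(s-\zeta)}$.

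The main task is then pure bookkeeping: tracking how the cut at $r=1$ in \eqref{eq:expr1} propagates through the above manipulations to produce the two branches of \eqref{2.7c}. By Remark \ref{rem:r=1} the precise splitting point is immaterial up to an adjustment of constants, so gluing the two regimes raises no difficulty, and the dependencies of $C_{0},C_{1},C_{2},C_{3}$ on $n,\sigma,\varepsilon,c_{1},c_{2},\beta_{1},\beta_{2}$ are inherited from $C_{V,\omega}$, $\mu_{*}$ and the universal constants $\kappa_{1},\kappa_{2},\kappa_{3}$ of Theorem \ref{thm2.1d}.
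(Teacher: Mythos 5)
Your proposal is correct and follows essentially the same route as the paper: convert the spectral inequality of Proposition \ref{prop:powers} into the form \eqref{2.1c}, apply Theorem \ref{thm2.1d} at time $T/2$, and then chain in the semigroup decay of Corollary \ref{cor:sem} over the remaining half-interval to produce the term $-C_3 T r^{2s/(\beta_1+2)}$ before enlarging the time integral to $[0,T]$. The only (harmless) deviations are bookkeeping ones — you obtain $C_3=\mu_*$ where the paper records $\mu_*/2$, and you explicitly note the trivial extension to $\lambda<\lambda_0(rV)$, which the paper leaves implicit.
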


\begin{proof}
For $P=H_{rV}$, Inequality \eqref{2.1c} was established in
Proposition \ref{prop:powers}. Recall that $a_-,b_-$ is for $0<r<1$ and $a_+,b_+$ is for $r\geq 1$. For $0<r<1$,
we can apply Theorem \ref{thm2.1d} with $\alpha_0=C\exp\ent{C_{V,\omega}\log\left(\frac{1}{\gamma}\right)r^{a_-}}\geq 1$ (we can assume that $C\geq 1$), $\alpha_1=C\log\left(\dfrac{1}{\gamma}\right)r^{b_-}$. we thus get, the following:

For $T>0$ and  $g\in L^2(\R^{n})$ we have
\begin{equation*}
\|e^{-\frac{T}{2}H_{r}^{s}}g\|^2_{L^2\left( \R^{n} \right) }
\leq \frac{\tilde C_{\mathrm{obs}}(T/2,s,rV,\omega)}{T}\int_0^{T/2}\|e^{-t H_{r}^{s}}g\|^2_{L^2(\omega)}\d t
\end{equation*}
where the positive constant $\tilde C_{\mathrm{obs}}(T/2,s,rV,\omega)$ is given by
\begin{equation}\label{2.7d}
\tilde C_{\mathrm{obs}}(T/2,s,rV,\omega)
= C_0\exp\ent{C_1\log\left(\frac{1}{\gamma}\right)r^{a_-}
+C_2T^{-\frac{\zeta}{s-\zeta}}\log^{\frac{s}{s-\zeta}}\left(\dfrac{1}{\gamma}\right)r^{\frac{s}{s-\zeta}b_-}}
\end{equation}
with constants $C_0,C_1,C_2>0$ depending on $n,\sigma$ and on $c_1,c_2,\beta_1,\beta_2$ of Assumptions \ref{A1}-\ref{A2} only.

Next, using Corollary \ref{cor:sem}, we get
\begin{equation}\label{2.7e}
\|e^{-TH_{r}^{s}}g\|^2_{L^2\left( \R^{n} \right) }=
\|e^{-\frac{T}{2}H_{r}^{s}}\bigl(e^{-\frac{T}{2}H_{r}^{s}}g\bigr)\|^2_{L^2\left( \R^{n} \right) }
\leq e^{-\frac{\mu_*}{2}r^{\frac{2s}{\beta_1+2}}T}
\|e^{-\frac{T}{2}H_{r}^{s}}g\|^2_{L^2\left( \R^{n} \right) }
\end{equation}
with $\mu_*$ depending only on $c_1,\beta_1$ and $n$.
Combining \eqref{2.7d} and \eqref{2.7e} we obtain \eqref{2.7c} with $C_3=\mu_*/2$
and $C_{\mathrm{obs}}(T,s,rV,\omega)=\tilde C_{\mathrm{obs}}(T/2,s,rV,\omega)\exp\big(-C_3Tr^{\frac{2s}{\beta_1+2}}\big)$.

For $r\ge 1$, the proof is the same by replacing $a_-$ and $b_-$ with $a_++\varepsilon$ and $b_++\varepsilon$, and the constants will also depend on $\varepsilon$. 
\end{proof}

Simple calculus shows the following:

\begin{lemma}\label{lem:prp2.2}
With the notation and conditions of Proposition \ref{prp2.2}
\begin{enumerate}
\renewcommand{\theenumi}{\roman{enumi}}
\item for every $T>0$, $\sup_{0<r<1}C_{\mathrm{obs}}(T,s,rV,\omega)<+\infty$ if and only if $a_-,b_-\geq 0$ in which case we obtain
$$
\sup_{0<r<1}C_{\mathrm{obs}}(T,s,rV,\omega)\leq A_0(T,s,\gamma):=
C_0\left(\frac{1}{\gamma}\right)^{C_2}\exp\ent{
C_2T^{-\frac{\zeta}{s-\zeta}}\log^{\frac{s}{s-\zeta}}\left(\dfrac{1}{\gamma}\right)}.
$$
\item If $\dfrac{2s}{\beta_1+2}>\nu:=\max\left(a_+,\dfrac{s}{s-\zeta}b_+\right)$ then
$\sup_{r\geq 1}C_{\mathrm{obs}}(T,s,rV,\omega)<+\infty$. Further, 
$$
\sup_{r\geq 1}C_{\mathrm{obs}}(T,s,rV,\omega)\leq A_1(T,s,\gamma):=
\begin{cases}C_0\exp C_4 T^{-\left(\delta+\frac{\zeta}{s-\zeta}(1+\delta)\right)}\log^{\frac{s}{s-\zeta}(1+\delta)}\left(\frac{1}{\gamma}\right)&\mbox{for }T\leq 1\\
C_0\exp C_5 T^{-\delta}\log^{\frac{s}{s-\zeta}(1+\delta)}\left(\frac{1}{\gamma}\right)&\mbox{otherwise}
\end{cases}
$$
where $\delta=\left(\dfrac{s}{\beta_1+2}-\dfrac{\nu}{2}\right)^{-1}$ and $C_4,C_5$ do not depend on $T,\gamma$.
%\item If $\frac{2s}{\beta_1+2}<\nu$, then $\sup_{r\geq 1}C_{\mathrm{obs}}(rV,\omega)=+\infty$.
\end{enumerate} 
\end{lemma}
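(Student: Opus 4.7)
My plan is to read off both conclusions from the explicit formula \eqref{2.7c}. Write $C_{\mathrm{obs}}(T,s,rV,\omega)=C_0\exp F(r)$, so that $F(r)=A_1r^{\alpha_1}+A_2r^{\alpha_2}-Br^{\mu_0}$ with $\mu_0=2s/(\beta_1+2)$, $B=C_3T$, $A_1=C_1\log(1/\gamma)$, $A_2=C_2T^{-\zeta/(s-\zeta)}\log^{s/(s-\zeta)}(1/\gamma)$, and $(\alpha_1,\alpha_2)=(a_-,(s/(s-\zeta))b_-)$ for $0<r<1$ or $(a_++\varepsilon,(s/(s-\zeta))(b_++\varepsilon))$ for $r\geq 1$. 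For part (i), the coercive term is non-positive on $(0,1)$ and can be dropped. If $a_-\geq 0$ and $b_-\geq 0$, both $r^{\alpha_1},r^{\alpha_2}\leq 1$, so $F(r)\leq A_1+A_2$, which gives the bound $A_0(T,s,\gamma)$ after observing that $A_1\leq A_1\log^{s/(s-\zeta)-1}(1/\gamma)$ for $\gamma$ small. Conversely, if $a_-<0$ or $b_-<0$, the corresponding monomial blows up as $r\to 0^+$ for fixed $\gamma,T$, forcing $\sup_{0<r<1}C_{\mathrm{obs}}=+\infty$. This settles the equivalence.

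For part (ii), I exploit the coercive term. The hypothesis $\mu_0>\nu$ lets me choose $\varepsilon$ so small that $\mu_0>\max(\alpha_1,\alpha_2)$, with indeed $\mu_0-\alpha_i\geq(\mu_0-\nu)/2$. Splitting $-Br^{\mu_0}=-\tfrac{1}{2}Br^{\mu_0}-\tfrac{1}{2}Br^{\mu_0}$ and applying the one-variable estimate
$$\sup_{r>0}\bigl(Ar^{\alpha}-\tfrac{B}{2}r^{\mu_0}\bigr)=\kappa_{\alpha,\mu_0}\,A^{\mu_0/(\mu_0-\alpha)}\,B^{-\alpha/(\mu_0-\alpha)}$$
to each positive term paired with its half of the coercive term, I sum the two contributions and substitute the values of $A_1,A_2,B$. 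The result is two products of powers of $T^{-1}$ and $\log(1/\gamma)$; the ratios $\mu_0/(\mu_0-\alpha_i)$ and $\alpha_i/(\mu_0-\alpha_i)$ are controlled by expressions in $\delta=2/(\mu_0-\nu)$, which produce the log-exponent $(s/(s-\zeta))(1+\delta)$ and the combined $T$-exponent $-(\delta+(\zeta/(s-\zeta))(1+\delta))$ announced in the statement. The dichotomy $T\leq 1$ versus $T>1$ simply reflects the fact that $T^{-\zeta/(s-\zeta)}$ dominates $1$ in the first case (so both negative $T$-powers survive) while it is bounded by $1$ in the second (leaving the single dominant factor $T^{-\delta}$).

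The main obstacle is the bookkeeping of exponents: since the $\alpha_i$ do not in general equal $\nu$, one must use the monotonicity of $t\mapsto\mu_0/(\mu_0-t)$ together with the above choice of $\varepsilon$ to replace each ratio by its value at $t=\nu$ (namely $\delta\mu_0/2$, respectively $\nu\delta/2$), up to a multiplicative error absorbed cleanly into the constants $C_4$ and $C_5$. Everything else reduces to elementary one-variable calculus applied to the exponent $F(r)$.
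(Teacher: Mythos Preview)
Your approach is correct and close in spirit to the paper's, but the two diverge at one point in part~(ii). The paper does \emph{not} split the coercive term; instead, since $r\geq 1$ it first replaces both positive monomials by a single one with the common exponent $u':=\nu+\frac{s}{s-\zeta}\varepsilon$ and coefficient $A\lesssim\min(1,T)^{-\zeta/(s-\zeta)}\log^{s/(s-\zeta)}(1/\gamma)$ (absorbing $A_1$ into $A_2$ via $\log(1/\gamma)\lesssim\log^{s/(s-\zeta)}(1/\gamma)$), and then performs one optimization of $Ar^{u'}-Br^{\mu_0}$. With the specific choice $\varepsilon=\tfrac12(\mu_0-\nu)\frac{s-\zeta}{s}$ this gives exactly $\mu_0-u'=1/\delta$, so the final exponents $1+\delta$ and $\delta$ drop out of the single sup-formula without any further monotonicity argument.

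Your route of halving $-Br^{\mu_0}$ and optimizing twice also works, but the bookkeeping you flag is real: the resulting ratios are $\mu_0/(\mu_0-\alpha_i)$ and $\alpha_i/(\mu_0-\alpha_i)$, which (with the same $\varepsilon$) are bounded by $\mu_0\delta$ and $(\mu_0+\nu)\delta/2$, not by $1+\delta$ and $\delta$; you then need an extra step to reconcile these with the stated exponents, and you must also treat separately the case $\alpha_2\le 0$ (which does occur, since $b_+$ can be negative), where the one-variable formula is not needed and the term is simply $\le A_2$ on $r\ge 1$. The paper's combine-then-optimize trick sidesteps both issues in one move. Part~(i) is handled identically in both.
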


\begin{proof}
The first bound is trivial, for the second one, we use that
$$
\sup_{r\geq 1}C_{\mathrm{obs}}(T,s,rV,\omega)
\leq
C_0\sup_{r\geq 1}\exp\ent{C_1\min(1,T)^{-\frac{\zeta}{s-\zeta}}\log^{\frac{s}{s-\zeta}}
\left(\frac{1}{\gamma}\right)r^{\nu+\frac{s}{s-\zeta}\varepsilon}
-C_3Tr^{\frac{2s}{\beta_1+2}}}
$$
where $\varepsilon$ is chosen to be 
\begin{equation}
    0<\varepsilon=\frac{1}{2}\left(\frac{2s}{\beta_1+2}-\nu\right)\frac{s-\zeta}{s}<\left(\frac{2s}{\beta_1+2}-\nu\right) \frac{s-\zeta}{s}.
\end{equation}
and then that $\sup_{r\geq 1}\exp(Ar^u-Br^v)=\exp\dfrac{v-u}{u}A\left(\dfrac{Au}{Bv}\right)^{1/(v-u)}$ when $0< u\leq v$.
\end{proof}

\begin{remark}\label{rem:vtildev}
From Remark \ref{rq:2pot} we get that, 
if $V,\tilde V$ both satisfy Assumption \ref{A1} or Assumption \ref{A2} with same
paramerters $c_1$, $c_2$, $\beta_1$ and $\beta_2$, then 
$$
\sup_{r\geq 0}C_{\mathrm{obs}}(T,s,rV+\tilde V,\omega)
\lesssim\sup_{r\geq 1}C_{\mathrm{obs}}(T,s,rV,\omega).
$$
\end{remark}

\begin{lemma}\label{lem:cond}
With the notation and conditions of Proposition \ref{prp2.2} and assuming that $s>\zeta$,
the condition $\dfrac{2s}{\beta_1+2}>\nu:=\max\left(a_+,\dfrac{s}{s-\zeta}b_+\right)$
(resp. $\dfrac{2s}{\beta_1+2}=\nu$)
simplifies to the following:
\begin{enumerate}
\renewcommand{\theenumi}{\roman{enumi}}
\item if $V$ satisfies Assumption \ref{A1}, 
$s>s_{A1}:=\dfrac{\beta_1+2}{4}$;

\item if $V$ satisfies Assumption \ref{A2}, 
$s>s_{A2}:=\dfrac{\beta_1+2}{3}$.
\end{enumerate}
\end{lemma}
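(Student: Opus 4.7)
The plan is to observe that $\tfrac{2s}{\beta_1+2}>\nu$, with $\nu=\max\bigl(a_+,\tfrac{s}{s-\zeta}b_+\bigr)$, splits into two independent conditions,
\begin{equation*}
\mathrm{(A)}\quad \frac{2s}{\beta_1+2}>a_+,\qquad\mathrm{(B)}\quad \frac{2s}{\beta_1+2}>\frac{s}{s-\zeta}\,b_+,
\end{equation*}
and to treat them separately. Under the standing hypothesis $s>\zeta\geq 0$ we have $s-\zeta>0$, so (A) is equivalent to $s>\tfrac{a_+(\beta_1+2)}{2}$, giving exactly $s>s_{A1}=\tfrac{\beta_1+2}{4}$ when $a_+=\tfrac12$ (Assumption \ref{A1}) and $s>s_{A2}=\tfrac{\beta_1+2}{3}$ when $a_+=\tfrac23$ (Assumption \ref{A2}). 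Condition (B) is automatic when $b_+\le 0$, and is otherwise equivalent to $s>\zeta+\tfrac{b_+(\beta_1+2)}{2}$.

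The content of the lemma is therefore that (B) is always absorbed into (A), i.e.\ in every subcase of Proposition \ref{prop:powers}
\begin{equation*}
\zeta+\frac{b_+(\beta_1+2)}{2}\leq \frac{a_+(\beta_1+2)}{2}.
\end{equation*}
The key simplification is that each expression for $b_+$ in Proposition \ref{prop:powers} contains a summand $-\tfrac{2\zeta}{\beta_1+2}$ (explicit when $\sigma\neq 0$, and implicit via $\zeta=\tfrac{\beta_2+2\sigma}{2\beta_1}$ when $\sigma=0$), so the $+\zeta$ on the left cancels upon multiplying $b_+$ by $\tfrac{\beta_1+2}{2}$. After this cancellation the left-hand side reduces, in the respective subcases, to one of $\tfrac{1-\sigma}{2}$, $\tfrac12$, $\max\bigl(\tfrac{a_+(\beta_1+2)}{2}-\tfrac{\sigma}{2},\tfrac12\bigr)$, or (for the $\beta_*\le 0$ case of Assumption \ref{A2}) to $\tfrac{\beta_1+2}{3}$ minus a nonnegative quantity $\tfrac{\sigma}{2}+\tfrac{\beta_2(\beta_1+2)}{4\beta_1}$. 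In each instance the required bound is immediate from $\beta_1,\beta_2,\sigma\ge 0$ together with $\beta_1>0$ (which gives $\tfrac12<\tfrac{\beta_1+2}{4}$ and $\tfrac12<\tfrac{\beta_1+2}{3}$).

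The corresponding equality statement $\tfrac{2s}{\beta_1+2}=\nu$ at $s=s_{A1}$ or $s=s_{A2}$ follows by the same cancellation: equality holds in (A) by construction, while the strict version of (B) persists, so the maximum defining $\nu$ is attained on the $a_+$ branch. I expect no real mathematical obstacle: the argument is entirely algebraic, and the only mildly delicate point is bookkeeping, namely keeping track of which of the five formulas for $b_+$ in Proposition \ref{prop:powers} applies according to the sign of $(\beta_1-\beta_2)\sigma$ or $\beta_*\sigma$.
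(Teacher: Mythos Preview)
Your proposal is correct and follows essentially the same route as the paper: both split $\tfrac{2s}{\beta_1+2}>\nu$ into the two conditions (A) and (B), identify (A) with $s>s_{A1}$ or $s>s_{A2}$, rewrite (B) as $s>\zeta+\tfrac{b_+(\beta_1+2)}{2}$ when $b_+>0$, and then verify case by case that this threshold never exceeds $s_{A1}$ (resp.\ $s_{A2}$). Your observation that every formula for $b_+$ carries a summand $-\tfrac{2\zeta}{\beta_1+2}$, so that the $+\zeta$ cancels uniformly, is a tidy organizing principle that the paper does not make explicit; the paper instead argues in each subcase separately (and, for the max-branches, uses $\zeta\ge\tfrac12$ to discard the $\tfrac{1}{\beta_1+2}$ alternative before computing), but the underlying algebra is identical.
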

\iffalse
\begin{remark} 
Note that $\zeta<\dfrac{\beta_1+2}{4}$
is equivalent to $4(\beta_2-\beta_1)+8\sigma\leq 2\beta_1^2$
while $\zeta<\dfrac{\beta_1+2}{3}$ 
is equivalent to $3\beta_2-4\beta_1+6\sigma\leq 2\beta_1^2$.
That is, the condition
$s>\zeta$ is more restrictive than the condition $s\geq s_{A1}$ unless $\beta_2$ is near to $\beta_1$
and $\sigma$ is small enough. 
\end{remark}
\fi

\begin{proof}
	(i) Let us first assume that $V$ satisfies Assumption \ref{A1}. Remember that
	\begin{equation*}
	a_{+}=\dfrac{1}{2},\quad b_+=\begin{cases}
	\dfrac{1-\sigma-2\zeta}{\beta_1+2} \le 0 & \mbox{if } (\beta_1-\beta_2)\sigma=0\\
	\max \left( \dfrac{1}{2}-\dfrac{\sigma}{\beta_1+2},\dfrac{1}{\beta_1+2} \right) - \dfrac{2\zeta}{\beta_1+2} & \mbox{if } (\beta_1-\beta_2)\sigma\neq 0
	\end{cases}.
	\end{equation*}
	In particular,  $\displaystyle \frac{2s}{\beta_1+2}>a_+$ is satisfied if and only if $s>s_{A_1}$.
	We will thus assume that $s>s_{A_1}$ and we only need to prove that
	$\dfrac{2s}{\beta_1+2}>\dfrac{s}{s-\zeta}b_{+}$ as well. As $s>\zeta$, this is equivalent to
	\begin{equation}
		s>\frac{\beta_1+2}{2}b_{+}+\zeta.\label{z.9}
	\end{equation}
	Using again that  $s>\zeta$, this is obviously satisfied if $b_{+}\le 0$, hence we only need to consider the case $b_{+}>0$, \textit{i.e.},
	\[
	b_{+}=\max \left( \frac{1}{2}-\frac{\sigma}{\beta_1+2},\frac{1}{\beta_1+2} \right) -\frac{2\zeta}{\beta_1+2}>0.
	\] 
	As $\zeta\geq\dfrac{1}{2}$, $b_+$ can only be positive when 
	\[
	b_{+}=\frac{1}{2}-\frac{\sigma}{\beta_1+2}-\frac{2\zeta}{\beta_1+2}.
	\] 
	Substitute this into \eqref{z.9} we obtain the equivalent form
	\[
		s> \frac{\beta_1+2}{4}-\frac{\sigma}{2}=s_{A1}-\frac{\sigma}{2}
	\] 
	which is clearly satified under our assumption $s>s_{A1}$.

	\smallskip
	(ii) Let us now assume that $V$ satisfies Assumption \ref{A2} with $3 \beta_2\le 4\beta_1+2$ and $s>s_{A2}$. Remember that
	\begin{equation*}
		a_{+}=\frac{2}{3},\quad b_{+}=\frac{2}{3}-\left(\frac{\sigma}{\beta_1+2}+\frac{\beta_2}{2\beta_1}+\frac{2\zeta}{\beta_1+2}\right)
	\end{equation*}
	Again,  $\displaystyle \frac{2s}{\beta_1+2}>a_+$ is equivalent to $s>s_{A_2}$. We thus assume that $s>s_{A_2}$
	and we only need to prove 
	\eqref{z.9} which, in this case, is equivalent to 
	\[
	s> s_{A_2}-\frac{\beta_1+2}{2}\left(\frac{\sigma}{\beta_1+2}+\frac{\beta_2}{2\beta_1}+\frac{2\zeta}{\beta_1+2}\right)
	\] 
	and is thus clearly satisfied when $s>s_{A2}$.

	\smallskip
	
	Assume that $V$ satisfies Assumption \ref{A2} with  $3\beta_2>4\beta_1+2$.
	Remember that 
	\begin{equation*}
		a_{+}=\dfrac{2}{3}, \quad  b_{+}=\begin{cases}
			\dfrac{1}{\beta_1+2}-\dfrac{\beta_2}{(\beta_1+2)\beta_1}\le 0& \mbox{if } \sigma=0\\
			\max \left( \dfrac{2}{3}-\dfrac{\sigma}{\beta_1+2},\dfrac{1}{\beta_1+2} \right) -\dfrac{2\zeta}{\beta_1+2} &\mbox{if }\sigma\neq 0
		\end{cases}.
	\end{equation*}
	As for the previous case, we have to impose $s>s_{A_2}$ and only need to show that
	$\dfrac{2s}{\beta_1+2}>\dfrac{s}{s-\zeta}b_+$. This is satisfied when $b_+\leq 0$
	so that we only need to consider the case $\sigma>0$
	and
	\[
	b_{+}=\frac{2}{3}-\frac{2\zeta+\sigma}{\beta_1+2}
	\] 
	since $2\zeta\geq 1$.
	Substitute this into \eqref{z.9} we want $s>s_{A2}-(4\zeta+2\sigma)$
	which follows from the assumption $s>s_{A2}$.
\end{proof}

Let us now determine under which conditions 
\begin{equation}
	\sup_{r>0}C_{\mathrm{obs}}(T,s,rV,\omega)<+\infty.\label{z.10}
\end{equation}
By Lemma \ref{lem:prp2.2} (i) it is necessary to ensure
\begin{equation}
	a_{-}\ge 0 \,\,\text{ and }\,\,b_{-}\ge 0.\label{z.11}
\end{equation}

From Proposition \ref{prop:powers}, this only happens in the following cases:

\begin{enumerate}
\renewcommand{\theenumi}{\roman{enumi}}
\item If $V$ satisfies Assumption \ref{A1}, $a_-,b_-\geq0$ is equivalent to
$\zeta:=\frac{\beta_2+2\sigma}{2\beta_1}=\dfrac{1}{2}$ which, as $\beta_2\geq\beta_1$ and $\sigma\geq0$,
reduces to $\beta_2=\beta_1$ and $\sigma=0$.

\item If $V$ satisfies Assumption \ref{A2} and $3\beta_2-4\beta_1-2\leq 0$, then $b_-<0$
unless $\sigma(3\beta_2-4\beta_1-2)=0$ and in this case $b_-=-\left(\dfrac{\sigma}{\beta_1+2}+\dfrac{\beta_2-\beta_1}{\beta_1(\beta_1+2)}\right)<0$ unless $\beta_2=\beta_1$ and $\sigma=0$.
Note that again $\zeta=\dfrac{1}{2}$ in this case.
\end{enumerate}

Note also that if $\beta_1=\beta_2$ then $3\beta_2-4\beta_1-2=-(\beta_1+2)< 0$ so that, if $V$ satisfies Assumption \ref{A2}, $b_-\geq0$ if and only if $\beta_2=\beta_1$ and $\sigma=0$.

Now it remains to make sure that
\begin{equation*}
	\sup_{r\ge 1}C_{\mathrm{obs}}(T,s,rV,\omega)<+\infty.\label{z.12}
\end{equation*}
By Lemma~\ref{lem:cond} it is sufficient to assume that
$s> s_{A1}$ (resp. $s> s_{A2}$) under Assumption~\ref{A1} (resp. Assumption~\ref{A2}). Hence we obtain the following corollary from Lemma~\ref{lem:prp2.2} and Lemma~\ref{lem:cond}.

\begin{corollary}\label{cor:bddA2}
With the notation and conditions of Proposition \ref{prp2.2}, then
$$
\sup_{r>0}C_{\mathrm{obs}}(T,s,rV,\omega)<+\infty
$$
if
\begin{enumerate}
\renewcommand{\theenumi}{\roman{enumi}}
	\item $V$ satisfies Assumption~\ref{A1} with $\beta_1=\beta_2,\sigma=0$ and  $s>s_{A1}$;
	\item $V$ satisfies Assumption~\ref{A2} with $\beta_1=\beta_2,\sigma=0$ and $s>s_{A2}$.
\end{enumerate}
Further if $V$ satisfies Assumption~\ref{A1} (resp.  $V$ satisfies Assumption~\ref{A2})
and the above conditions is satisfied, then set
$s_{A}=s_{A1}$(resp. $s_{A}=s_{A2}$), $\delta=\frac{2(\beta_1+2)}{s-s_A}$, then
\begin{enumerate}
\renewcommand{\theenumi}{\alph{enumi}}
\item if $s>s_{A}$ and $T\leq 1$,
\begin{equation}
\label{eq:b><}
\begin{aligned}
B_-(T,s,rV,\omega):=\sup_{r>0}C_{\mathrm{obs}}(s,rV,\omega)\lesssim\left(\frac{1}{\gamma}\right)^{C_1}\exp&\ent{C_2T^{-\frac{1}{2s-1}}
\log^{-\frac{2s}{2s-1}}\left(\frac{1}{\gamma}\right)}\\
&+\exp\ent{C_3T^{-\delta-\frac{1+\delta}{2s-1}}\log^{-\frac{2s}{2s-1}(1+\delta)}\left(\frac{1}{\gamma}\right)};
\end{aligned}
\end{equation}
\item if $s>s_{A}$  and $T\geq 1$,
\begin{equation}
\label{eq:b>>}
\begin{aligned}
B_+(T,s,rV,\omega):=\sup_{r>0}C_{\mathrm{obs}}(s,rV,\omega)\lesssim\left(\frac{1}{\gamma}\right)^{C_1}\exp&\ent{C_2T^{-\frac{1}{2s-1}}
\log^{-\frac{2s}{2s-1}}\left(\frac{1}{\gamma}\right)}\\
&+\exp\ent{C_3T^{-\delta}\log^{-\frac{2s}{2s-1}(1+\delta)}\left(\frac{1}{\gamma}\right)};
\end{aligned}
\end{equation}
\end{enumerate}

\end{corollary}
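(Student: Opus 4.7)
The proof is essentially a bookkeeping exercise: it combines Lemma \ref{lem:prp2.2} (which bounds the supremum of $C_{\mathrm{obs}}(T,s,rV,\omega)$ separately on $0<r<1$ and $r\geq 1$) with Lemma \ref{lem:cond} (which translates the technical hypothesis $\frac{2s}{\beta_1+2}>\nu$ into the clean form $s>s_A$), together with the analysis of the sign of $a_-,b_-$ that was already carried out in the paragraphs immediately preceding the statement.

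The plan is as follows. First, I observe that under the assumptions $\beta_1=\beta_2$ and $\sigma=0$, the spectral exponent simplifies to $\zeta=\frac{\beta_2+2\sigma}{2\beta_1}=\frac{1}{2}$. Looking at the tables in Proposition \ref{prop:powers}, under Assumption \ref{A1} this gives $a_-=b_-=\frac{1}{2}-\zeta=0$, while under Assumption \ref{A2} (for which automatically $\beta_*=3\beta_2-4\beta_1-2=-(\beta_1+2)<0$ when $\beta_1=\beta_2$) one gets $a_-=\frac{2}{3}-\frac{1}{2}=\frac{1}{6}\geq 0$ and $b_-=0$ since both parenthesised terms collapse to zero. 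The discussion preceding the corollary already recorded that these are the only configurations of the parameters for which $a_-,b_-\geq 0$, so the hypothesis $\beta_1=\beta_2,\sigma=0$ is in fact forced.

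Next, I apply Lemma \ref{lem:prp2.2}(i) with these non-negative exponents to obtain
\[
\sup_{0<r<1}C_{\mathrm{obs}}(T,s,rV,\omega)\leq A_0(T,s,\gamma).
\]
For $r\geq 1$, Lemma \ref{lem:cond} establishes that the hypothesis $s>s_A$ (with $s_A=s_{A1}$ or $s_A=s_{A2}$ depending on the assumption in force) is equivalent to the condition $\frac{2s}{\beta_1+2}>\nu$ required by Lemma \ref{lem:prp2.2}(ii). This yields
\[
\sup_{r\geq 1}C_{\mathrm{obs}}(T,s,rV,\omega)\leq A_1(T,s,\gamma),
\]
with $\delta$ obtained from the formula in Lemma \ref{lem:prp2.2}(ii); specialising to $\zeta=1/2$ one gets $\frac{s}{s-\zeta}=\frac{2s}{2s-1}$ and $\frac{\zeta}{s-\zeta}=\frac{1}{2s-1}$.

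Finally, I add $A_0$ and $A_1$ and split into the two regimes $T\leq 1$ and $T\geq 1$, reading off the two displayed expressions $B_-$ and $B_+$ from the corresponding piecewise definition of $A_1$ in Lemma \ref{lem:prp2.2}(ii). The only genuine obstacle, if any, is a careful check that every occurrence of $\frac{s}{s-\zeta}$ and $\frac{\zeta}{s-\zeta}$ has been correctly rewritten in terms of $s$ alone after setting $\zeta=\frac{1}{2}$, and that the power of $\gamma^{-1}$ coming from $A_0$ can be absorbed into the exponential prefactor $\left(\tfrac{1}{\gamma}\right)^{C_1}$ appearing in the statement; this is routine since $\log(1/\gamma)\geq \log 2$ allows polynomial factors in $\log(1/\gamma)$ to be dominated by the exponentials.
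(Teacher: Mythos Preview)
Your proposal is correct and follows exactly the route the paper itself indicates: the corollary is presented as an immediate consequence of Lemma~\ref{lem:prp2.2} and Lemma~\ref{lem:cond} together with the preceding discussion of when $a_-,b_-\geq 0$, and your write-up simply unpacks that derivation, including the specialisation $\zeta=\tfrac12$ and the resulting simplifications $\tfrac{s}{s-\zeta}=\tfrac{2s}{2s-1}$, $\tfrac{\zeta}{s-\zeta}=\tfrac{1}{2s-1}$.
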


\section{Essential self adjointness of Baouendi-Grushin operators}\label{wellposed}

The results and the proofs in this section are essentially the same as for \cite[Proposition 3.1]{DM}.

\medskip

For $f\in L^1(\R^{n}\times\R^m)$, we write its partial Fourier transform as
$$
\fc_2[f](x,\eta)=f^\eta(x)=(2\pi)^{-m/2}\int_{\R^m}f(x,y)e^{-i\scal{y,\eta}}\,\mbox{d}\eta\qquad \eta\in\R^m.
$$
For $f\in  L^1(\R^{n}\times\R^m)\cap L^2(\R^{n}\times\R^m)$, Parseval's relation
writes $\|\fc_2[f]\|_{L^2(\R^{n+m})}=\|f\|_{L^2(\R^{n+m})}$. We
may thus extend $\fc$ into a unitary transform on $L^2(\R^{n}\times\R^m)$
and in particular $f^\eta$ is well defined for almost every $\eta$ and,
for $u\in L^2(\R^n)$, $v\in L^2(\R^m)$,
$$
\int_{\R^{n+m}}f(x,y)\overline{u(x)v(y)}\,\mbox{d}x\,\mbox{d}y
=\int_{\R^{m}}\int_{\R^n}\fc_2[f](x,\eta)\overline{u(x)}\,\mbox{d}x\,\overline{\widehat{v}(\eta)}\,\mbox{d}\eta
$$
where $\widehat{u}$ (resp. $\widehat{v}$) is the usual Fourier transforms of $u$ in $L^2(\R^n)$, resp. of $v$
in $L^2(\R^m)$.

We will use the same notation for $f\in L^1(\R^{n}\times\T^m)$, and its partial Fourier coefficient as
$$
\fc_2^p[f](x,k)=f^k(x)=(2\pi)^{-m/2}\int_{\T^m}f(x,y)e^{-i\scal{y,k}}\,\mbox{d}\eta\qquad k\in\Z^m.
$$
For $f\in  L^1(\R^{n}\times\T^m)\cap L^2(\R^{n}\times\T^m)$, Parseval's relation
writes $\|\fc_2[f]\|_{L^2(\R^{n},\ell^2(\Z^m)}=\|f\|_{L^2(\R^{n}\times\T^m)}$. We
may thus extend $\fc$ into a unitary transform from $L^2(\R^{n}\times\T^m)$
to $L^2(\R^{n},\ell^2(\Z^m)$.
In particular,
for $u\in L^2(\R^n)$, $v\in L^2(\T^m)$,
$$
\int_{\R^{n}\times\T^m}f(x,y)\overline{u(x)v(y)}\,\mbox{d}x\,\mbox{d}y
=\sum_{k\in \Z^{m}}\int_{\R^n}\fc_2[f](x,k)\overline{u(x)}\,\mbox{d}x\,\overline{c_k(v)}
$$
where $\big(c_k(v)\big)_{k\in\Z^m}$ are the usual Fourier coefficients of $v$.

\begin{proposition}
Let $V\in L^1_{\mathrm{loc}}(\R^n)$ and assume that $V$ satisfies assumption \ref{A2}
Let $\mathcal{L}_V=-\Delta_x-V(x)\Delta_y$, then $\mathcal{L}$ is essentially self-adjoint on $L^2(\R^{n}\times\R^m)$
as well as on $L^2(\R^{n}\times\T^m)$.
\end{proposition}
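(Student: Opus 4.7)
The plan is to diagonalize $\mathcal{L}_V$ along the $y$-variable via the partial Fourier transform (resp. Fourier series in the periodic case) and reduce essential self-adjointness to the fiberwise essential self-adjointness of the Schr\"odinger operators $H_{rV} = -\Delta_x + rV(x)$ on $L^2(\R^n)$ for the parameters $r = |\eta|^2$ with $\eta \in \R^m$ (resp.\ $r = |k|^2$ with $k \in \Z^m$). First, integration by parts on $C_c^\infty(\R^n \times \R^m)$ shows that $\mathcal{L}_V$ is symmetric and non-negative, since $V \geq c_1|x|^{\beta_1} \geq 0$ by Assumption \ref{A2}, and
$$
\langle \mathcal{L}_V f, f\rangle = \int |\nabla_x f|^2 \,\mathrm{d}x\,\mathrm{d}y + \int V(x)|\nabla_y f|^2 \,\mathrm{d}x\,\mathrm{d}y \ge 0.
$$
So $\mathcal{L}_V$ is essentially self-adjoint on $C_c^\infty$ if and only if $\ker(\mathcal{L}_V^* + I) = \{0\}$.

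Next, the key step is to transfer the eigenequation $(\mathcal{L}_V^* + I)g = 0$ to the fibers. For $g \in L^2(\R^n \times \R^m)$ with $\mathcal{L}_V^* g = -g$ in the distributional sense, apply $\mathcal{F}_2$ in $y$: since $\mathcal{F}_2(-V(x)\Delta_y f)(x,\eta) = |\eta|^2 V(x)\mathcal{F}_2 f(x,\eta)$ and $\mathcal{F}_2(-\Delta_x f)(x,\eta) = -\Delta_x \mathcal{F}_2 f(x,\eta)$, we get, using Fubini on the pairing $\langle g, \mathcal{L}_V \varphi + \varphi\rangle = 0$ with test functions $\varphi = u(x)v(y)$, $u \in C_c^\infty(\R^n)$, $v \in C_c^\infty(\R^m)$, that for a.e.\ $\eta \in \R^m$, the section $g^\eta \in L^2(\R^n)$ satisfies
$$
(H_{|\eta|^2 V} + I)^* g^\eta = 0
$$
in the distributional sense.

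The third step is to invoke fiberwise essential self-adjointness: for every fixed $r \geq 0$, the potential $rV \in L^\infty_{\mathrm{loc}}(\R^n)$ is non-negative and bounded below, so by the classical Kato-Sears theorem (\emph{see} Reed-Simon vol.~II, Thm.~X.28) the operator $H_{rV} = -\Delta_x + rV$ is essentially self-adjoint on $C_c^\infty(\R^n)$. Consequently $\ker(H_{rV}^* + I) = \{0\}$ for every $r \geq 0$, whence $g^\eta = 0$ for a.e.\ $\eta$, and Parseval's identity gives $g = 0$. This proves essential self-adjointness on $L^2(\R^n \times \R^m)$.

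The periodic case on $L^2(\R^n \times \T^m)$ is handled identically, replacing the partial Fourier transform $\mathcal{F}_2$ by the partial Fourier series $\mathcal{F}_2^p$ and the a.e.-fiber statement by a pointwise statement for every $k \in \Z^m$; the unitary isomorphism $L^2(\R^n \times \T^m) \cong \bigoplus_{k\in\Z^m} L^2(\R^n)$ then expresses the closure of $\mathcal{L}_V$ as a direct sum of the essentially self-adjoint operators $H_{|k|^2 V}$. The main technical subtlety, as in \cite{DM}, is ensuring that the distributional eigenequation for $g$ genuinely descends to a fiberwise distributional eigenequation for $g^\eta$ (resp.\ $g^k$); this is routine using tensor-product test functions and Fubini, but it is the only point requiring a bit of care, since aside from it everything reduces to quoting the scalar essential self-adjointness result for $H_{rV}$.
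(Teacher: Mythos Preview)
Your proof is correct and follows essentially the same approach as the paper: reduce essential self-adjointness to the deficiency-index criterion, decouple the $y$-variable via the partial Fourier transform (resp.\ Fourier series), use tensor-product test functions and Fubini to obtain a fiberwise distributional eigenequation, and then invoke the essential self-adjointness of the scalar Schr\"odinger operators $H_{rV}$ on $L^2(\R^n)$. The only cosmetic difference is that you exploit non-negativity to check a single real deficiency point $\lambda=-1$, whereas the paper checks all $\lambda\in\C\setminus\R$; you also make explicit the reference (Reed--Simon X.28) for the fiberwise step that the paper leaves implicit.
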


\begin{proof}
Let $\mathcal{L}^*$ be the Hilbert adjoint of $\mathcal{L}$ on $L^2(\R^{m+n})$.
According to \cite[Corollary of Theorem VIII.3]{RS} it is enough to show that, for every $\lambda\in\C\setminus\R$,
$\mathcal{L}^*-\lambda$ is one-to-one. We thus want to show that, if
$f\in L^2(\R^{n+m})$ is such that, for every $\varphi\in\mathcal{C}^\infty_c(\R^{n+m})$,
$$
\int_{\R^{n+m}}f(x,y)\overline{\bigl(-\Delta_x-V(x)\Delta_y-\lambda\bigr)}\varphi(x,y)\,\mbox{d}x\,\mbox{d}y=0
$$
then $f=0$.

Taking $\varphi(x,y)=u(x)v(y)$ with $u,v$ smooth and compactly supported, we assume that
\begin{eqnarray*}
0&=&\int_{\R^{m}}\int_{\R^n}f(x,y)\overline{\bigl(-\Delta u(x)-\lambda u(x)\bigr)}\,\mbox{d}x\,\overline{v(y)}\,\mbox{d}y
-\int_{\R^{m}}\int_{\R^n}f(x,y)V(x)u(x)\,\mbox{d}x\,\overline{\Delta v(y)}\,\mbox{d}y\\
&=&\int_{\R^{m}}\int_{\R^n} \fc_2[f](x,\eta)\overline{\bigl(-\Delta +|\eta|^2V(x)-\lambda\bigr) u(x)}\mbox{d}x\,
\overline{\widehat{v}(\eta)}\,\mbox{d}\eta.
\end{eqnarray*}
This implies that, for every $u\in\mathcal{C}_c(\R^n)$,
$$
\int_{\R^n} \fc_2[f](x,\eta)\overline{\bigl(-\Delta +|\eta|^2V(x)-\lambda\bigr) u(x)}\mbox{d}x=0
$$
for almost every $\eta$. Consider the Schr\"odinger operator $H_\eta=-\Delta +|\eta|^2V(x)$
which, under assumption \ref{A2} is essentially self-adjoint so that, if $g\in L^2(\R^n)$,
then
$$
\int_{\R^n} g(x)\overline{\bigl(H_\eta-\lambda\bigr) u(x)}\mbox{d}x=0
$$
for every $u\in\mathcal{C}^\infty_c(\R^n)$ implies that $g=0$ a.e.
We conclude that $\fc_2[f](x,\eta)=0$ for almost every $x$ and almost every $\eta$ so that $f=0$ a.e.

On $L^2(\R^n\times\T^m)$, we replace partial Fourier transform with partial Fourier coefficients.
\end{proof}

\section{Proof of the exact observability inequalities}\label{sec2d}
\label{secfinalproof}

We are now ready to prove the main theorem.
Let us recall what we want to prove in the synthetic table, Table \ref{table.1}.

\begin{table}[h!]
	\centering
\begin{tabular}{|c|c|c|}
	\hline
	& Assumption~\ref{A2} & Assumption~\ref{A1}\\
	\hline
	\multirow{2}{7em}{$s> (\beta_1+2)/3 $} & $s>(\beta +2) /3$, exactly null-controllable &  \\
						    & for any $T>0$ & exactly null-controllable \\
	\cline{1-2}
	\multirow{2}{7em}{$s> (\beta_1+2)/4 $}					 &  & \\
						 & $s\le (\beta_1+2)/3$  & \\
						 \cline{1-1} \cline{3-3}
	\multirow{2}{7em}{$ s\le(\beta _1+2) /4 $} & not known under Assumption~\ref{A2} & not known under Assumption~\ref{A1} \\
						&  {\it see} Remark~\ref{r1h} for the standard case & {\it see} Remark~\ref{r1h} for the standard case \\
	\hline
\end{tabular}

\smallskip

\caption{Exactly null-controllability results from equidistributed sets. For the case $y\in\R^m$, we need to set $\beta_1=\beta_2$, while for the case $y\in\T^m$ we do not need this condition.}
\label{table.1}
\end{table}

As already mentionned, it is enough to prove the observability properties.
Let us start with the non-periodic case. The observability equation corresponding to
\eqref{egb} is the following:

\begin{theorem} Let $T>0$, $\beta_1=\beta_2>0$, and $\gamma>0$.
	Let $V\in L^1_{\mathrm{loc}}(\R^n)$ that satisfies Assumption~\ref{A1} (resp. Assumption~\ref{A2}) and let $\lc_V=-\Delta_x-V(x)\Delta_y$
be the corresponding Baouendi-Grushin operator on $\R^n\times\R^m$. Let $\omega\subset\R^n$
be a $\gamma$-equidistributed set. If $s>s_{A1}$ (resp. $s>s_{A2}$), then the there exists a constant $C_{\mathrm{obs}}^{\lc}(T,s,V,\omega)$
such that the inequality
$$
\norm{e^{-T\lc_V^s}f}_{L^2(\R^n\times\R^m)}
\leq C_{\mathrm{obs}}^{\lc}(T,s,V,\omega)\int_0^T\norm{e^{-t\lc_V^s}f}_{L^2(\omega\times\R^m)}\,\mathrm{d}t
$$
holds for every $f\in L^2(\R^n\times\R^m)$.
\end{theorem}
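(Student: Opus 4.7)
The strategy is to diagonalize $\lc_V$ via partial Fourier transform in the $y$-variable, reducing the problem to a family of scaled Schrödinger observability estimates on $\R^n$ to which Proposition \ref{prp2.2} and Corollary \ref{cor:bddA2} apply. Concretely, writing $f^\eta = \fc_2[f](\cdot,\eta)$, a direct computation gives $\fc_2[\lc_V f](x,\eta) = H_{|\eta|^2 V} f^\eta(x)$, where $H_{|\eta|^2 V} = -\Delta_x + |\eta|^2 V(x)$. By the functional calculus this intertwines the semigroup: $\fc_2[e^{-t\lc_V^s}f](x,\eta) = e^{-tH_{|\eta|^2V}^s} f^\eta(x)$. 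Plancherel then yields
\begin{equation*}
\norm{e^{-T\lc_V^s}f}_{L^2(\R^{n+m})}^2 = \int_{\R^m}\norm{e^{-TH_{|\eta|^2V}^s}f^\eta}_{L^2(\R^n)}^2\,\mathrm{d}\eta.
\end{equation*}

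Next, for each fixed $\eta \in \R^m$ I apply Proposition~\ref{prp2.2} with the scaling parameter $r = |\eta|^2$. Since $\omega$ is $\gamma$-equidistributed, it is $(\sqrt{\gamma},0)$-distributed, so $\sigma = 0$ and $\zeta = \beta_2/(2\beta_1) = 1/2$ under the hypothesis $\beta_1 = \beta_2$. This gives
\begin{equation*}
\norm{e^{-TH_{|\eta|^2V}^s}f^\eta}_{L^2(\R^n)}^2 \leq \frac{C_{\mathrm{obs}}(T,s,|\eta|^2 V,\omega)}{T}\int_0^T \norm{e^{-tH_{|\eta|^2V}^s}f^\eta}_{L^2(\omega)}^2\,\mathrm{d}t.
\end{equation*}
Now I invoke Corollary~\ref{cor:bddA2}: under the present assumptions ($\beta_1 = \beta_2$, $\sigma = 0$, $s > s_{A1}$ or $s_{A2}$), the constant $C_{\mathrm{obs}}(T,s,rV,\omega)$ is bounded \emph{uniformly} in $r > 0$ by some $C_{\mathrm{obs}}^{\lc}(T,s,V,\omega)/T$, with explicit dependence on $T$ and $\gamma$ given by \eqref{eq:b><}--\eqref{eq:b>>}.

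Integrating the pointwise inequality in $\eta$, applying the uniform bound in $r = |\eta|^2$, and then reapplying Plancherel (this time to the restriction to $\omega \times \R^m$, which is legitimate since partial Fourier transform is an isometry fiberwise in $\eta$ for fixed $x$), I obtain
\begin{equation*}
\int_{\R^m}\int_0^T\norm{e^{-tH_{|\eta|^2V}^s}f^\eta}_{L^2(\omega)}^2\,\mathrm{d}t\,\mathrm{d}\eta = \int_0^T \norm{e^{-t\lc_V^s}f}_{L^2(\omega\times\R^m)}^2\,\mathrm{d}t,
\end{equation*}
which closes the proof with $C_{\mathrm{obs}}^{\lc}(T,s,V,\omega) := \sup_{r > 0} C_{\mathrm{obs}}(T,s,rV,\omega)$.

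The main obstacle is precisely the \emph{uniformity in $\eta$}, particularly the behavior as $|\eta| \to 0$ (small $r$) and as $|\eta| \to \infty$ (large $r$). For large $r$, the coercivity of the semigroup from Corollary~\ref{cor:sem} (through the $e^{-C_3 T r^{2s/(\beta_1+2)}}$ factor) dominates the growth of the spectral-inequality constant, provided $s > s_A$: this is where the critical exponents $s_{A1}, s_{A2}$ enter, via Lemma~\ref{lem:cond}. For small $r$, the singularity of the spectral-inequality constant in $r$ is controlled only when $a_- \geq 0$ and $b_- \geq 0$, which, according to Proposition~\ref{prop:powers}, forces $\beta_1 = \beta_2$ and $\sigma = 0$. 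This explains both restrictions in the statement. All other steps (partial Fourier intertwining, Plancherel on the control set, exchange of integrals) are routine.
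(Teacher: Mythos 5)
Your proposal is correct and follows essentially the same route as the paper: partial Fourier transform in $y$ to reduce to the scaled Schr\"odinger operators $H_{|\eta|^2V}$, fiberwise application of Proposition~\ref{prp2.2} with $r=|\eta|^2$, the uniform-in-$r$ bound on $C_{\mathrm{obs}}(T,s,rV,\omega)$ from Lemma~\ref{lem:prp2.2}, Lemma~\ref{lem:cond} and Corollary~\ref{cor:bddA2} (which is exactly where $\beta_1=\beta_2$, $\sigma=0$ and $s>s_{A1}$ resp.\ $s>s_{A2}$ are needed), and Parseval to return to $\omega\times\R^m$. Your closing discussion of the small-$r$ and large-$r$ regimes matches the paper's own justification, so there is nothing to correct.
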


\begin{proof}
We have, using Fubini, Parseval (for $\fc_2$) and Proposition \ref{prp2.2}
\begin{eqnarray*}
\norm{e^{-T\lc_V^s}u}_{L^2(\R^n\times\R^m)}^2&=&
\norm{\fc_2[e^{-T\lc_V^s}u]}_{L^2(\R^n\times\R^m)}^2
=\int_{\R^m}\int_{\R^n}\abs{e^{-TH_{|\eta|^2V}^s}\fc_2[u](x,\eta)}^2\,\mbox{d}x\,\mbox{d}\eta\\
&\leq&\int_{\R^m}\frac{C_{\mathrm{obs}}(T,s,|\eta|^2V,\omega)}{T}
\int_0^T\int_{\omega}\abs{e^{-tH_{|\eta|^2V}^s}\fc_2[u](x,\eta)}^2\,\mbox{d}x\,\mbox{d}\eta\,\mbox{d}t\\
&\leq&\frac{\sup_{r>0}C_{\mathrm{obs}}(T,s,rV,\omega)}{T}\int_0^T\int_{\omega}
\int_{\R^m}\abs{e^{-tH_{|\eta|^2V}^s}\fc_2[u](x,\eta)}^2\,\mbox{d}x\,\mbox{d}\eta\,\mbox{d}t\\
&=&\frac{\sup_{r>0}C_{\mathrm{obs}}(T,s,rV,\omega)}{T}\int_0^T\int_{\omega}
\int_{\R^m}\abs{e^{-t\lc_V^s}u(x,y)}^2\,\mbox{d}x\,\mbox{d}y\,\mbox{d}t
\end{eqnarray*}
provided
$$
C_{\mathrm{obs}}^{\lc}(T,s,V,\omega):=\frac{1}{T}\sup_{r>0}C_{\mathrm{obs}}(T,s,rV,\omega)
<+\infty.
$$
It remains to use Proposition \ref{prp2.2} and Lemma \ref{lem:prp2.2}
to conclude that the observability inequality holds in the listed case.
\end{proof}

\begin{remark}
The proof actually also provides an estimate of the observability constants, namely, when $s>s_{A1}$ (resp. $s>s_{A2}$) and $C_{\mathrm{obs}}^{\lc}(T,s,V,\omega)\lesssim\dfrac{B_-(T,s,\omega,V)}{T}$
when $T<1$ and $C(T,s,\omega,V)\lesssim\dfrac{B_+(T,s,\omega,V)}{T}$ when $T\geq 1$ where
$B_-$ and $B_+$ are
defined in \eqref{eq:b><}-\eqref{eq:b>>};
\end{remark}

Next, we show that adding a zero-order term to $\lc_V$ allows to obtain observability 
corresponding to \eqref{egbs} from smaller sets:

\begin{proposition} Let $T>0$, $\sigma\geq0$, and $\gamma>0$.
	Let $V,\tilde V\in L^1_{\mathrm{loc}}(\R^n)$ that satisfy Assumption \ref{A2} (resp. Assumption~\ref{A1})
with same parameters $\beta_1,\beta_2$ and let $\lc_{V,\tilde V}=-\Delta_x-V(x)\Delta_y+\tilde V$
be the corresponding Baouendi-Grushin-Schr\"odinger operator on $\R^n\times\R^m$. Let $\omega\subset\R^n$
be a $(\gamma,\sigma)$-distributed set. Assume further that $s,\beta_1,\beta_2,\sigma,T$ satisfy (ii) or (iii) with in Lemma~\ref{lem:prp2.2}. 

Then the there exists a constant $C_{\mathrm{obs}}^{\lc}(T,s,V,\tilde V,\omega)$
such that for every $f\in L^2(\R^n\times\R^m)$
$$
\norm{e^{-T\lc_{V,\tilde V}^s}f}_{L^2(\R^n\times\R^m)}
\leq C_{\mathrm{obs}}^{\lc}(T,s,V,\tilde V,\omega)
\int_0^T\norm{e^{-t\lc_{V,\tilde V}^s}f}_{L^2(\omega\times\R^m)}\,\mathrm{d}t.
$$
\end{proposition}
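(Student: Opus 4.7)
The plan is to mimic the proof of the previous theorem while exploiting the extra potential $\tilde V$ to handle small values of $|\eta|^2$. Taking the partial Fourier transform $\fc_2$ in the $y$-variable diagonalises $-\Delta_y$ into multiplication by $|\eta|^2$, so that for almost every $\eta\in\R^m$, $\fc_2[\lc_{V,\tilde V}u](\cdot,\eta)=H_{|\eta|^2V+\tilde V}\fc_2[u](\cdot,\eta)$, where $H_W=-\Delta_x+W(x)$. Since the spectral projector of $\lc_{V,\tilde V}^s$ commutes with $\fc_2$, Parseval in $y$ reduces the observability of $\lc_{V,\tilde V}^s$ to a fibered observability estimate for the family of scaled Schrödinger operators $H_{rV+\tilde V}^s$ with parameter $r=|\eta|^2\geq 0$.

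Concretely, with $u\in L^2(\R^n\times\R^m)$ and $u^\eta=\fc_2[u](\cdot,\eta)$, Fubini and Parseval yield
\begin{equation*}
\norm{e^{-T\lc_{V,\tilde V}^s}u}_{L^2(\R^n\times\R^m)}^2
=\int_{\R^m}\norm{e^{-TH_{|\eta|^2V+\tilde V}^s}u^\eta}_{L^2(\R^n)}^2\,\mathrm{d}\eta.
\end{equation*}
For each fixed $\eta$, the potential $W_\eta:=|\eta|^2V+\tilde V$ satisfies the same Assumption (\ref{A1} or \ref{A2}) as $V$ and $\tilde V$, with $\beta_1,\beta_2$ unchanged; hence Proposition~\ref{prp2.2} applies to $H_{W_\eta}^s$, giving an observability constant $C_{\mathrm{obs}}(T,s,W_\eta,\omega)$ on the set $\omega$. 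Summing these fibered inequalities, again by Fubini, produces
\begin{equation*}
\norm{e^{-T\lc_{V,\tilde V}^s}u}_{L^2(\R^n\times\R^m)}^2
\leq\frac{\sup_{r\geq 0}C_{\mathrm{obs}}(T,s,rV+\tilde V,\omega)}{T}\int_0^T\norm{e^{-t\lc_{V,\tilde V}^s}u}_{L^2(\omega\times\R^m)}^2\,\mathrm{d}t,
\end{equation*}
so the only thing left to check is that $\sup_{r\geq 0}C_{\mathrm{obs}}(T,s,rV+\tilde V,\omega)<+\infty$.

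At this point the role of $\tilde V$ becomes crucial, and the main obstacle of the previous theorem, namely the degeneracy at $r=0$, disappears. Invoking Remark~\ref{rem:vtildev}, we reduce the question to the bound of $\sup_{r\geq 1}C_{\mathrm{obs}}(T,s,rV,\omega)$, which is exactly what Lemma~\ref{lem:prp2.2}(ii) controls under the condition $\dfrac{2s}{\beta_1+2}>\nu=\max\bigl(a_+,\tfrac{s}{s-\zeta}b_+\bigr)$. This is precisely the hypothesis (ii) or (iii) of Lemma~\ref{lem:prp2.2} assumed in the statement, so the supremum is finite and we obtain the observability inequality with the explicit constant
\begin{equation*}
C_{\mathrm{obs}}^{\lc}(T,s,V,\tilde V,\omega)\lesssim \frac{A_1(T,s,\gamma)}{T},
\end{equation*}
where $A_1$ is the quantitative upper bound supplied by Lemma~\ref{lem:prp2.2}(ii). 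The key conceptual point is that, unlike in Theorem~\ref{th:main}, we no longer need to impose $\beta_1=\beta_2$ and $\sigma=0$ to control the small-$r$ regime, because $\tilde V$ provides uniform polynomial coercivity even when the $y$-frequency vanishes; this is what allows $\omega$ to be merely $(\gamma,\sigma)$-distributed rather than equidistributed.
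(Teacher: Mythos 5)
Your proposal is correct and follows essentially the same route as the paper: partial Fourier transform in $y$ plus Parseval to reduce to the fibered Schr\"odinger operators $H_{|\eta|^2V+\tilde V}^s$, absorption of $\tilde V$ via Remark~\ref{rem:vtildev} (so that only the regime $r\geq 1$ of Lemma~\ref{lem:prp2.2} matters), and conclusion from Lemma~\ref{lem:prp2.2}(ii) together with Lemma~\ref{lem:cond}. The quantitative bound $C_{\mathrm{obs}}^{\lc}\lesssim A_1(T,s,\gamma)/T$ and the observation that the degeneracy at $\eta=0$ disappears are exactly the points the paper makes.
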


\begin{proof}
We have, using Fubini, Parseval (for $\fc_2$) and Proposition \ref{prp2.2}
\begin{eqnarray*}
\norm{e^{-T\lc_{V,V_0}^s}u}_{L^2(\R^n\times\R^m)}^2&=&
\norm{\fc_2[e^{-T\lc_{V,V_0}^s}u]}_{L^2(\R^n\times\R^m)}^2
=\int_{\R^m}\int_{\R^n}\abs{e^{-TH_{|\eta|^2V+V_0}^s}\fc_2[u](x,\eta)}^2\,\mbox{d}x\,\mbox{d}\eta\\
&\leq&\int_{\R^m}\frac{C_{\mathrm{obs}}(T,s,(1+|\eta|^2)V,\omega)}{T}
\int_0^T\int_{\omega}\abs{e^{-tH_{|\eta|^2V+V_0}^s}\fc_2[u](x,\eta)}^2\,\mbox{d}x\,\mbox{d}\eta\,\mbox{d}t.
\end{eqnarray*}
Here we use Remark \ref{rem:vtildev} to absorb $\tilde V$ into $V$. We thus get
$$
\norm{e^{-T\lc_{V,V_0}^s}u}_{L^2(\R^n\times\R^m)}^2
\leq
\frac{\sup_{r\geq 1}C_{\mathrm{obs}}(T,s,rV,\omega)}{T}\int_0^T\int_{\omega}
\int_{\R^m}\abs{e^{-t\lc_{V,V_0}^s}u(x,y)}^2\,\mbox{d}x\,\mbox{d}y\,\mbox{d}t
$$
provided
$$
C_{\mathrm{obs}}^{\lc}(T,s,V,\tilde V,\omega):=\frac{1}{T}\sup_{r>1}C_{\mathrm{obs}}(T,s,rV,\omega)<+\infty.
$$

It remains to use Lemmas \ref{lem:prp2.2} and \ref{lem:cond} to conclude.
\end{proof}

% Periodic case
We finally treat the semi-periodic case:

\begin{theorem}
	Let $T>0, \beta_2\ge \beta_1>0$, and $\gamma>0$. Let $V\in L^{1}_{\mathrm{loc}}(\R^{n})$ that satisfies Assumption~\ref{A1} (resp. Assumption~\ref{A2}) and let $\mathcal{L}_{V}=-\Delta_x-V(x)\Delta_y$ be the corresponding Baouendi-Grushin operator on $\R^n\times\T^m$. Let $\omega \subset \R^{n}$ be a $\gamma$-equidistributed set. If $s>s_{A1}$ (resp. $s>s_{A2}$), then there exists a constant $C_{\mathrm{obs}}^{\mathcal{L}}(T,s,V,\omega)$ such that the inequality
$$
\norm{e^{-T\lc_V^s}f}_{L^2(\R^n\times\T^m)}
\leq C_{\mathrm{obs}}^{\lc}(T,s,V,\omega)\int_0^T\norm{e^{-t\lc_V^s}f}_{L^2(\omega\times\T^m)}\,\mathrm{d}t
$$
holds for every $f \in L^2(\R^{n}\times \T^{m})$.
\end{theorem}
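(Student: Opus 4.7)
The plan is to mimic the proof of the non-periodic analogue, replacing the partial Fourier transform $\fc_2$ by partial Fourier series $\fc_2^p$ and exploiting the discreteness of frequencies. Writing $u^k(x)=\fc_2^p[u](x,k)$ for $k\in\Z^m$, the operator $\lc_V$ acts on each Fourier mode as the scaled Schrödinger operator $H_{|k|^2V}=-\Delta_x+|k|^2V(x)$, and Parseval gives
$$
\norm{e^{-T\lc_V^s}u}_{L^2(\R^n\times\T^m)}^2=\sum_{k\in\Z^m}\norm{e^{-TH_{|k|^2V}^s}u^k}_{L^2(\R^n)}^2.
$$
The key new feature compared with the non-periodic case is that $|k|^2$ takes values only in $\{0\}\cup[1,+\infty)$, so no scaling parameter $r=|k|^2$ ever approaches $0$. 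This is precisely why we may dispense with the restriction $\beta_1=\beta_2$; the price to pay is that the zero mode must be handled separately, since Proposition \ref{prp2.2} relies on a pointwise lower bound of the form $V\geq c|x|^\beta$ which collapses when multiplied by $0$.

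For the nonzero modes $k\neq 0$, I would apply Proposition \ref{prp2.2} with $r=|k|^2\geq 1$. Under the hypothesis $s>s_{A1}$ (respectively $s>s_{A2}$), Lemma \ref{lem:cond} ensures $\dfrac{2s}{\beta_1+2}>\nu$, so Lemma \ref{lem:prp2.2}(ii) provides
$$
K:=\sup_{r\geq 1}C_{\mathrm{obs}}(T,s,rV,\omega)<+\infty,
$$
which then majorises the observability constant of every nonzero Fourier mode simultaneously.

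For the zero mode $k=0$ one exits the Schrödinger framework of Sections~\ref{sec3d}--\ref{sec4d} and argues as follows. Since $\omega$ is $\gamma$-equidistributed, it is in particular $\gamma'$-thick at some scale, hence Kovrijkine's sharp Logvinenko--Sereda inequality yields the spectral inequality
$$
\norm{\phi}_{L^2(\R^n)}^2\leq\kappa_0 e^{\kappa_1\lambda^{1/2}}\norm{\phi}_{L^2(\omega)}^2,\qquad \phi\in\mathcal{E}_{-\Delta}(\lambda).
$$
Since $s>s_{A1}=\dfrac{\beta_1+2}{4}>\dfrac{1}{2}$ (respectively $s>s_{A2}>\dfrac{1}{2}$) the exponent $\zeta=1/2$ satisfies $\zeta<s$, and Theorem \ref{thm2.1d} combined with \eqref{1f} produces a finite observability constant $K_0=K_0(T,s,\omega)$ for $(-\Delta_x)^s$ from $\omega$. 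Alternatively one could invoke Theorem \ref{th:WWZZEV} directly.

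It only remains to set $C^{\lc}_{\mathrm{obs}}(T,s,V,\omega):=\max(K,K_0)/T$, apply the two mode-wise bounds, and sum over $k\in\Z^m$, invoking Fubini together with Parseval in reverse to recover $\norm{\cdot}_{L^2(\omega\times\T^m)}$ on the right-hand side. The only genuinely new difficulty compared to the $\R^m$-case is the zero mode, which sits outside the machinery developed in the paper; everything else is a direct transcription of the non-periodic proof with sums over $k\in\Z^m$ replacing integrals over $\eta\in\R^m$.
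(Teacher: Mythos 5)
Your proof follows essentially the same route as the paper's: partial Fourier series reduce $\lc_V^s$ to the scaled Schr\"odinger operators $H_{|k|^2V}^s$, the modes $k\neq 0$ are handled uniformly by Proposition~\ref{prp2.2} together with Lemmas~\ref{lem:prp2.2} and~\ref{lem:cond} (using $|k|^2\geq 1$), and the zero mode is treated separately via the thickness of the $\gamma$-equidistributed set $\omega$. Your handling of the zero mode through Kovrijkine's spectral inequality and Theorem~\ref{thm2.1d} (valid for fractional $s>\tfrac12$) is, if anything, slightly more careful than the paper's direct appeal to Theorem~\ref{th:WWZZEV}, which is stated for $s=1$.
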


\begin{proof}
	We have, using Fubini and Parseval (for $\mathcal{F}_2^p$)
	\begin{equation*}
			\|e^{-T \mathcal{L}_{V}^{s}}u\|^2_{L^2(\R^{n}\times \T^{m})}
		= \|\mathcal{F}_2^p[ e^{-T \mathcal{L}^{s}_{V}}u]\|^2_{L^2(\R^{n}\times \T^{m})}
		=\sum_{k \in \Z^{m}}\int_{\R^{n}}\left| e^{-TH^{s}_{|k|^2 V}}\mathcal{F}_2^p[u](x,k) \right| ^2 \d x.
	\end{equation*}

For $k=0$, we use that a $\gamma$-equidistributed set is also $\gamma$-thick. Then, as 
$H_{0V}=\Delta$ we can appeal to Theorem~\ref{th:WWZZEV} to bound
$$
\int_{\R^{n}}\left| e^{-TH^{s}_{0V}}\mathcal{F}_2^p[u](x,k) \right| ^2 \d x
\leq C\int_0^T\int_{\omega}\left| e^{-TH^{s}_{0V}}\mathcal{F}_2^p[u](x,k) \right| ^2 \d x\d t.
$$
	
For  $k\neq 0$, we have $|k|\ge 1$. Then we apply Lemma~\ref{lem:prp2.2}, Lemma~\ref{lem:cond} and Proposition~\ref{prp2.2} to obtain
$$
\sum_{k \in \Z^{m}\setminus\{0\}}\int_{\R^{n}}\left| e^{-TH^{s}_{|k|^2 V}}\mathcal{F}_2^p[u](x,k) \right| ^2 \d x
\leq C\sum_{k \in \Z^{m}\setminus\{0\}}\int_0^T\int_{\omega}\left| e^{-TH^{s}_{|k|^2 V}}\mathcal{F}_2^p[u](x,k) \right| ^2 
\d x\d t.
$$
Grouping both estimates and applying Parseval's relation gives us the desired observability inequality. 
\end{proof}

Note that the $k=0$ case in the above proof requires $\omega$ to be $\gamma$-thick so that, for this
proof to work for $(\gamma,\sigma)$-distributed sets, one needs $\sigma=0$.

\section{Data availability}
No data has been generated or analysed during this study.

\section{Funding and/or Conflicts of interests/Competing interests}

The authors have no relevant financial or non-financial interests to disclose.

Ce travail a bénéficié d'une aide de l'\'Etat attribu\'e \`a l'Universit\'e de Bordeaux en
tant qu'Initiative d'excellence, au titre du plan France 2030.

\bibliographystyle{alpha}

\end{document}